\documentclass{aptpub}

\newtheorem{hyp}[thm]{Hypothesis}
\newtheorem{pro}[thm]{Proposition}

\newcommand{\HH}{\mathbb{H}}
\newcommand{\FF}{\mathbb{F}}
\newcommand{\F}{\mathcal{F}}

\newcommand{\DD}{\mathbb D}
\newcommand{\LL}{\mathcal{L}}
\newcommand{\II}{\mathcal{I}}

\newcommand{\h}{{\cal H}}

\newcommand{\demi}{\frac{1}{2}}

\newcommand{\1}{{\mathbf 1}}

\newcommand{\biindice}[3]

\shorttitle{PDE for  the joint law of the pair of a continuous diffusion and its running maximum}


\begin{document}

\title{PDE for  the joint law of the pair of a continuous diffusion and its running maximum}

\authorone[IMT]{Laure Coutin, Monique Pontier}

\addressone{IMT, Universit\'e Paul Sabatier, 31062 Toulouse cedex France}  

\footnote {coutin@math.univ-toulouse.fr, IMT.} 

\footnote {pontier@math.univ-toulouse.fr, IMT.}


\begin{abstract}
Let $X$ be a $d$-dimensional diffusion   and $M$ the  running supremum of its first component.
 In this paper,  { we show} that for any $t>0,$
  the  density  (with respect to  the $d+1$-dimensional Lebesgue measure) of  the pair 
  $(M_t,X_t)$
is a weak solution of a Fokker-Planck partial differential equation on the closed set $\{(m,x)\in \mathbb{R}^{d+1},~{ m\geq x^1}\},$ {using an integral expansion of this density.}
\end{abstract}

\begin{keywords}{ Diffusion process, partial differential equation, running supremum process, joint law.}
\end{keywords}

\ams{60J60}{60G70,60H10}

\section{Introduction}

{The goal of this  paper   is to study the law of the   pair  $(M_t, X_t)$ where $X$ is a $d$-dimensional diffusion
and $M$ is the running maximum of the first component.
In a previous work \cite{LaureMo}, using Malliavin calculus 
  and specifically Nualart's seminal book \cite{nualart}, we  have proved that, for any $t> 0$ the law of  {$V_t:=(M_t,X_t)$}  is absolutely continuous with respect to the  Lebesgue
measure with density $p_V(.;t)$,  and that  the support of this density   is included in
 the  set  ${\{(m,x)\in \mathbb{R}^{d+1},~m\geq x^1\}}$.

 In the present work, we prove that
the density $p_V$ is a weak solution { of} a
 partial differential equation (PDE).
 Furthermore,  we exhibit a boundary  condition on the set $\{(m,x)\in \mathbb{R}^{d+1},~m=x^1\}.$
 This work  extends 
 the results given in    \cite{WalyLaMo} and in  Ngom's thesis \cite{waly} obtained in the case where $X$ is  a L\'evy process  and  where it is proved that the density is a weak solution to an integro-differential equation.
\\
 In the literature, there exist
many  studies on the law of  $V_t$.
 When the process $X$ is a Brownian motion, one can refer to \cite{HKR, CJY}  where an explicit expression of $p_V$ is given.
 When $X$ is a one-dimensional linear diffusion,
\cite{CsFolSal} provides an expression of $p_V$
using the scale function, the speed measure
and the density of the law of some hitting times. See also \cite{patie,dorobantu} for the particular case of Ornstein-Uhlenbeck process.
   For some applications to the    { local score of a biologic sequence}, the case of reflected Brownian motion  is  presented in \cite{LagMerVal}. The law of the maximum  $M_t$ is studied in \cite{azais}
 for general Gaussian processes. The case of a L\'evy process $X$ is
  deeply investigated in the  literature,
see for instance \cite{DonKyp,waly}. Moreover
 Section 2.4 in  Ngom's thesis \cite{waly} provides
 the existence and the regularity of the joint law density of the process $(M_t,X_t)$ for a L\'evy process $X.$
 {In the case where $X$ is a martingale (see e.g. \cite{Rog,DuemRog} or  \cite{cox,HLOST}), the law of the running maximum is provided.}
}
 Such studies concerning this running maximum
 are useful in financial area which involve hitting times,  for instance for the pricing of barrier option.
 It is known that the law of  hitting times is closely related to the one
 of the running maximum, see \cite{BHR,
  coutin-dorobantu-2011,volpi}.
 { As an application of our work,
 think of a firm the activity of which is characterized by a set of                               processes $(X^1,\cdots,X^d)$. But one of them, e.g., $X^1$ could be linked to an alarm, namely: when there exists $s\leq t$ such
that $X^1_s$  exceeds a threshold $a,$
that is equivalent to $M_t=\sup_{0\leq s\leq t} {{X_s^1}}\geq a,$ some action is important
to operate. So, the firm needs to know the law of such pair
$(M_t,X_t)$;  more specifically the law of the stopping time $\tau_a= \inf\{u, X^1_u \geq a\}$,
is linked to the law of $M$ as following:
$\{\tau_a\leq t\}=\{M_t\geq a\}.$
To know the probability of such an alert, the law of the pair $(M_t,X_t)$  will be useful.}

{ We provide an infinite expansion of the density of the law of the pair $(M_t,X_t)$ which can leads to numerical approximation.}

 Let $(\Omega,{\cal F}, {\mathbb P})$ be a probability space 
 endowed with a $d$-dimensional  Brownian.
{ Let $X$ be the diffusion process with values in  $\mathbb{R}^d$ solution of }
\begin{align}
\label{modelm}
dX_t= B(X_t) dt + A(X_t)dW_t,~~{ t>0}
\end{align}
where   $X_0$ is  a random variable independent of the Brownian motion $W$, with law $ \mu_0$,     and $A$  (resp.  $B$) is a map from $\mathbb{R}^d$ to   the set of  $(d\times d)$ matrices  (resp. to $\mathbb{R}^d).$
 {Let us denote}  $C^i_b(\mathbb{R}^d,\mathbb{R}^n)$ the set of  functions { on $\mathbb{R}^d$, which are $i$ times differentiable}, bounded, with bounded derivatives, taking their values in $\mathbb{R}^n$.
{Let $\FF=(\F_t,t\geq 0)$ be the  completed right-continuous filtration defined by  $  \F_t:=\sigma\{X_0,~W_s,s\leq t\}\vee {\mathcal N}$ where ${\mathcal N}$ is the set of negligible sets of ${\mathcal F}.$ }

{ Under classical assumptions on $A$ and $B$
(cf.\eqref{h1h2} and \eqref{hyp:UnEl} below), then according to \cite{LaureMo},
for all $t>0$, the law of $V_t=(\sup_{u\leq t } X^1_t,X_t)$ has a density with respect to the Lebesgue measure on ${\mathbb R}^{d+1}.$}

  The main results and notations are given in Section \ref{sec1}:
 in  the $d$-dimensional case, under  a quite natural assumption (meaning Hypothesis \ref{hyp-dens-outil} below) on the regularity of $p_V$ around the boundary of $\Delta,$  $p_V$ is a weak  solution of a Fokker-Planck PDE on the  subset
 of $\mathbb{R}^{d+1}$ defined by $\{(m,x),~m\geq x^1\}.$ When $A=I_d,$ this assumption is satisfied, see Theorem \ref{existence-dens}.
 The main results are proved  in Section \ref{ApA} under  Hypothesis \ref{hyp-dens-outil}. Section \ref{density} is devoted to prove  that Hypothesis \ref{hyp-dens-outil} is satisfied when $A= I_d.$
 { The main tool is  an infinite expansion of $p_V$ given  in Proposition \ref{pro-forme-dens-phi-t}.}
 In Section  \ref{sec5},
one-dimensional case, a Lamperti transformation \cite{lamp}  allows to get the main result for any $A\in C^2_b(\mathbb{R},\mathbb{R}).$  
Finally   Appendix
 contains   {
some technical tools useful for the  proofs of main results.}

\section{Main results { and some notations}}
\label{sec1}

{{In this section, we give our main results, the proofs will be given later on, as it is mentioned in the introduction.

\subsection{Notations}

 { Let $\Delta$ be the open set of $\mathbb{R}^{ D+1}$ given by } $\Delta:=  \{ (m,x), m\in \mathbb{R} , x \in\mathbb{R}^{d}, m> x^1, x=(x^1,\cdots,x^d)\}.
$
{ From now on, we use Einstein's convention.}
 The infinitesimal generator ${\mathcal L}$ of the diffusion $X$ defined in  \eqref{modelm} 
 { is the partial differential operator }
 on { the space $C^2(\mathbb{R}^d,\mathbb{R})$}  given by:
\begin{align}
\label{def-genm}
{\cal L}= B^i\partial_{x_i}+\demi (A A^t)^{ij}\partial^2_{x_i,x_j}.
\end{align}}
{where   $A^t$ denotes the transposed matrix.}
\\
 Its adjoint operator is
 ${\LL^*f=
 \demi \Sigma^{ij}\partial^2_{ij}f
 -[B^i-\partial_j(\Sigma^{ij})]\partial_if
 -[\partial_iB^i-\demi\partial^2_{ij}
 (\Sigma^{ij})]f}$ where $\Sigma:=AA^t.$
 In what follows, 
the operators $\LL$  and $\LL^* $ are extended to the space  $C^2(\mathbb{R}^{d+1},\mathbb{R})$, for $\Phi \in C^2(\mathbb{R}^{d+1},\mathbb{R})$ as 
$$\LL(\Phi)(m,x)=B^i(x)\partial_{x_i}\Phi(m,x)+\demi \Sigma^{ij}(x)\partial^2_{x_i,x_j}\Phi(m,x),$$
and $\LL^*(\Phi)(m,x)=$
$${\demi\Sigma^{ij}(x)\partial^2_{ij} \Phi(m,x)-[B^i-\partial_j(\Sigma^{ij})](x)\partial_{x_i}\Phi(m,x)+[\demi \partial^2_{x_i,x_j}\Sigma^{ij}-\partial_{x_i}B^i](x)\Phi(m,x)}.$$
It can be stressed that these operators
are degenerated since no derivative with respect to the variable $m$ appears.}
\\
 Let $A^1(x)$ be the $d$ dimensional vector $A^1(x)=(A^1_j(x),j=1,...,d)\in \mathbb{R}^d$ corresponding to the first column of $A(x),$
{similarly  $A_j(x)$ denotes its $j$th line.} 
\\
{ Recall that
  $M$ denotes} the running maximum of the first component of  $X$, meaning
$M_t=\sup_{0\leq s\leq t} \{X^1_s\}$
and  $V$ is the  {$\mathbb{R}^{d+1}$-valued process defined  by $(V_t=(M_t, X_t), \forall t\geq 0).$} { Finally, $\tilde x\in\mathbb{R}^{d-1}$  denotes the vector $(x_2,...,x_d).$
}

 In \cite{LaureMo}, {under Assumptions \eqref{h1h2} and \eqref{hyp:UnEl}} below, when the initial value is deterministic,  $X_0=x_0 \in {\mathbb R}^d$,
the density of $V_t$  exists and is denoted $p_V(.;t,x_0)$.  If $\mu_0$ is the distribution of $X_0$, the density of the law of $V_t$  with respect to the Lebesgue measure on ${\mathbb R}^{d+1} $  is 
\begin{align}
\label{(*)}
p_V(.;t,\mu_0):=\int_{{\mathbb R}^d} p_V(.;t,x_0)d\mu_0(x_0)
\end{align}
 When {there is no ambiguity},  the dependency in $\mu_0$ {is omitted.}
\\
 Since $M_t \geq X^1_t,$ the support of $p_V(.;t,\mu_0)$ is contained in
$\bar{\Delta}:=\left\{(m,x)\in {\mathbb R}^{d+1} | m\geq x^1\right\}.$
\\
\subsection{Mains results}
 The aim of this article is to show that the density
$p_V$ is a weak solution of a Fokker-Planck PDE.
{ The coefficients $B$ and $A$ are assumed to } satisfy
\begin{equation}
\label{h1h2}
 B \in C^1_b(\mathbb{R}^d,\mathbb{R}^d)\mbox{ and } { A\in C^2_b(\mathbb{R}^d,\mathbb{R}^{d\times d}) },~
\end{equation}
 and   that there exists a constant $c>0$ such that { the Euclidean norm of any vector $v$ satisfies}
\begin{align}
\label{hyp:UnEl}
c\|v\|^2\leq  v^tA(x)A^t(x) v ,~~\forall v,x\in {\mathbb R}^d.
\end{align}

{Our first result will be established under the following hypothesis
which is a quite natural assumption on the regularity of $p_V$ { in the neighbourhood of the boundary of $\Delta$}
 since the {  set of times} where the process
$M$ {  increases} { is included in } the set} $\{t,~~M_t=X_t\}:$
\begin{hyp}
\label{hyp-dens-outil}
The density   of the {law} of $V_t=(M_t,X_t),$ {denoted   by  $p_V$
 \eqref{(*)},}
   satisfies  
    \\
(i) {the map} $(t,m,\tilde{x})\mapsto \sup_{u >0} p_V(m,m-u,\tilde{x};t)$ belongs to $L^1([0,T] \times {\mathbb R}^d,dtdmd\tilde{x}).$
   \\
(ii) for all $t>0$ 
almost surely in $(m,\tilde{x}) \in {\mathbb R}^d,$ $\lim_{u\rightarrow 0^+} p_V(m,m-u,\tilde{x};t)$ exists and is denoted by $ p_V(m,m,\tilde{x};t).$
\end{hyp}
\begin{thm}
\label{thme-sous-hyp-dens}
Assume that $A$ and $B$ fulfil  \eqref{h1h2} and \eqref{hyp:UnEl} and {that} $(M,X)$ fulfils  Hypothesis \ref{hyp-dens-outil}.
{ Then, for all initial law $\mu_0$} and $F \in C^2_b({\mathbb R}^{d+1},{\mathbb R})$:
\begin{align}
\label{PDEweakm}
{\mathbb E}\left[F(M_t,X_t)\right)& ={\mathbb E} \left[ F(X^1_0,X_0)\right] + \int_0^t {\mathbb E} \left[ {\mathcal L}\left(F\right)(M_s,X_s)\right] ds\nonumber
\\
&+\frac{1}{2} {\int_0^t
{\mathbb E}\left[
\partial_mF(X^1_s,{X_s})
\|A^{1}(X_s)\|^2
\frac{p_V(X^1_s,{X_s};s)}
{p_{X}(X_s;s)}\right] ds.}
\end{align}
\end{thm}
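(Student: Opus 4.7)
The plan is to apply Itô's formula to $F(M_t,X_t)$, take expectation, and identify the resulting boundary contribution via a local-time analysis of the reflected process $Y_s := M_s - X^1_s$. Since $M$ is continuous and of bounded variation, $\langle M,M\rangle = 0$ and $\langle M,X^i\rangle = 0$, so Itô's formula yields
\begin{align*}
F(M_t,X_t) = F(M_0,X_0) + \int_0^t \partial_m F(M_s,X_s)\,dM_s + \int_0^t {\cal L}(F)(M_s,X_s)\,ds + N_t,
\end{align*}
where $N_t = \int_0^t \partial_{x_i}F(M_s,X_s)A^i_j(X_s)\,dW^j_s$ is a local martingale. Taking expectation (with $M_0 = X^1_0$ a.s.) eliminates $N$ and recovers every term of \eqref{PDEweakm} except for the contribution $\mathbb{E}\bigl[\int_0^t \partial_m F(M_s,X_s)\,dM_s\bigr]$, which must match the boundary correction.

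To evaluate this term I would first use that $dM_s$ is carried by $\{s:M_s=X^1_s\}$, hence $\partial_m F(M_s,X_s)$ may be replaced by $\partial_m F(X^1_s,X_s)$ inside the integral. Next I would apply the Tanaka-Meyer formula to the non-negative semimartingale $Y^+=Y$: since $\mathbf{1}_{\{Y_u = 0\}}\,du$ is the zero measure a.s. (the elliptic diffusion $X^1$ spends zero Lebesgue time at any level), the stochastic and drift integrals against $dX^1$ restricted to $\{Y=0\}$ vanish, so the Tanaka decomposition collapses to the Skorokhod-type identity
\begin{align*}
M_s - M_0 = \tfrac{1}{2}\,L^0_s(Y),
\end{align*}
with $L^0(Y)$ the semimartingale local time of $Y$ at $0$. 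This yields
\begin{align*}
\mathbb{E}\!\left[\int_0^t \partial_m F(M_s,X_s)\,dM_s\right] = \tfrac{1}{2}\,\mathbb{E}\!\left[\int_0^t \partial_m F(X^1_s,X_s)\,dL^0_s(Y)\right].
\end{align*}

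The main obstacle is converting this local-time expression into the density-based boundary integral appearing in \eqref{PDEweakm}. Using that $\langle Y\rangle_s = \langle X^1\rangle_s$ has density $\|A^1(X_s)\|^2\,ds$, I would invoke the occupation density formula to approximate $dL^0_s(Y)$ by $\tfrac{1}{\epsilon}\mathbf{1}_{[0,\epsilon)}(Y_s)\|A^1(X_s)\|^2\,ds$, so that the right-hand side above equals
\begin{align*}
\lim_{\epsilon \to 0^+} \frac{1}{2\epsilon}\int_0^t\!\!\int_{\mathbb{R}^d}\!\!\int_{x^1}^{x^1+\epsilon}\partial_m F(x^1,x)\,\|A^1(x)\|^2\,p_V(m,x;s)\,dm\,dx\,ds.
\end{align*}
Hypothesis \ref{hyp-dens-outil}(ii) supplies the pointwise limit $p_V(x^1,x;s)$, and (i) supplies an $L^1$-dominating function allowing dominated convergence. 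Passing to the limit produces $\tfrac{1}{2}\int_0^t\!\int_{\mathbb{R}^d}\partial_m F(x^1,x)\|A^1(x)\|^2 p_V(x^1,x;s)\,dx\,ds$, which is rewritten as the expectation in \eqref{PDEweakm} by multiplying and dividing by $p_X(x;s)$. The two delicate points are the Tanaka identification $dM = \tfrac{1}{2}\,dL^0(Y)$ and the $\epsilon \to 0^+$ passage to the trace of $p_V$ on $\{m=x^1\}$; the latter is exactly what Hypothesis \ref{hyp-dens-outil} is designed to control, and the general initial law $\mu_0$ is recovered by integrating the deterministic-$X_0$ result against $\mu_0$.
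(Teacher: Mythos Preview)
Your route via the Tanaka identity $dM=\tfrac12\,dL^0(Y)$ for $Y=M-X^1$ is genuinely different from the paper's and is an attractive shortcut. The paper never invokes local time: it computes the right derivative of $t\mapsto\mathbb{E}\bigl[\int_0^t\Psi(V_s)\,dM_s\bigr]$ by hand, approximating $M_{t+h}-M_t$ by $\bigl(\sup_{u\le h}A^1_k(X_t)\hat W^k_u-(M_t-X^1_t)\bigr)_+$, using the explicit law of the Brownian maximum through the auxiliary function $\mathcal H$, and then appealing to Hypothesis~\ref{hyp-dens-outil} to let $h\to0$. That path requires a chain of increment estimates (Lemmas~\ref{maj-x-unif-h}--\ref{pro:calcul-esp-cond}) and a separate $W^{1,1}$ argument to upgrade the right derivative to absolute continuity. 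Your argument, once the local-time identification is in place, gives the representation as a single $ds$-integral directly, which is cleaner. The Tanaka step itself is correct: the vanishing of $\int_0^t\mathbf 1_{\{Y_s=0\}}\,dX^1_s$ follows because $(M_s,X_s)$ has a density for $s>0$, so $\mathbb P(Y_s=0)=0$ and by Fubini the zero set of $Y$ has Lebesgue measure zero a.s.\ (this is the right justification, rather than ``$X^1$ spends zero time at a level'').

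There is, however, a real gap at the occupation-density step. The approximation $A_\epsilon:=\tfrac1\epsilon\int_0^t\Psi(X_s)\mathbf 1_{[0,\epsilon)}(Y_s)\,d\langle Y\rangle_s\to\int_0^t\Psi(X_s)\,dL^0_s(Y)$ holds almost surely, and Hypothesis~\ref{hyp-dens-outil} lets you compute $\lim_{\epsilon\to0}\mathbb{E}[A_\epsilon]$ by dominated convergence on the \emph{deterministic} integral $\int_0^t\int_{\mathbb R^{d+1}}\cdots\,dm\,dx\,ds$. What you have not shown is that these two limits coincide, i.e.\ that $A_\epsilon\to\int_0^t\Psi\,dL^0_s(Y)$ in $L^1$. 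Hypothesis~\ref{hyp-dens-outil}(i) yields only $\sup_\epsilon\mathbb{E}|A_\epsilon|<\infty$, not uniform integrability. The gap is fillable --- for instance, writing $A_\epsilon=\tfrac1\epsilon\int_0^\epsilon\bigl(\int_0^t\Psi\,dL^a_s\bigr)\,da$ via the generalised occupation formula and bounding $|A_\epsilon|\le\|\Psi\|_\infty\sup_{a}L^a_t(Y)$, the Barlow--Yor inequality gives $\mathbb{E}\bigl[\sup_a(L^a_t(Y))^p\bigr]\le C_p\,\mathbb{E}[\langle Y\rangle_t^{p/2}]\le C_p(\|A^1\|_\infty^2 t)^{p/2}$, hence $L^p$-domination uniform in $\epsilon$ --- but you should name the tool, since the hypothesis alone does not close this step.
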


{ Actually  $p_X$ is the solution of the PDE
$ \partial_t p= {\mathcal L}^* p,~~p(.;0)=\mu_0,$
where 
\\
${ \LL^*f=
 \demi \Sigma^{ij}\partial^2_{ij}f
 -[B^i-\partial_j(\Sigma^{ij})]\partial_if
 -[\partial_iB^i-\demi\partial^2_{ij}
 (\Sigma^{ij})]f}$.}
Let $a_{ij}:=\Sigma^{ij},$ 
\\
$a_i:=[B^i-\partial_j(\Sigma^{ij})]\partial_i,$ and $ { a_0:=\partial_iB^i-\demi\partial^2_{ij}(\Sigma^{ij})}.$  Under Assumptions \eqref{h1h2} and \eqref{hyp:UnEl}, the operator $\LL^*$ satisfies all the assumptions of  Theorem  3.5
\cite{GarMenal} (see (3.2) (3.3) 3.4) page 177). As a consequence of Theorem 3.5  line 14  $p_X(x;s)>0$.

{ \begin{rem}
(i)  When $A$ is the identity matrix of ${\mathbb R}^d$  (denoted by $I_d$)
 and  $B\in C^1_b(\mathbb{R}^d,\mathbb{R}^d)$,   Hypothesis
\ref{hyp-dens-outil} is fulfilled, see { Theorem \ref{existence-dens} }below.
{ When $d=1$, using a Lamperti transformation 
\cite{lamp}, 
 one proves that Hypothesis
\ref{hyp-dens-outil} is always fulfilled, see Section \ref{sec5}.
}

(ii)
 This result is similar to  Theorem 2.1 
in \cite{WalyLaMo} where  the process
 $X$ is a L\'evy process.
 Proposition 4 in \cite{WalyLaMo}  
 gives a key of the last term in \eqref{PDEweakm}
 with factor $\demi$. Firstly,  roughly speaking, 
 the local behaviour of $X_t^1-X_s^1$
  conditionally to ${\mathcal F}_s$ is the
   one of  $\|A^{1}(X_s)\|(W^1_{t}-X_s^1).$
 So, as in the Brownian case, the running maximum $M$ of $X^1$
 is increasing as soon as it is equal to $X^1$ and both $M$ and $X^1$ are increasing; it is well known that the Brownian process $W^1$ is increasing with probability $\demi,$
  more specifically, we have { $\mathbb{P}\{\lim_{t\rightarrow s+}\frac{W^1_t -W_s^1}{t-s}=-\infty\}=\mathbb{P}\{\lim_{t\rightarrow s+}\frac{W^1_t -W_s^1}{t-s}=+\infty\}
 =\demi$.}
 \end{rem}

The starting point of the proof of Theorem \ref{thme-sous-hyp-dens}
is the {It\^o's} formula: 
 let $F$ belong to $C^2_b(\mathbb{R}^{d+1},\mathbb{R})$.
The process $M$ is increasing, {hence} $V=(M,X)$ is a semi-martingale.  Applying  It\^o's formula to $F(V)$ and taking expectation of both members,
\begin{align*}
{\mathbb E} \left[F(V_t)\right]={\mathbb E} \left[F(V_0)\right] +\int_0^t {\mathbb E}\left[
{\mathcal L}(F)(V_s)\right]ds
+{\mathbb E}\left[ \int_0^t \partial_mF(V_s) dM_s\right].
\end{align*}
}
{ The novelty comes from the third term of the right member of the previous equation.
The following theorem proved in Section \ref{ApA} achieves the proof of Theorem \ref{thme-sous-hyp-dens}.}
\begin{thm}
\label{pro-sous-hyp-dens}
Assume that $A$ and $B$ fulfil \eqref{h1h2} and \eqref{hyp:UnEl} and {that} $(M,X)$ fulfils Hypothesis \ref{hyp-dens-outil}.  {For all $\Psi \in C^1_b({\mathbb R}^{d+1},{\mathbb R})$, let $F_\psi$ be the map}
\\
 $F_\psi:t\mapsto {\mathbb E}
 \left[\int_0^t \Psi(M_s,X_s)dM_s\right].$
  Then $F_{\Psi}$ is absolutely
  continuous with respect to the Lebesgue measure and its derivative is
\begin{align*}
\dot{F_{\Psi}}(t)=\frac{1}{2}
\int_{{\mathbb R}^d}
{ \Psi}(m,m,\tilde{x}) \|A^{1}(m,\tilde{x})\|^2p_V(m,m,\tilde{x};t) dm d\tilde{x} .
\end{align*}
\end{thm}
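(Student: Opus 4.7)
The plan is to identify the Stieltjes measure $dM_s$ with half the local time at zero of the nonnegative semimartingale $Y_s := M_s - X_s^1$. Since $Y\ge 0$, Tanaka's formula applied to $Y^+ = Y$ gives
\[
Y_t = Y_0 + \int_0^t \1_{\{Y_s>0\}}\,dY_s + \demi L_t^0(Y),
\]
where $L^0(Y)$ denotes the semimartingale local time of $Y$ at zero. Substituting $dY = dM - dX^1$ and using that $dM$ is concentrated on $\{Y=0\}$ yields
\[
M_t - M_0 = \int_0^t \1_{\{Y_s=0\}}\,dX_s^1 + \demi L_t^0(Y).
\]
Since $d\langle Y\rangle_s = \|A^1(X_s)\|^2\,ds$ is bounded below thanks to \eqref{hyp:UnEl}, the occupation-time formula for $Y$ forces $\{s:Y_s=0\}$ to have Lebesgue measure zero a.s. Both the drift part $\int_0^t \1_{\{Y_s=0\}}B^1(X_s)\,ds$ and the martingale part $\int_0^t \1_{\{Y_s=0\}}A^1(X_s)\cdot dW_s$ (the latter having zero quadratic variation) therefore vanish, leaving the clean identity $M_t - M_0 = \demi L_t^0(Y)$.

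Since $dL^0(Y)$ is supported on $\{Y=0\}$ and $\Psi$ is continuous, $\int_0^t \Psi(M_s,X_s)\,dM_s$ reduces to $\demi \int_0^t \Psi(M_s,X_s)\,dL_s^0(Y)$. For the latter, I would invoke the standard occupation-time approximation of local time: as $\epsilon\to 0^+$,
\[
\frac{1}{\epsilon}\int_0^t \Psi(M_s,X_s)\,\1_{[0,\epsilon]}(Y_s)\|A^1(X_s)\|^2\,ds \longrightarrow \int_0^t \Psi(M_s,X_s)\,dL_s^0(Y)
\]
in $L^1(\mathbb{P})$. Taking expectation of the left-hand side, applying Fubini with the density $p_V(\cdot;s)$ of $V_s$, and changing variables to $u := m - x^1$ yields
\[
\int_0^t \int_{\mathbb{R}^d} \frac{1}{\epsilon}\int_0^\epsilon \Psi(m,m-u,\tilde x)\|A^1(m-u,\tilde x)\|^2 p_V(m,m-u,\tilde x;s)\,du\,dm\,d\tilde x\,ds.
\]

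By Hypothesis \ref{hyp-dens-outil}(ii) and the continuity of $\Psi$ and $A$, the inner $\epsilon$-average converges pointwise to $\Psi(m,m,\tilde x)\|A^1(m,\tilde x)\|^2 p_V(m,m,\tilde x;s)$; Hypothesis \ref{hyp-dens-outil}(i), combined with the boundedness of $\Psi$ and $A$, supplies an integrable dominating function. Dominated convergence then gives
\[
F_\Psi(t) = \demi \int_0^t \int_{\mathbb{R}^d} \Psi(m,m,\tilde x)\|A^1(m,\tilde x)\|^2 p_V(m,m,\tilde x;s)\,dm\,d\tilde x\,ds,
\]
which is manifestly absolutely continuous in $t$ with the announced derivative. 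The main obstacle is the process-dependent local-time approximation used above: the classical occupation-time formula is stated for integrands depending on $Y_s$ alone, so extending it to integrands involving the ambient process $(M_s,X_s)$ requires either a density/monotone-class argument, or---what Hypothesis \ref{hyp-dens-outil} is precisely tailored for---bypassing the pathwise statement entirely and commuting limit and expectation only at the level of the computed integral above, using the uniform $L^1$-bound from Hypothesis \ref{hyp-dens-outil}(i).
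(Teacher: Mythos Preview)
Your approach is correct and genuinely different from the paper's. The paper never writes $dM=\tfrac12\,dL^0(M-X^1)$; instead it analyses the increment $\mathbb{E}\bigl[\int_t^{t+h}\Psi(V_s)\,dM_s\bigr]$ directly by approximating $M_{t+h}-M_t$ with the frozen-coefficient surrogate $\bigl(\sup_{u\le h}A^1(X_t)\!\cdot\!\hat W_u-(M_t-X_t^1)\bigr)_+$, computes the $\mathcal F_t$-conditional expectation via the reflection principle through the auxiliary function $\mathcal H(\theta)=\tfrac{1}{\sqrt{2\pi}}e^{-\theta^2/2}-\theta\,\Phi_G(-\theta)$, and finally upgrades the right derivative to absolute continuity by a Sobolev $W^{1,1}$ argument. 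Your local-time identification is more conceptual and explains the factor $\tfrac12$ structurally; the paper's route avoids local-time machinery and produces uniform-in-$t$ error bounds (Proposition~\ref{proDiagm}\,(i)) as a by-product, which it needs for the $W^{1,1}$ step.

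One caveat on your last paragraph: the ``bypass'' you sketch is circular. Hypothesis~\ref{hyp-dens-outil} lets you pass to the limit in the \emph{computed} integral $\int_0^t\!\int_{\mathbb R^d}\tfrac1\varepsilon\!\int_0^\varepsilon\Psi\,\|A^1\|^2 p_V\,du\,dm\,d\tilde x\,ds$, but it does not by itself tell you that this limit equals $2F_\Psi(t)=\mathbb{E}\bigl[\int_0^t\Psi\,dL_s^0(Y)\bigr]$; for that you still need the process-dependent approximation to converge to the local-time integral at least in expectation. The honest closure is the first option you mention: establish the generalised occupation-time identity $\int_0^t H_s f(Y_s)\,d\langle Y\rangle_s=\int_{\mathbb R}f(a)\bigl(\int_0^t H_s\,dL_s^a\bigr)\,da$ for bounded progressive $H$ (freeze $H$ on a partition and use right-continuity of $a\mapsto L_\cdot^a$ in the uniform norm), then take $f=\tfrac1\varepsilon\mathbf 1_{[0,\varepsilon]}$. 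Uniform integrability of the $\varepsilon$-approximants follows from the domination by $\sup_{a\in[0,1]}L_t^a\in L^p$, which secures the $L^1$ convergence and hence the identification with $2F_\Psi(t)$.
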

Remark that, as it is expressed in Theorem \ref{thme-sous-hyp-dens}, this derivative {can} be written
$$
\frac{1}{2} 
{\mathbb E}\left[
\Psi(X^1_t,{X_t})
\|A^{1}({X_t})\|^2
\frac{p_V(X^1_t,{X_t};t)}
{p_{X}( X_t;t)}\right] 
.$$

\begin{rem}
{The above   proposition} provides an explicit formulation of the derivative of the function $F_\Psi.$ {Note that} the absolute continuity of $F_\psi$ could be established as a direct consequence of
the existence of the density of the {law} of
the hitting time $\tau_a=\inf\{s:X^1_s\geq a\}$
when it exists, using the identity
$\{\tau_a\leq t\}=\{M_t\geq a\}.$
\\
Conversely, it could be proved that {the} absolute continuity of $F_\Psi$ yields the existence of
 the density of the {law} of
the hitting time {$\tau_a$}, using a sequence of $C^2_b(\mathbb{R},\mathbb{R})$ functions
$(F_n)$ approximating the indicator function $\1_{[a,\infty)}$, namely this density satisfies
$f_{\tau_a}(t)=\demi \int_{\mathbb{R}^{d-1}}p_V(a,a,\tilde x;t)d\tilde x.$
\end{rem}

\begin{thm}
\label{existence-dens}
Assume that $A=I_d$ and $B$  satisfies Assumption
\eqref{h1h2} then, for all $t>0$ the distribution of the pair $(M_t,X_t)$ { fulfils  Hypotheses \ref{hyp-dens-outil}}. As a consequence,
{ for all $F\in C^2_b({\mathbb R}^{d+1},{\mathbb R})$
\begin{align*}
&{\mathbb E}\left[F(M_t,X_t)\right] ={\mathbb E} \left[ F(X^1_0,X_0)\right] + \int_0^t {\mathbb E} \left[ {\mathcal L}\left(F\right)(M_s,X_s)\right] ds
\\
&+\frac{1}{2} \int_0^t
{\mathbb E}\left[
\partial_mF(X^1_s,{X_s})
\frac{p_V(X^1_s,{X_s};s)}
{p_{ X}( X_s;s)}\right] ds.
\end{align*}}
\end{thm}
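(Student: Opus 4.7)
The plan is to split the result into two tasks: first verify Hypothesis \ref{hyp-dens-outil} when $A = I_d$, then deduce the displayed PDE identity. The second task is immediate, since $A = I_d$ gives $\|A^{1}(x)\|^2 \equiv 1$ so Theorem \ref{thme-sous-hyp-dens} specialises at once to the claimed formula. All the work therefore lies in checking parts (i) and (ii) of the hypothesis.

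I would first treat the drift-free baseline $B \equiv 0$, where $X_t = X_0 + W_t$. The $d$ components of $W$ are independent Brownian motions, so conditionally on $X_0 = x_0$ the joint density of $(M_t, X_t)$ factorises as the classical reflection-principle density of $(M_t, X_t^1)$, namely
\[
q_t(m, y; x_0^1) = \frac{2(2m - x_0^1 - y)}{\sqrt{2\pi t^3}} \exp\!\Bigl(-\frac{(2m - x_0^1 - y)^2}{2t}\Bigr), \qquad m \geq \max(x_0^1, y),
\]
multiplied by the Gaussian density of $\tilde{X}_t$. This density extends continuously to the boundary $\{m = y\}$, giving condition (ii) with the explicit boundary value obtained by setting $y = m$. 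For condition (i), substituting $y = m - u$ yields a factor $(m - x_0^1 + u) \exp(-(m - x_0^1 + u)^2/(2t))$ whose supremum over $u > 0$ is bounded by $\max(\sqrt{t}\,e^{-1/2},\ (m - x_0^1) e^{-(m - x_0^1)^2/(2t)})$; integrating the resulting envelope against $d\tilde{x}\, dm\, dt\, d\mu_0(x_0)$ over $[0, T] \times \mathbb{R}^d$ gives a finite value.

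For general $B$ satisfying \eqref{h1h2}, I would transfer the baseline result via Girsanov. Under the measure $\tilde{\mathbb{P}}$ defined by $d\tilde{\mathbb{P}} = Z_t^{-1}\, d\mathbb{P}$ with
\[
Z_t = \exp\!\Bigl(\int_0^t B(X_s) \cdot d\tilde{W}_s - \tfrac{1}{2}\int_0^t \|B(X_s)\|^2 \, ds\Bigr),
\]
the process $X - X_0$ is a Brownian motion $\tilde{W}$ and $\mathbb{E}[\varphi(M_t, X_t)] = \tilde{\mathbb{E}}[\varphi(M_t, X_t) Z_t]$. Expanding the exponential as a Dyson/Duhamel series in the bounded drift — which I expect to be precisely the content of Proposition \ref{pro-forme-dens-phi-t} — writes
\[
p_V(m, y, \tilde{x}; t) = \sum_{n \geq 0} p_V^{(n)}(m, y, \tilde{x}; t),
\]
where each $p_V^{(n)}$ is an integral over the time simplex $0 < s_1 < \cdots < s_n < t$ of products of Brownian joint densities convolved against the bounded smooth coefficients of $B$ and $\nabla B$. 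Every term inherits continuity up to $\{m = y\}$ from the Brownian baseline, so (ii) passes to the sum, and each $p_V^{(n)}$ admits a Gaussian envelope with prefactor of order $C^n t^n / n!$, so the series of suprema $\sup_{u > 0} p_V^{(n)}(m, m - u, \tilde{x}; t)$ converges absolutely in $L^1([0, T] \times \mathbb{R}^d)$, yielding (i).

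The main obstacle will be producing Gaussian envelopes uniform in $u > 0$ over a whole neighbourhood of $\{m = y\}$, so that $\sup_u$ and $\sum_n$ can be exchanged. Naively differentiating the Brownian joint density near the diagonal introduces singular factors $t^{-k/2}$; the Dyson series must therefore be arranged — presumably by integration by parts transferring derivatives from the heat kernel onto the smooth coefficients of $B$ — so that each order costs only $\|B\|_\infty$ or $\|\nabla B\|_\infty$, producing the required factorial decay. Once those uniform bounds are in hand, conditions (i) and (ii) follow by dominated convergence, and the displayed PDE identity follows by direct application of Theorem \ref{thme-sous-hyp-dens}.
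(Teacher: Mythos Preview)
Your overall architecture matches the paper's: reduce to the Brownian baseline via Girsanov, expand $p_V$ as a series whose zeroth term is the reflection-principle density, derive Gaussian envelopes term by term to get Hypothesis~\ref{hyp-dens-outil}(i), and pass to the boundary limit term by term for (ii). Two corrections are needed. First, the series expansion is not Proposition~\ref{pro-forme-dens-phi-t} (which computes a limit of expectations involving $\mathcal H$ and has nothing to do with representing $p_V$); the relevant results are Proposition~\ref{rem-dens-mal}, which sets up the fixed-point equation $p_V = p_0 + \mathcal I[p_V]$ via a single Malliavin integration by parts, and Proposition~\ref{pro-dvpt-serie-L1}, which iterates it to $p_V=\sum_n \mathcal I^n[p_0]$. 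Note too that the iteration is not a plain time-ordered convolution of Brownian kernels: the $\beta$-type pieces carry the maximum variable $m$ directly inside $p_V(m,a;s)$, so the recursion is genuinely two-channel and must be estimated as such.

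Second, and more substantively, your proposed mechanism for controlling the singular derivatives is wrong. Integrating by parts in the spatial variables does not transfer $\partial_k p_{W^{*1},W}$ onto $B$: it lands on the previous iterate $p_n$ as well, which for $n=0$ is again a derivative of a heat kernel with the same $t^{-1/2}$ blow-up; and for $k=m$ there is no spatial variable to integrate against at all. The paper does not try to remove the singularity. It keeps the bound $|\partial_k p_{W^{*1},W}(\cdot;t)| \le D\,t^{-1/2}\phi_{d+1}(\cdot;2t)$ and shows by induction (Proposition~\ref{pro-maj-pv-g} and Lemma~\ref{maj-ikj-x1m}) that $|p_n| \le C_n t^{n/2}\phi_{d+1}$ with $C_n \propto 1/\Gamma(1+n/2)$, the half-power coming from the Beta integral $\int_0^t (t-s)^{-1/2}s^{n/2}\,ds$. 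So the decay is $t^{n/2}/\Gamma(1+n/2)$, not $t^n/n!$, and only $\|B\|_\infty$ enters, never $\|\nabla B\|_\infty$. Once you replace your integration-by-parts paragraph by this Beta-integral bookkeeping, the rest of your plan (dominated convergence for both hypotheses, then Theorem~\ref{thme-sous-hyp-dens}) goes through exactly as in the paper; for (ii) the $\alpha$-pieces converge to their boundary value while the $\beta$-pieces actually vanish at the boundary (Lemma~\ref{lem-cont-beta-bis}).
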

\begin{proof}
This theorem is a consequence of Theorem 
\ref{thme-sous-hyp-dens} and Proposition 
\ref{cont-ui--dens}.
\end{proof}

When $d=1$ a Lamperti transformation { leads to} the following corollary:
\begin{corollary}
Assume that $d=1$, $A$ and $B$ satisfies
\eqref{h1h2} and \eqref{hyp:UnEl}, the density $p_V$
satisfies Hypothesis \ref{hyp-dens-outil}
so
\begin{align*}
&{\mathbb E}\left[F(M_t,X_t)\right] ={\mathbb E} \left[ F(X_0,X_0)\right] + \int_0^t {\mathbb E} \left[ {\mathcal L}\left(F\right)(M_s,X_s)\right] ds
\\
&+\frac{1}{2} \int_0^t
{\mathbb E}\left[A^2(X_s)
\partial_mF(X_s,{X_s})
\frac{p_V(X_s,{X_s};s)}
{p_{ X}( X_s;s)}\right] ds.
\end{align*}
\end{corollary}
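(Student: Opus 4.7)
The strategy is to reduce to the case $A=I$ already treated in Theorem \ref{existence-dens} by a Lamperti transformation. Since $A\in C^2_b(\mathbb{R},\mathbb{R})$ and the ellipticity \eqref{hyp:UnEl} gives $A^2\geq c$, the function $\phi(x):=\int_0^x \frac{dy}{A(y)}$ is a $C^3$ diffeomorphism of $\mathbb{R}$, with $\phi'=1/A$ bounded above and below by positive constants. Setting $Y_t:=\phi(X_t)$, an application of It\^o's formula yields
\begin{equation*}
dY_t = \tilde B(Y_t)\,dt + dW_t,\qquad \tilde B(y):=\frac{B(\phi^{-1}(y))}{A(\phi^{-1}(y))}-\frac{A'(\phi^{-1}(y))}{2},
\end{equation*}
and $\tilde B\in C^1_b(\mathbb{R},\mathbb{R})$ under \eqref{h1h2}--\eqref{hyp:UnEl}. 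Because $\phi$ is strictly increasing, $N_t:=\sup_{s\leq t}Y_s=\phi(M_t)$, so $(N_t,Y_t)$ is exactly the pair associated to the transformed one-dimensional diffusion $Y$ with unit diffusion coefficient. Theorem \ref{existence-dens} then applies to $(N,Y)$, giving both the validity of Hypothesis \ref{hyp-dens-outil} for $p_{(N,Y)}$ and the corresponding PDE formula.

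Next I transfer Hypothesis \ref{hyp-dens-outil} from $(N,Y)$ to $(M,X)$. The densities are linked by the Jacobian relation
\begin{equation*}
p_V(m,x;t)=\frac{p_{(N,Y)}(\phi(m),\phi(x);t)}{A(m)\,A(x)},
\end{equation*}
and analogously $p_X(x;t)=p_Y(\phi(x);t)/A(x)$. Since $A$ and $1/A$ are bounded, the change of variables $n=\phi(m)$, $v=\phi(m)-\phi(m-u)$ turns the integrability condition (i) for $p_{(N,Y)}$ into the same condition for $p_V$ (up to multiplicative constants). Condition (ii) transfers immediately because $\phi$ and $A$ are continuous.

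Finally, given $F\in C^2_b(\mathbb{R}^2,\mathbb{R})$, I set $G(n,y):=F(\phi^{-1}(n),\phi^{-1}(y))$. Since $(\phi^{-1})'=A\circ\phi^{-1}$ and $(\phi^{-1})''=(AA')\circ\phi^{-1}$ are bounded, $G\in C^2_b(\mathbb{R}^2,\mathbb{R})$, and $G(N_t,Y_t)=F(M_t,X_t)$. Applying Theorem \ref{existence-dens} to $G$ and $(N,Y)$, I rewrite each term on the right-hand side in terms of $F$, $M$, $X$. A direct computation using $\tilde B(\phi(x))\,A(x)=B(x)-\frac{1}{2}A(x)A'(x)$ shows
\begin{equation*}
\bigl(\tilde B(y)\partial_y + \tfrac12\partial_y^2\bigr)G(\phi(m),\phi(x)) = B(x)\partial_xF(m,x)+\tfrac12 A^2(x)\partial_x^2F(m,x)=\mathcal L(F)(m,x).
\end{equation*}
For the boundary term, $\partial_mG(\phi(x),\phi(x))=A(x)\partial_mF(x,x)$, and the density ratio transforms as
\begin{equation*}
\frac{p_{(N,Y)}(\phi(x),\phi(x);s)}{p_Y(\phi(x);s)}=A(x)\,\frac{p_V(x,x;s)}{p_X(x;s)},
\end{equation*}
producing the announced factor $A^2(X_s)$. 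The main technical points to check carefully are thus the regularity of $\tilde B$ and $G$ (routine once \eqref{h1h2}--\eqref{hyp:UnEl} are in force), the positivity and boundedness of $\phi'$ needed to push Hypothesis \ref{hyp-dens-outil} through the change of variables, and the bookkeeping of Jacobians in the density ratio, which is where the factor $A^2$ emerges.
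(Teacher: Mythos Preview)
Your proposal is correct and follows the same Lamperti transformation as the paper's Section~\ref{sec5}: define $\phi$ with $\phi'=1/A$, pass to $Y=\phi(X)$ with unit diffusion, invoke Theorem~\ref{existence-dens}, and transport Hypothesis~\ref{hyp-dens-outil} back to $p_V$ via the Jacobian relation $p_V(m,x;t)=p_{(N,Y)}(\phi(m),\phi(x);t)/(A(m)A(x))$. The only cosmetic difference is that you obtain the final formula by rewriting the PDE for $G(N,Y)$ in the $(M,X)$ variables and tracking the Jacobians (which is where you see the $A^2$ factor explicitly), whereas the paper, once Hypothesis~\ref{hyp-dens-outil} is established for $(M,X)$, simply invokes Theorem~\ref{thme-sous-hyp-dens} directly with the original coefficients $A,B$; both routes are valid and equivalent.
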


\begin{rem}
\label{rem-wpde} If $p_V$ is regular enough, and if the initial law of $X_0$ satisfies  $\mu_0(dx)=f_0(x)dx$, then Theorem \ref{thme-sous-hyp-dens} means that $p_V$ is a weak solution in the set $\Delta$ of 
$
\partial_t p= \LL^*p 
$
where $\LL^*f=
 \demi \Sigma^{ij}\partial^2_{ij}f
 -[B^i-\partial_j(\Sigma^{ij})]
 \partial_if
 -\partial_iB^i-\demi\partial^2_{ij}
 (\Sigma^{ij}))f$
with boundary condition
{
\begin{equation}
\label{CB1}
B^1(m,\tilde x) p_V(m,m,\tilde x;s)=
\partial_{x_k}(\Sigma^{1,k} p_V)(m,m,\tilde x;s)+\demi {\partial_{m}(\|A^1\|^2} p_V)(m,m,\tilde x;s).
\end{equation}
   This result is proved in Appendix
    \ref{ap3}

}
This boundary condition  also   appears  in   Proposition 4 Equation (11) of
 \cite{dorobantu} (Ornstein Uhlenbeck process).
 Finally,  a similar PDE is studied in  Chapter 1.2 of 
  \cite{GarMenal}
   {where the authors have}
    established the existence of a unique strong solution of this PDE,
     but { in case of} a non degenerate elliptic 
     operator. 
\end{rem}

\section{ Proof of Theorem \ref{pro-sous-hyp-dens}}

\label{ApA}
{ We start this section with a road map of the proof of Theorem \ref{pro-sous-hyp-dens}.}
Firstly we compute the right derivative of the application $F_\Psi:t\mapsto \mathbb{E}[\int_0^t\Psi(M_s,X_s)dM_s],$ namely 
  $\lim_{h\to 0^+}T_{h,t}$ with 
$T_{h,t}=\frac{1}{h}
\mathbb{E}_{\mathbb{P}}[\int_t^{t+h}\psi(V_s)dM_s].$ A first step is the decomposition
\begin{align}
\label{Tht}
T_{h,t}=\frac{1}{h}\mathbb{E}_\mathbb{P}[\int_t^{t+h}(\psi(V_s)-\psi(V_t))dM_s]+\frac{1}{h}\mathbb{E}_\mathbb{P}[\psi(V_t)(M_{t+h}-M_t)].
\end{align}
{ Since $\psi\in C^1_b(\mathbb{R}^{d+1},\mathbb{R})$
 and the process $M$
is increasing, the first term in \eqref{Tht}, }
is  dominated by:
$$
\mathbb{E}\left[\int_t^{t+h}(\psi(V_s)-\psi(V_t))dM_s\right]\leq
\|\nabla\psi\|_\infty \mathbb{E}\left[\sup_{t\leq s\leq t+h}
\|V_s-V_t\|(M_{t+h}-M_t)\right].
$$

 Lemma \ref{maj-x-unif-h} states that $\sup_{t\leq s\leq t+h}\|X_s-X_t\|_p= {O(\sqrt h)} $ and Lemma  \ref{maj-sup-h} yields 
 \\ $\|M_{t+h}-M_t\|_p={o(\sqrt h)}$  so that
 that {this first   term} is an $o(h)$.
 
Concerning the second term in \eqref{Tht},  $M_{t+h}-M_t$ can be written as $\sup_{0\leq u\leq h} (X^1_{t+h}-X^1_t-M_t+X^1_t)_+$.
 In order to use the independence of the increments of Brownian motion 
 we introduce a new process, independent of $\F_t$, which is an approximation of $X^1_{t+u}-X^1_t:$
\begin{equation}
\label{X1tu}
X^1_{t,u}:=A^1_k(X_t)
\hat W^k_u\mbox{ where }
\hat W^k_u:=W^k_{t+u}-W^k_t~;~
{ M_{t,h}:=\sup_{0\leq u\leq h}X^1_{t,u}}.
\end{equation}
{Lemma 
\ref{di-sup-cv-0} (ii) 
 will set}
${
\mathbb{E}\left[|M_{t+h}-M_t-
 (M_{t,h}-M_t+X_t^1)_+|
 \right]=o(h),}$
  where $(x)_+=\max(x,0)$.
Thus 
\begin{eqnarray}
\frac{1}{h}\mathbb{E}[\psi(V_t)(M_{t+h}-M_t)]= \mathbb{E}[\psi(V_t)(M_{t,h}-M_t+X_t^1)_+]+o(h)
\end{eqnarray}
Remark that the law of $M_{t,h}$  given  $\F_t$
is the law of $\|A^1(X_t)\| \sup_{0\leq u\leq h} \hat W^1_u,$  then { using the function $\h$ \eqref{def-hm},} a $\F_t$ conditioning yields:
\begin{eqnarray}
\frac{1}{h}\mathbb{E}[\psi(V_t)(M_{t+h}-M_t)]=\frac { 2}{\sqrt h}\,\mathbb{E}\left[\Psi({ V_t}){ \|A^1(X_t)\|}\h(\frac{M_t-X^1_t}{\sqrt h \|A^1(X_t)\|})\right]
+o(h).
\end{eqnarray}
Then $T_{h,t}=\frac { 2}{\sqrt h}\,\mathbb{E}\left[\Psi({ V_t}){ \|A^1(X_t)\|}\h(\frac{M_t-X^1_t}{\sqrt h \|A^1(X_t)\|})\right]
+o(h)$
as it { appears in} Proposition \ref{proDiagm}
{ (ii)}.
\\
In Proposition  \ref{pro-forme-dens-phi-t},
 under Hypothesis \ref{hyp-dens-outil}, 
 we compute $\lim_{h\to 0}T_{h;t}.$

{ Finally} in Section \ref{Sec3}  we prove 
$F_\psi: t\mapsto E[\int_0^t \psi(V_s)dM_s]$
is an absolutely continuous function with respect to Lebesgue measure, integral of its right derivative. 
Actually we prove that $F_\psi$ is a continuous function belonging to the Sobolev space $W^{1,1}(I),~I=(0,T).$ 
This achieves the proof of Theorem \ref{pro-sous-hyp-dens}.

The main propositions to prove are

\begin{pro}
  \label{proDiagm}
{Let  $B$ and $A$  fulfil  \eqref{h1h2} and \eqref{hyp:UnEl} {and
let} $\Psi \in C_b^1({\mathbb R}^{d+1},{\mathbb R})$. { Recall  {that}  }{$A^1$ {  is} the vector $(A^1_j,j=1,...,d),$   } and $\|A^{1}(x)\|^2=\sum_{j=1}^d (A^{1}_j(x))^2$. 
\\
(i) { for all $T>0$,} there exists a constant $C>0,$ { (depending on $\|A\|_{\infty},$ $\|B\|_{\infty}$,  $ \|\nabla A\|_{\infty},$ $\|\Psi\|_{\infty} ,$ $\|\nabla \Psi\|_{\infty}$ and $T$)} such that
$\forall t\in [0,T],$ $h\in [0,1],$
{\footnotesize
\begin{equation}
\label{diagonalem}
 \left|\mathbb{E}\left[\int_t^{t+h}\Psi(V_s)dM_s-{ 2}\sqrt h \left(\Psi({ V_t}){ \|A^1(X_t)\|}\h(\frac{M_t-X^1_t}{\sqrt h \|A^1(X_t)\|})\right)\right]\right|\leq C h { \|\nabla \Psi\|_\infty},
\end{equation}
}}
(ii)  for all $t>0,~h\in [0,1],$ 
 $$\lim_{h\to 0+}\frac{1}{h} \left|\mathbb{E}\left[\int_t^{t+h}\Psi(V_s)dM_s\right]-{ 2}\sqrt h\mathbb{E}\left[\Psi({ V_t}){ \|A^{1}({ X_t})\|}\h(\frac{M_t-X^1_t}{\sqrt h \|A^{1}({ X_t})\|})\right]\right|=0,$$
 where, denoting by   $\Phi_G$   the standard Gaussian cumulative distribution function,  
{\begin{align}
\label{def-hm}
\h(\theta):=\int_\theta^{\infty} \frac{1}{\sqrt{2\pi}} (y-\theta)e^{-\frac{y^2}{2}} dy=\frac{e^{-\frac{\theta^2}{2}} }{\sqrt{2\pi}}-\theta\Phi_{G}( -\theta).
\end{align}}
\end{pro}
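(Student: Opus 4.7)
The plan is to prove (i) by the decomposition already sketched in the roadmap of Section \ref{ApA}, and to obtain (ii) by sharpening each $O(h)$ bound into an $o(h)$ bound. Writing
\[
\int_t^{t+h}\Psi(V_s)\,dM_s = \int_t^{t+h}\bigl(\Psi(V_s)-\Psi(V_t)\bigr)\,dM_s + \Psi(V_t)(M_{t+h}-M_t),
\]
I handle the expectations of the two summands in turn.

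For the first summand, the Lipschitz estimate $|\Psi(V_s)-\Psi(V_t)|\le \|\nabla\Psi\|_\infty\|V_s-V_t\|$ combined with the monotonicity of $M$ gives
\[
\Bigl|\mathbb E\!\int_t^{t+h}\!(\Psi(V_s)-\Psi(V_t))\,dM_s\Bigr|\le \|\nabla\Psi\|_\infty\,\mathbb E\Bigl[\sup_{s\in[t,t+h]}\|V_s-V_t\|\cdot(M_{t+h}-M_t)\Bigr].
\]
Cauchy--Schwarz, Lemma \ref{maj-x-unif-h} (supplying $\|\sup_{s\in[t,t+h]}\|X_s-X_t\|\|_2=O(\sqrt h)$) and Lemma \ref{maj-sup-h} (supplying $\|M_{t+h}-M_t\|_2=o(\sqrt h)$) then yield an $O(h)\|\nabla\Psi\|_\infty$ bound, in fact $o(h)\|\nabla\Psi\|_\infty$, with constants depending on the quantities listed in the statement.

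For the second summand the crucial manoeuvre is to replace $M_{t+h}-M_t$ by a functional of the increment $\hat W_\cdot=W_{t+\cdot}-W_t$, which is independent of $\mathcal F_t$. Since $M$ only grows above its current level,
\[
M_{t+h}-M_t = \Bigl(\sup_{0\le u\le h}(X^1_{t+u}-X^1_t)-(M_t-X^1_t)\Bigr)_+,
\]
and I approximate $X^1_{t+u}-X^1_t$ by its Gaussian linearisation $X^1_{t,u}=A^1_k(X_t)\hat W^k_u$ from \eqref{X1tu}. Lemma \ref{di-sup-cv-0}(ii) is precisely the freezing estimate
\[
\mathbb E\bigl[\,|M_{t+h}-M_t-(M_{t,h}-(M_t-X^1_t))_+|\,\bigr]=o(h),
\]
so, multiplied by the bounded factor $\Psi(V_t)$, it contributes an $o(h)$ error, bounded by $Ch\|\Psi\|_\infty$ as required in (i).

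It remains to compute $\mathbb E[\Psi(V_t)(M_{t,h}-(M_t-X^1_t))_+]$ in closed form. Conditionally on $\mathcal F_t$, the process $u\mapsto X^1_{t,u}$ is a continuous centred Gaussian martingale with bracket $\|A^1(X_t)\|^2 u$, hence $M_{t,h}$ has the same conditional law as $\|A^1(X_t)\|\sup_{0\le u\le h}\tilde W_u$ for an auxiliary standard Brownian motion $\tilde W$. Setting $\sigma=\|A^1(X_t)\|$ and $a=M_t-X^1_t\ge 0$, Brownian scaling and $\sup_{0\le u\le 1}\tilde W_u\stackrel{d}{=}|N|$ with $N\sim\mathcal N(0,1)$ give
\[
\mathbb E\bigl[(M_{t,h}-a)_+\mid\mathcal F_t\bigr] = \sigma\sqrt h\int_{a/(\sigma\sqrt h)}^\infty\!\bigl(y-\tfrac{a}{\sigma\sqrt h}\bigr)\sqrt{\tfrac{2}{\pi}}\,e^{-y^2/2}\,dy = 2\sigma\sqrt h\,\mathcal H\!\bigl(\tfrac{a}{\sigma\sqrt h}\bigr),
\]
directly from the definition \eqref{def-hm} of $\mathcal H$. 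Taking outer expectation against $\Psi(V_t)$ reproduces the second term in \eqref{diagonalem}; combining the two summands gives (i), and the sharper $o(h)$ controls above give (ii). The hardest step is precisely the freezing inequality of Lemma \ref{di-sup-cv-0}(ii): upgrading the naive $O(h)$ bound on the supremum of a frozen vs.\ unfrozen diffusion increment to an $o(h)$ bound is what drives the whole proposition; everything else is a Lipschitz or conditional-Gaussian calculation.
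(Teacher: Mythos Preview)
Your proof is correct and follows essentially the same route as the paper: the same two-term decomposition, the same use of Lemmas \ref{maj-x-unif-h}, \ref{maj-sup-h}, \ref{di-sup-cv-0} and the conditional-Gaussian computation of Lemma \ref{pro:calcul-esp-cond}. One small point: for the uniform bound in (i) on the freezing error you should invoke Lemma \ref{di-sup-cv-0}(i) (the $O(h)$ bound uniform in $t\in[0,T]$), not only part (ii), since the $o(h)$ of (ii) is only pointwise in $t$.
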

The following remark { will be useful}:
 \begin{rem}
\label{remm}
The definition of $\h $ in  \eqref{def-hm}  implies that
 $\int_0^\infty \h(u)du=1/4$
 { Moreover, $\h'(\theta)=
 -\Phi_G(-\theta)\leq 0,$ {in particular}   $\h$ is non increasing.
}
\end{rem}

{ \begin{pro}
  \label{pro-forme-dens-phi-t}
	Assume that $A$ and $B$ fulfil  \eqref{h1h2} and \eqref{hyp:UnEl} and {that} $(M,X)$ fulfils Hypothesis \ref{hyp-dens-outil}, then for all $\Psi \in C^1_b({\mathbb R}^{d+1},{\mathbb R})$,
	for all $0 <T$  and {for all} $t\geq 0:$
	\begin{align*}
(i)	&t\mapsto \sup_{h >0}\frac{{ 2}\sqrt h}{h}\,\mathbb{E}\left[\Psi(V_t){ \|A^1(X_t)\|}\h(\frac{M_t-X^1_t}{\sqrt h \|A^1(X_t)\|})\right] \in L^1([0,T],{\mathbb R}),
	\\
	(ii)&
	\lim_{h \rightarrow 0^+} \frac{{ 2}\sqrt h}{h}\,\mathbb{E}\left[\Psi(V_t){ \|A^1(X_t)\|}\h(\frac{M_t-X^1_t}{\sqrt h \|A^1(X_t)\|})\right]
	\\
	&=\demi\int_{{\mathbb R}^d} \Psi(m,m,\tilde{x})\|A^1(m,\tilde{x})\|{ ^2}p_V(m,m,\tilde{x};t)dmd\tilde{x}
	\end{align*}
 As a corollary, {the  function
$t\rightarrow \demi\int_{{\mathbb R}^d} \Psi(m,m,\tilde{x}){\|A^1(m,\tilde{x})\|^2}p_V(m,m,\tilde{x};t)dmd\tilde{x}$} belongs to $ L^1([0,T],\mathbb{R}).
$
	\end{pro}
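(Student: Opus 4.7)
The plan is to rewrite $I(h,t) := \frac{2\sqrt{h}}{h}\mathbb{E}[\Psi(V_t)\|A^1(X_t)\|\h((M_t-X^1_t)/(\sqrt{h}\|A^1(X_t)\|))]$ as an integral against the density $p_V$, perform a double change of variables that exposes the diagonal scaling, and then pass to the limit by dominated convergence using Hypothesis \ref{hyp-dens-outil}. Writing the expectation as an integral on $\bar{\Delta}$, substituting $u := m-x^1 \geq 0$ (keeping $m$ and $\tilde{x}$ fixed, so $x^1 = m-u$), and rescaling $u = \sqrt{h}\,w$, the $1/\sqrt{h}$ prefactor is cancelled exactly by the $\sqrt{h}$-Jacobian, producing
\begin{align*}
I(h,t) = 2\int_{\mathbb{R}^d}\!\int_0^\infty & \Psi(m,m-\sqrt{h}w,\tilde{x})\,\|A^1(m-\sqrt{h}w,\tilde{x})\| \\
&\times \h\!\left(\tfrac{w}{\|A^1(m-\sqrt{h}w,\tilde{x})\|}\right) p_V(m,m-\sqrt{h}w,\tilde{x};t)\,dw\,dm\,d\tilde{x}.
\end{align*}

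For part (i) I would dominate the integrand uniformly in $h$. Since $\h$ is non-increasing (Remark \ref{remm}) and $\|A^1\| \leq \|A\|_\infty$, one has $\h(w/\|A^1\|) \leq \h(w/\|A\|_\infty)$, whose $w$-integral equals $\|A\|_\infty/4$ by Remark \ref{remm}. Combined with the trivial pointwise bound $p_V(m,m-\sqrt{h}w,\tilde{x};t) \leq \sup_{u>0} p_V(m,m-u,\tilde{x};t)$, this gives
\begin{equation*}
\sup_{h>0}|I(h,t)| \;\leq\; \tfrac{1}{2}\|\Psi\|_\infty\|A\|_\infty^2 \int_{\mathbb{R}^d} \sup_{u>0} p_V(m,m-u,\tilde{x};t)\,dm\,d\tilde{x},
\end{equation*}
whose right-hand side belongs to $L^1([0,T])$ as a function of $t$ by Hypothesis \ref{hyp-dens-outil}(i) and Fubini; this proves (i).

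For part (ii), continuity of $\Psi$ and $A$ (via \eqref{h1h2}) together with Hypothesis \ref{hyp-dens-outil}(ii) yield pointwise convergence of the integrand as $h \to 0^+$ to $2\Psi(m,m,\tilde{x})\|A^1(m,\tilde{x})\|\h(w/\|A^1(m,\tilde{x})\|)p_V(m,m,\tilde{x};t)$ for a.e. $(w,m,\tilde{x})$. The bound from part (i) serves as dominating function, so dominated convergence applies. A final substitution $v = w/\|A^1(m,\tilde{x})\|$ (now legitimate because $\|A^1\|$ no longer depends on $w$) together with the identity $\int_0^\infty \h(v)\,dv = 1/4$ from Remark \ref{remm} reduces the limit to $\tfrac{1}{2}\int_{\mathbb{R}^d}\Psi(m,m,\tilde{x})\|A^1(m,\tilde{x})\|^2 p_V(m,m,\tilde{x};t)\,dm\,d\tilde{x}$, which is (ii). The corollary on $L^1$-integrability of the limit is then immediate, since the limiting function of $t$ is pointwise dominated by $\sup_{h>0} I(h,t) \in L^1([0,T])$.

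The only subtle point is committing to the $\sqrt{h}$-rescaling $u = \sqrt{h}w$: it is precisely this choice that cancels the $1/\sqrt{h}$ prefactor, turns the argument of $\h$ into the scale-invariant quantity $w/\|A^1\|$, and positions $p_V(m,m-\sqrt{h}w,\tilde{x};t)$ so that Hypothesis \ref{hyp-dens-outil} directly supplies both the domination (via (i)) and the pointwise limit (via (ii)); the boundary-value density $p_V(m,m,\tilde{x};t)$ itself never needs to be controlled directly.
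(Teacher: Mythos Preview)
Your proof is correct and follows essentially the same route as the paper's: the same change of variable $x^1 = m - \sqrt{h}\,w$ to cancel the $1/\sqrt{h}$ prefactor, the same domination via monotonicity of $\h$ combined with Hypothesis~\ref{hyp-dens-outil}(i), and the same dominated-convergence argument for (ii) followed by the scalar substitution $v = w/\|A^1(m,\tilde{x})\|$ and Remark~\ref{remm}. The only cosmetic difference is that you split the substitution into two steps ($u=m-x^1$, then $u=\sqrt{h}w$) whereas the paper does it in one.
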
}
	\noindent
	    The proof of Proposition \ref{proDiagm}
{will be} obtained  with the lemmas in the following section.

\subsection{Tools for proving Proposition \ref{proDiagm}}

Here we provide some estimations  of the expectations { of the increments of the
 processes $X$ and $M$}.
 { Assumptions  \eqref{h1h2} and \eqref{hyp:UnEl} allow us to introduce a constant  $K$ which denotes} either $ \max(\|A\|_{\infty},\|B\|_{\infty})$
or  $ \max(\|A\|_{\infty},\|B\|_{\infty},$
$\|\nabla A\|_\infty).$
Let $C_p$ be the   {constant in the   Burkholder-Davis-Gundy} inequality (cf. Theorem B.36 in \cite{BainCrisan}).

\begin{lem}
\label{maj-x-unif-h}
Let $A$ and $B$ be bounded. {Then,} {for all} $ 0<h\leq 1,$ {{for all} $ p\geq 1$} there exists a constant $C_{p,K}$
(depending only on $p$ and $K$) such that:
\begin{align*}
{ \sup_{t>0}}~\mathbb{E} \left[ \sup_{0\leq s \leq h}\| X_{t+s} -X_t\|^p \right] \leq 
C_{p,K} h^{p/2}
.
\end{align*}
\end{lem}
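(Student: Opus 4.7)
The plan is to bound the increment $X_{t+s}-X_t$ by splitting the SDE \eqref{modelm} into its drift and martingale parts and then estimating each separately. Writing
\[
X_{t+s}-X_t = \int_t^{t+s} B(X_u)\,du + \int_t^{t+s} A(X_u)\,dW_u,
\]
I would apply the elementary inequality $\|a+b\|^p \leq 2^{p-1}(\|a\|^p+\|b\|^p)$ after taking the supremum over $s\in[0,h]$, so that it suffices to control the two terms independently.

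For the drift term, I would use that $B$ is bounded by $K$ to get, pathwise,
\[
\sup_{0\leq s\leq h}\Bigl\|\int_t^{t+s} B(X_u)\,du\Bigr\|^p \leq (Kh)^p \leq K^p\, h^{p/2},
\]
where the last step uses $h\leq 1$. For the stochastic integral, I would treat first the case $p\geq 2$ using the Burkholder-Davis-Gundy inequality (with constant $C_p$, as recalled from \cite{BainCrisan}):
\[
\mathbb{E}\Bigl[\sup_{0\leq s\leq h}\Bigl\|\int_t^{t+s} A(X_u)\,dW_u\Bigr\|^p\Bigr]\leq C_p\,\mathbb{E}\Bigl[\Bigl(\int_t^{t+h}\|A(X_u)\|^2\,du\Bigr)^{p/2}\Bigr]\leq C_p K^p h^{p/2}.
\]
Adding the two bounds yields the claimed estimate with a constant $C_{p,K}$ depending only on $p$ and $K$, uniformly in $t>0$ since neither bound depends on the starting time.

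For $p\in[1,2)$, I would deduce the result from the $p=2$ case by Jensen's inequality applied to the concave function $x\mapsto x^{p/2}$ on $[0,\infty)$:
\[
\mathbb{E}\bigl[\sup_{0\leq s\leq h}\|X_{t+s}-X_t\|^p\bigr] \leq \bigl(\mathbb{E}\bigl[\sup_{0\leq s\leq h}\|X_{t+s}-X_t\|^2\bigr]\bigr)^{p/2}\leq (C_{2,K})^{p/2} h^{p/2}.
\]
No step is really an obstacle here: the estimate is a textbook consequence of BDG for SDEs with bounded coefficients, and the only minor care needed is to absorb the factor $h^p$ from the drift into $h^{p/2}$ using the restriction $h\leq 1$, and to handle the range $p<2$ separately via Jensen.
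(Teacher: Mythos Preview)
Your proof is correct and follows essentially the same route as the paper: split the increment into drift and martingale parts, bound the drift pathwise using $\|B\|_\infty\leq K$ and $h\leq 1$, and apply the Burkholder--Davis--Gundy inequality to the stochastic integral. The only minor difference is that the paper applies BDG directly for all $p\geq 1$ rather than treating $p\in[1,2)$ separately via Jensen, so your case split is harmless but unnecessary.
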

\begin{proof}
Using the fact that $(a+b)^p \leq 2^{p-1} \left[a^p +b^p \right],~a,b\geq 0$, one obtains:
\begin{align*}
0\leq \sup_{s \leq h} \| X_{t+s}-X_t \|^p \leq 2^{p-1}
\left[\sup_{u \leq h} \left( \|\int_t^{t+u} B(X_s) ds\| \right)^p+
 \sup_{u \leq h}\left( \|\int_t^{t+u} A_j(X_s) dW^j_s \|\right)^p  \right].
\end{align*}
{Taking  expectation of both members, {the  Burkholder-Davis-Gundy} inequality implies
$$\mathbb{E}[\sup_{s \leq h} \| X_{t+s}-X_t \|^p] \leq 2^{p-1}(1+C_p)
\mathbb{E}\left[ \left( \int_{t}^{t+h} \|B(X_s) \|ds \right)^p+\left( \int_t^{t+h} \|A(X_s)\|^2 ds\right)^{p/2}  \right].
$$
{ Assumption \eqref{h1h2} on} $B$ and $A$ yields
$\mathbb{E}[\sup_{s \leq h}{\| X_{t+s}-X_t \|^p}] \leq 2^{p-1}(1+C_p)(h^{p}K^p+ h^{p/2}
 K^p).
$
}
\end{proof}

\begin{lem}
\label{maj-sup-h}
{ Let  $B$ and $A$ satisfy Assumptions \eqref{h1h2} and \eqref{hyp:UnEl}.} Then, {for all} $ 0<h\leq 1,$ {for all} $ p\geq 1$ we get:
\begin{equation}
\label{majMinc}
{ \sup_{t>0}}~\mathbb{E}[| M_{t+h} -M_t|^p ]\leq
 C_{p,K}h^{p/2}~;~
\mathbb{E}[ | M_{t+h} -M_t|^p ]=o(h^{p/2}).
\end{equation}
\end{lem}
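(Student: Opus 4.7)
The plan is to prove both bounds starting from the elementary pathwise comparison between the increment of $M$ and that of $X^1$, and then upgrade the first bound to $o(h^{p/2})$ via a Cauchy--Schwarz refinement that uses the fact that the increment is supported on a vanishingly small event.

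First, since $M_t\ge X^1_t$ and $M_{t+h}=\max(M_t,\sup_{s\in[t,t+h]} X^1_s)$, I would write
\[
0\le M_{t+h}-M_t=\Big(\sup_{s\in[t,t+h]} X^1_s-M_t\Big)_+ \le \Big(\sup_{s\in[t,t+h]} X^1_s-X^1_t\Big)_+ \le \sup_{s\in[t,t+h]}|X^1_s-X^1_t|.
\]
Raising to the power $p$, taking expectations, and applying Lemma~\ref{maj-x-unif-h} to the first coordinate yields the first estimate in \eqref{majMinc}, with the correct uniformity in $t$.

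For the sharper statement, I would exploit that the increment vanishes off the event $A_h:=\{M_{t+h}>M_t\}$. Since $M$ is non-decreasing,
\[
\mathbb{E}\big[|M_{t+h}-M_t|^p\big]=\mathbb{E}\big[|M_{t+h}-M_t|^p\mathbf{1}_{A_h}\big]\le \sqrt{\mathbb{E}\big[|M_{t+h}-M_t|^{2p}\big]}\,\sqrt{\mathbb{P}(A_h)}
\]
by Cauchy--Schwarz, and the first factor is $O(h^{p/2})$ by the bound just proved (used with exponent $2p$). It therefore suffices to show $\mathbb{P}(A_h)\to 0$ as $h\to 0^+$. For fixed $\varepsilon>0$, I would use the inclusion
\[
A_h\subseteq\{M_t-X^1_t\le\varepsilon\}\cup\Big\{\sup_{s\in[t,t+h]}|X^1_s-X^1_t|\ge\varepsilon\Big\},
\]
valid because on $\{M_t-X^1_t>\varepsilon\}$ any strict increase of $M$ on $[t,t+h]$ requires $X^1$ to move by more than $\varepsilon$. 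Markov's inequality combined with Lemma~\ref{maj-x-unif-h} sends the second probability to $0$ with $h$, so $\limsup_{h\to 0}\mathbb{P}(A_h)\le \mathbb{P}(M_t-X^1_t\le\varepsilon)$. Letting $\varepsilon\downarrow 0$ leaves the residual $\mathbb{P}(M_t=X^1_t)$.

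The only delicate ingredient is vanishing of this residual. This is not a consequence of standard SDE moment estimates; it relies on the fact that, under \eqref{h1h2}--\eqref{hyp:UnEl}, the law of $V_t=(M_t,X_t)$ admits a density with respect to the $(d+1)$-dimensional Lebesgue measure (cf.~\cite{LaureMo}), and the diagonal $\{m=x^1\}$ carries Lebesgue measure zero. Thus $\mathbb{P}(M_t=X^1_t)=0$, concluding $\mathbb{P}(A_h)\to 0$ and hence $\mathbb{E}[|M_{t+h}-M_t|^p]=o(h^{p/2})$. The main subtlety of the whole argument is this final step, since upgrading the $O(h^{p/2})$ rate to $o(h^{p/2})$ cannot be done with path estimates alone and genuinely requires the non-atomicity of $V_t$ on the diagonal imported from the companion paper.
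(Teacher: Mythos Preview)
Your proof is correct and follows essentially the same route as the paper: both rely on the pathwise bound $0\le M_{t+h}-M_t\le\sup_{s\in[t,t+h]}|X^1_s-X^1_t|$ together with Lemma~\ref{maj-x-unif-h} for the $O(h^{p/2})$ estimate, and a Cauchy--Schwarz splitting that isolates the probability of the event $\{M_{t+h}>M_t\}=\{\sup_{u\le h}(X^1_{t+u}-X^1_t)>M_t-X^1_t\}$, shown to vanish via the density of $(M_t,X_t)$ from \cite{LaureMo}. The paper's version is marginally more compact (it obtains both claims from a single Cauchy--Schwarz inequality rather than treating them in two stages), but the ingredients and the key non-trivial input---$\mathbb{P}(M_t=X^1_t)=0$---are identical.
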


\begin{proof}
Recall
$
M_{t+h}-M_t= \left(  \sup_{0 \leq u \leq h} (X_{ t+u}^1 - X_t^1)+X_t^1 - M_t  \right)_+
$
recalling $(x)_+=\max(x,0).$
For any $a \geq 0,$  {one has} ${( x-a)_+} \leq |x|{{\mathbf 1}_{\{x >a\}}},$ thus
\begin{align*}
0 \leq M_{t+h}-M_t\leq |  \sup_{0 \leq u \leq h} (X_{ t+u}^1- X_t^1)| {\mathbf 1}_{ \{  \sup_{0 \leq u \leq h} (X_{ t+u}^1- X_t^1)> M_t-X_t^1\}}.
\end{align*}
 {Cauchy-Schwartz's} inequality yields:
\begin{align*}
0 \leq \mathbb{E} \left[ \left( M_{t+h}-M_t\right)^p \right]\leq
 \sqrt{ \mathbb{E}\left[|  \sup_{0 \leq u \leq h} (X_{ t+u}^1- X_t^1)|^{2p} \right] {\mathbb P} ( \{  \sup_{0 \leq u \leq h} (X_{t+u}^1- X_t^1)> M_t-X_t^1\})}.
\end{align*}
{ Replacing $p$ by $2p$ in
Lemma {\ref{maj-x-unif-h}} leads to the inequality in
 \eqref{majMinc} and {the equality}
 \\
 $\lim_{h\rightarrow 0}\sup_{0 \leq u \leq h} ({ X_{t+u}^1}- X_t^1)=0$ {holds almost surely.}
 According to Theorem 1.1  {in} \cite{LaureMo}  { extended to $X_0$ with law $\mu_0$
 on $\mathbb{R}^d$, }
the pair $(M_t,X_t)$ admits a density,
thus ${\mathbb P}  \{  M_t-X_t^1=0\}=0$ holds almost surely.
Therefore  $E \left( \left[ M_{t+h}-M_t\right]^p \right)$ is {  bounded}
by the  product of $h^{p/2}$ and a factor going 
to zero {when $h$ goes to 0,} and   {this quantity} is an $o(h^{p/2})$.}
\end{proof}

For any  fixed $t$ we { recall}  the process
{ $(X_{t,u},~~u \in [0,h])$}  and the running maximum of its first component {as follows}:
\begin{align}
\label{euler-approx}
 X_{t,u} := {  \sum_j A_j(X_t)} \hat W^j_u,~M_{t,h}: =\sup _{ 0 \leq u \leq h} X_{t,u}^1.
\end{align}
\begin{lem}
\label{lem-maj-dif-sup}
Under Assumptions \eqref{h1h2} and \eqref{hyp:UnEl}, {for all} {$  p\geq 1$ }
there exists a constant $C_{p,K}$ such that
 such that {for all} $  t\leq T$, {for all} $ h \in [0,1]$:
\begin{align*}
\mathbb{E} \left[ \sup_{s \leq h} | X^1_{s+t}-X^1_t - X^1_{t,s}|^p \right]\leq
C_{p,K}h^{p}.
\end{align*}

\end{lem}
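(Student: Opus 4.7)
The plan is to decompose the difference $X^1_{t+s}-X^1_t-X^1_{t,s}$ into a drift term and a stochastic integral, and then control each term separately. From \eqref{modelm} we have
\begin{equation*}
X^1_{t+s}-X^1_t-X^1_{t,s} = \int_t^{t+s} B^1(X_u)\,du + \int_t^{t+s}\bigl[A^1_k(X_u)-A^1_k(X_t)\bigr]\,dW^k_u,
\end{equation*}
so by the triangle inequality applied before taking the sup and then $p$-th moments, it suffices to bound the two resulting terms.

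For the drift term, Assumption \eqref{h1h2} gives $\|B\|_\infty \leq K$, so pathwise
\begin{equation*}
\sup_{s\leq h}\left|\int_t^{t+s} B^1(X_u)\,du\right|^p \leq K^p h^p,
\end{equation*}
which already has the right order. For the martingale term, I would apply the Burkholder–Davis–Gundy inequality (with constant $C_p$) to get
\begin{equation*}
\mathbb{E}\!\left[\sup_{s\leq h}\left|\int_t^{t+s}[A^1_k(X_u)-A^1_k(X_t)]dW^k_u\right|^p\right] \leq C_p\,\mathbb{E}\!\left[\left(\int_t^{t+h}\|A^1(X_u)-A^1(X_t)\|^2 du\right)^{p/2}\right].
\end{equation*}
Then, using the Lipschitz continuity of $A^1$ provided by $A\in C^2_b$ (so $\|\nabla A\|_\infty\leq K$), the integrand is bounded by $K^2\|X_u-X_t\|^2$, and pulling the supremum out of the time integral gives the bound
\begin{equation*}
C_p K^p h^{p/2}\,\mathbb{E}\!\left[\sup_{u\leq h}\|X_{t+u}-X_t\|^p\right].
\end{equation*}

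At this point I would invoke Lemma \ref{maj-x-unif-h}, which yields $\mathbb{E}[\sup_{u\leq h}\|X_{t+u}-X_t\|^p]\leq C_{p,K}h^{p/2}$; multiplied by $h^{p/2}$ from the previous step this produces the desired $h^p$ rate uniformly in $t\in[0,T]$. Combining this with the $O(h^p)$ bound on the drift part and using $(a+b)^p\leq 2^{p-1}(a^p+b^p)$ closes the argument. I do not expect any real obstacle: the decomposition is immediate, BDG plus Lipschitz is standard, and Lemma \ref{maj-x-unif-h} is already available. The only point requiring mild care is tracking the constants so that the final $C_{p,K}$ depends only on $p$, $K$, and the BDG constant, and is independent of $t$.
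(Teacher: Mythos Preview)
Your proposal is correct and follows essentially the same approach as the paper: the same drift/martingale decomposition, the same $O(h^p)$ bound on the drift, BDG plus the Lipschitz property of $A^1$ on the stochastic integral, and then Lemma~\ref{maj-x-unif-h} to close. The only cosmetic difference is that you pull the supremum outside the time integral to get $h^{p/2}\mathbb{E}[\sup_{u\leq h}\|X_{t+u}-X_t\|^p]$, whereas the paper applies Jensen's inequality to obtain $h^{p/2-1}\int_0^h \mathbb{E}\|X_{t+u}-X_t\|^p\,du$; both routes land on the same $h^p$ rate.
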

\begin{proof}
By definition, { recalling $\hat{W}_u:= W_{t+u} - W_t,~~u \geq 0,$} we obtain
\begin{align*}
X^1_{s+t} -X^1_t - X^1_{t,s} = { \int_0^s} {  B^1(X_{u+t})}du +
{ \int_0^s}  \left[ A^1(X_{u+t}) - A^1(X_t)\right] d{ \hat{W}}_u.
\end{align*}
Using once again $(a+b)^p \leq 2^{p-1}(a^p+b^p ),~a,b\geq 0,$ {we get}
{\small
\begin{align*}
&\sup_{0\leq s \leq h} | X^1_{s+t}-X^1_t - X^1_{t,s}|^p \leq
\\
& 2^{p-1}
\left[\left( { \int_0^h}  { \|B^1(X_{u+t})\| } du\right)^p  +\sup_{0\leq s \leq h}
 \left\| { \int_0^s} \left[ A^1(X_{u+t}) - A^1(X_t)\right] d{ \hat{W}}_u\right\|^p \right].
 \end{align*}
 }
 Taking expectation of both sides and applying {the Burkholder-Davis-Gundy} inequality yield { with $D_p=2^{p-1}(1+ C_p)$}:
{\footnotesize
$$
\mathbb{E} \left[\sup_{0\leq s \leq h} | X^1_{s+t}-X^1_t - X^1_{t,s}|^p \right]
 \leq D_p
\left(\mathbb{E}\left[{ \int_0^h}
 { \|B^1(X_{u+t}) \| }du\right]^p
 +\mathbb{E}  \left| { \int_0^h}  \|A^1(X_{u+t}) - A^1(X_t)\|^2 du\right|^{p/2}\right). $$}
{ The first term above is bounded by $K^ph^p$ since $B$ is bounded. The assumption that $A$ belongs to $C^1_b(\mathbb{R}^d,\mathbb{R}^{d\times d})$
  and  Jensen's inequality imply {that} the second term is {bounded} by
  $K^ph^{p/2-1}{\int_0^h}E\|X_{u+t}-X_t\|^pdu$ thus}
$$
\mathbb{E}\left[\sup_{0\leq s \leq h} | X^1_{s+t}-X^1_t - X^1_{t,s}|^p \right]
 \leq D_p{  K^ph^{p/2-1}\left(h^{p/2+1} +
 { \int_0^h}\mathbb{E} \|X_{u+t}- X_t\|^pdu\right).}
$$
 From Lemma \ref{maj-x-unif-h} { we obtain the uniform upper bound:} 
 $\mathbb{E} [\|X_{u+t}- X_t\|^p]\leq 
  C_{p,K} u^{p/2}$ hence
$$
\mathbb{E} \left[\sup_{s \leq h}
 | X^1_{s+t}-X^1_t - X^1_{t,s}| ^p \right]
 \leq
\frac{D_pK^pC_{p,K}}{\frac{p}{2}+1}h^p.
$$
\end{proof}

\begin{lem}
\label{di-sup-cv-0}
Under Assumptions  { \eqref{h1h2} and (\ref{hyp:UnEl})}, one has
 \begin{align*}
&(i)~{ \exists C>0}~~\sup_{ 0 \leq t \leq T; ~~~0 \leq h \leq 1}  h^{-1}\mathbb{E}
 \left[\left| M_{t+h}- M_t - \left( M_{t,h} - M_t + X_t^1\right)_+ \right|\right] \leq C<\infty,
\\
&(ii)~~\lim_{h \rightarrow 0^+} h^{-1}\mathbb{E}\left[ \left| M_{t+h}- M_t -
\left( M_{t,h} - M_t + X_t^1\right)_+ \right|\right]=0.
\end{align*}
\end{lem}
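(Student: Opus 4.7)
The plan is to exploit the explicit representations
\[
M_{t+h} - M_t = (S_h - R_t)_+, \qquad (M_{t,h} - M_t + X_t^1)_+ = (\tilde{S}_h - R_t)_+,
\]
where I set $S_h := \sup_{0 \leq u \leq h}(X^1_{t+u} - X^1_t)$, $\tilde{S}_h := \sup_{0 \leq u \leq h} X^1_{t,u}$, and $R_t := M_t - X_t^1 \geq 0$. Since $x \mapsto x_+$ is $1$-Lipschitz and $|\sup f - \sup g| \leq \sup|f-g|$, one gets the pointwise bound
\[
\left| M_{t+h} - M_t - (M_{t,h} - M_t + X_t^1)_+ \right| \leq |S_h - \tilde{S}_h| \leq \sup_{0 \leq u \leq h}\left|X^1_{t+u} - X^1_t - X^1_{t,u}\right|.
\]

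For assertion (i), I would take expectations on both sides and invoke Lemma \ref{lem-maj-dif-sup} with $p=1$: this yields an upper bound of the form $C_{1,K} h$ that is uniform in $t \in [0,T]$ and $h \in [0,1]$, so dividing by $h$ produces precisely the claimed uniform bound.

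For assertion (ii), the previous estimate is only $O(h)$ and a refinement is required. The observation is that both $(S_h - R_t)_+$ and $(\tilde{S}_h - R_t)_+$ vanish on the event $\{\max(S_h, \tilde{S}_h) \leq R_t\}$, so the earlier bound can be sharpened to
\[
\left| M_{t+h} - M_t - (M_{t,h} - M_t + X_t^1)_+ \right| \leq |S_h - \tilde{S}_h|\, \mathbf{1}_{\{\max(S_h, \tilde{S}_h) > R_t\}}.
\]
Applying Cauchy--Schwarz,
\[
\mathbb{E}\!\left[\left| M_{t+h} - M_t - (M_{t,h} - M_t + X_t^1)_+ \right|\right] \leq \left(\mathbb{E}[|S_h - \tilde{S}_h|^2]\right)^{1/2} \mathbb{P}\!\left(\max(S_h, \tilde{S}_h) > R_t\right)^{1/2}.
\]
Lemma \ref{lem-maj-dif-sup} with $p = 2$ makes the first factor $O(h)$. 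For the second factor, path continuity gives $\max(S_h, \tilde{S}_h) \to 0$ almost surely as $h \to 0^+$, and since the law of $(M_t, X_t)$ admits a density on $\mathbb{R}^{d+1}$ by \cite{LaureMo}, $\mathbb{P}(R_t = 0) = 0$, i.e.\ $R_t > 0$ almost surely. Dominated convergence applied to $\mathbf{1}_{\{\max(S_h, \tilde{S}_h) > R_t\}}$ then yields $\mathbb{P}(\max(S_h, \tilde{S}_h) > R_t) \to 0$, and multiplying the two factors produces an $o(h)$ bound.

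The main obstacle is the refinement step in (ii): one has to simultaneously upgrade Lemma \ref{lem-maj-dif-sup} to $p=2$ to gain an $O(h)$ $L^2$-factor, and gain a small-probability factor from the localization indicator. The latter is only available because the existence of the joint density of $(M_t, X_t)$ forces $R_t > 0$ almost surely; without this input the Cauchy--Schwarz bound would merely recover the $O(h)$ estimate already established in (i) and would be insufficient to conclude.
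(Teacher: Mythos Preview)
Your proposal is correct and follows essentially the same route as the paper: the pointwise bound via the Lipschitz property of $(\cdot)_+$ and the sup-difference inequality, Lemma~\ref{lem-maj-dif-sup} for the $L^p$ estimate, and for (ii) the localization indicator combined with Cauchy--Schwarz and the fact that $\mathbb{P}(M_t = X_t^1)=0$ (from the existence of the joint density). The only cosmetic difference is that for (i) you use the plain $1$-Lipschitz bound and $p=1$ directly, whereas the paper keeps the indicator factor from the start and applies Cauchy--Schwarz with $p=2$ to handle (i) and (ii) in one stroke; your version of (i) is marginally cleaner.
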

\begin{proof} 

Fistly remark
\begin{align}
\label{maxp-plus-lip}
\forall a \in {\mathbb R},~~
\left| \left(x-a\right)_+ -\left(y-a\right)_+ \right| \leq \left|x-y\right|
\left[ {\mathbf 1}_{\{ x> a\}} + {\mathbf 1}_{\{ y> a\}} \right],
\end{align}
{and if $ f$ and $g$ are   functions on  $[0,T],$} then
\begin{align*}
\forall s\in[0,T],~~
f(s) -\sup_{0\leq u \leq T}g(u) \leq f(s) - g(s) \leq | f(s) - g(s)| \leq \sup_{v \leq T}| f(v) - g(v)|,
\end{align*}
 hence   $\sup_{s \leq T} f(s) - \sup_{u \leq T}g(u) \leq \sup_{v \leq T}|f(v) - g(v)|.$
Here the role of  $f$ and $g$ {is} symmetrical so
$
\sup_{s \leq T} g(s) - \sup_{u \leq T}f(u) \leq \sup_{v \leq T}| f(v) - g(v)|,
$
and
\begin{equation}
\label{sup-sup}
\left| \sup_{s \leq T} g(s) - \sup_{u \leq T}f(u) \right| \leq \sup_{v \leq T}|f(v) - g(v)|.
\end{equation}
We now consider 
$
M_{t+h}-M_t =\left( \sup_{0 \leq u \leq h}(X_{u+t}^1 -X_t^1) -M_t + X_t^1\right)_+, 
$
using \eqref{maxp-plus-lip}
\begin{align*}
&\left| M_{t+h}- M_t - \left( M_{t,h} - M_t + X_t^1\right)_+ \right|
\leq
\\
& \left|  \sup_{0 \leq u \leq h}(X_{u+t}^1 -X_t^1) - M_{t,h}
  \right| \left[ {\mathbf 1}_{\{  \sup_{0 \leq u \leq h}(X_{u+t}^1 -X_t^1) > M_t-{ X^1_t}\}} +
  {\mathbf 1}_{\{ M_{t,h}> M_t-X_t^1\}} \right].
\end{align*}
Then, for  {any} $t$ fixed, we apply  inequality \eqref{sup-sup}
  to the maps $g:u\mapsto X_{{u+t}}^1 {-X_t}^1$ and 
  \\
  $f:u\mapsto X_{t,u}^1$. Then
  $\left| M_{t+h}- M_t - \left( M_{t,h} - M_t + X_t^1\right)_+ \right| \leq$
\begin{align*}
 \sup_{0\leq u \leq h} \left|X_{u+t}^1 -X_t^1 - X_{t,u}^1\right|
  \left[ {\mathbf 1}_{\{  \sup_{0 \leq u \leq h}(X_{u+t}^1 -X_t^1)  > M_t-X_t^1\}} +
  {\mathbf 1}_{\{ M_{t,h}> M_t-X_t^1\}} \right].
\end{align*}
From Cauchy-Schwartz's inequality and the fact that $(a+b)^2 \leq 2(a^2+b^2),$  we get
\begin{align*}
&\mathbb{E} \left[ \left| M_{t+h}- M_t - \left( M_{t,h} - M_t + X_t^1\right)_+ \right|\right]\leq
\\
&\sqrt{2\mathbb{E}\left[\sup_{u \leq h}
 \left|X_{u+t}^1 -X_t^1 - X_{t,u}^1\right| ^2\right]
\left( {\mathbb P} \{  \sup_{0 \leq u \leq h}
 (X_{u+t}^1 -X_t^1) > M_t-X_t^1\}+
{\mathbb P} \{ M_{t,h}> M_t-X_t^1\} \right)}.
\end{align*}
Lemma \ref{lem-maj-dif-sup} with $p=2$ insures that {the map} $h \mapsto h^{-1}\sqrt{2\mathbb{E}
 \left[  \sup_{u \leq h} \left|X_{u+t}^1-X_t^1 - X_{t,u}^1\right| ^2\right] }$
  is uniformly bounded  in $t.$
Concerning the second factor,
\\
$\bullet$  firstly  the almost sure continuity with respect to $h$  insures that the {quantities}
\\
$\lim_{h\rightarrow 0}  \sup_{0 \leq u \leq h}(X_{u+t}^1 -X_t^1) $ and $\lim_{h\rightarrow 0} M_{t,h}$
 are equal to $0;$
\\
$\bullet$ secondly the {law of the} pair $(M_t,X_t)$  admits a density  { with respect to the Lebesgue measure on $\bar\Delta$} according to Theorem 1.1 \cite{LaureMo} so
 {${\mathbb P}( \{  0= M_t-X_t^1\})=0$} and  the limit of the second factor  is equal~to~$0.$
\\
 This concludes the proof of the lemma.
\end{proof}
\noindent
Recall Definition \eqref{euler-approx}:
$
X_{t,h}= A_j(X_t)[ W_{t+h}^j-W_t^j],~{ M_{t,h}=\mathbf{\sup_{0\leq u\leq h}} {X_{t,u}^1},}
~h \in [0,1].
$
\begin{lem}
{ Under Assumptions  \eqref{h1h2} and (\ref{hyp:UnEl}), with  $\h$  defined in \eqref{def-hm}:}
\label{pro:calcul-esp-cond}
$$
\mathbb{E}\left[  ( M_{t,h} - M_t + X_t^1 )_+ | {\mathcal F}_t\right] =2\|A^1(X_t)\| \sqrt{h}\h\left(\frac{M_t -X_t^1}{\|A^{1}(X_t)\|\sqrt{h}}\right).
$$
\end{lem}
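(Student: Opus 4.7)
The starting point is to exploit the independence of the Brownian increments $\hat W^j_u = W^j_{t+u} - W^j_t$ from $\mathcal{F}_t$, combined with the $\mathcal{F}_t$-measurability of $A^1(X_t)$, $M_t$ and $X_t^1$. Write $\sigma := \|A^1(X_t)\|$ and $a := M_t - X_t^1$; note $a \geq 0$ since $M_t$ is a running supremum. Conditioning on $\mathcal{F}_t$, these quantities may be treated as constants.

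The first step is to identify the conditional law of the process $u \mapsto X_{t,u}^1 = \sum_j A^1_j(X_t)\hat W^j_u$ given $\mathcal{F}_t$. Since $\hat W$ is a $d$-dimensional Brownian motion independent of $\mathcal{F}_t$ and $A^1(X_t)$ is $\mathcal{F}_t$-measurable, the process $(X_{t,u}^1)_{u\geq 0}$ is, conditionally on $\mathcal{F}_t$, a continuous centered Gaussian martingale with quadratic variation $\sigma^2 u$. Hence it has the same conditional law as $(\sigma B^*_u)_{u\geq 0}$, where $B^*$ is a standard one-dimensional Brownian motion independent of $\mathcal{F}_t$.

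The second step invokes the reflection principle: $\sup_{0\leq u\leq h} B^*_u$ has the law of $|B^*_h|$, i.e.\ the law of $\sqrt{h}|Z|$ with $Z\sim\mathcal{N}(0,1)$. Therefore the conditional law of $M_{t,h}$ given $\mathcal{F}_t$ is the law of $\sigma\sqrt{h}|Z|$. Consequently,
\begin{align*}
\mathbb{E}\!\left[(M_{t,h} - M_t + X_t^1)_+ \,\big|\, \mathcal{F}_t\right]
 &= \mathbb{E}_Z\!\left[\bigl(\sigma\sqrt{h}|Z| - a\bigr)_+\right] \\
 &= 2\int_0^{\infty} \bigl(\sigma\sqrt{h}\,z - a\bigr)_+ \frac{1}{\sqrt{2\pi}} e^{-z^2/2}\, dz,
\end{align*}
where the factor $2$ uses the symmetry of the standard Gaussian density.

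The third step is the direct integration. Since $a\geq 0$, the integrand vanishes on $[0,\theta]$ with $\theta := a/(\sigma\sqrt{h}) \geq 0$, so after the change of variable the integral equals
\begin{equation*}
2\sigma\sqrt{h}\int_{\theta}^{\infty} (z-\theta)\frac{1}{\sqrt{2\pi}} e^{-z^2/2}\,dz = 2\sigma\sqrt{h}\,\h(\theta),
\end{equation*}
by the very definition of $\h$ in \eqref{def-hm}. Substituting back $\sigma = \|A^1(X_t)\|$ and $\theta = (M_t-X_t^1)/(\|A^1(X_t)\|\sqrt{h})$ yields the claim. There is no real obstacle here; the only delicate point is making the conditional reduction rigorous, which is immediate from the independence of $\hat W$ from $\mathcal{F}_t$ and the $\mathcal{F}_t$-measurability of $A^1(X_t)$ (so that Assumption~\eqref{hyp:UnEl}, which ensures $\sigma>0$ almost surely, is only needed to interpret the ratio $\theta$, not for the identity itself).
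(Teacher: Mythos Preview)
Your proof is correct and follows essentially the same approach as the paper: identify the conditional law of $X_{t,\cdot}^1$ as a scaled one-dimensional Brownian motion, invoke the reflection principle so that $M_{t,h}$ has the conditional law of $\|A^1(X_t)\|\sqrt{h}\,|G|$ with $G$ standard Gaussian, and then recognize the resulting integral as $2\|A^1(X_t)\|\sqrt{h}\,\h(\theta)$. The paper proceeds identically (with a time rescaling to $[0,1]$ rather than your direct use of $\sqrt{h}|Z|$, which is an immaterial difference).
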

\begin{proof}
{For any $t$ fixed,} conditionally to ${\mathcal F}_t$ 
 the process {$(X_{t,u}^1, ~u \in [0,h])$ \eqref{X1tu} has the same law
as $(\sqrt{h}\|A^{1}(X_t)\|\hat W_u,~u\in [0,1])$ where $\hat W$ is a Brownian motion independent of ${\mathcal F}_t$,
and  {for any $h$,} the random variable $M_{t,h}$ has the same law as $ \sqrt{h}\|A^{1}(X_t)\|\sup_{u \leq 1}\hat{ W}_u.$}
\\
Following { \cite{CJY} Section 3.1.3, } {the random variable} $\sup_{u \leq 1}\hat W_u$ has the same law as $|G|$ where $G$ is a standard Gaussian variable (independent of ${\mathcal F}_t),$ with   density $\frac{2}{\sqrt{2\pi}}e^{-\frac{z^2}{2}}{\mathbf 1}_{[0,+\infty [}(z)$.
Then { using the function $\h$  introduced in} \eqref{def-hm}
\begin{align*}
\mathbb{E}\left[(M_{t,h} -(M_t -X_t^1))_+ |{\mathcal F}_t \right]
&= \int_0^{\infty} \left( \|A^{1}(X_t)\|\sqrt{h} z - (M_t -X_t^1)\right)_+ \frac{2}{\sqrt{2\pi}}e^{-\frac{z^2}{2}}dz
\\
&= 2 \|A^{1}(X_t)\|\sqrt{h} \h( \frac{M_t -X_t^1}{\sqrt{h}\|A^{1}(X_t)\|}).
\end{align*}
\end{proof}

\subsection{Proof of Proposition \ref{proDiagm} }
\label{Sec2.3}

{ Let $t>0$.} The key of this proof is to write the quantity
$$
\mathbb{E}\left[\int_t^{t+h}\Psi(V_s)dM_s\right]-{ 2}\sqrt h\mathbb{E}\left[\Psi({ V_t}){ \|A^{1}({ X_t})\|}\h(\frac{M_t-X_t^1}{\sqrt h \|A^{1}({ X_t})\|})\right]
$$
as the sum of { three}  terms,
\begin{align}
\label{threeterms}
&\mathbb{E}\Big[\int_t^{t+h}(\Psi(V_s)-\Psi(V_t))dM_s\Big]+
{ \mathbb{E}\Big[\Psi(V_t)\Big((M_{t+h}-M_t)- \mathbb{E}\left[M_{t,h}-M_t+X_t^1)_+|  {\mathcal F}_t \right]\Big)\Big]}
\\
&+\mathbb{E}\left[\Psi(V_t) { \mathbb{E}} \left[(M_{t,h}-M_t+X_t^1)_+\left| \right. {\mathcal F}_{ t} \right]
-{ 2}\sqrt h\Psi(V_t){ \|A^{1}(X_t)\|}\h(\frac{M_t-X_t^1}{\sqrt h \|A^{1}(X_t)\|})\right].\nonumber
\end{align}
{We now prove that each terms in sum \eqref{threeterms} are both $o(h)$ and $O(h)$ uniformly in time.}
\\
(a) Using Lemma \ref{pro:calcul-esp-cond} the { third} term is null.
\\
 (b)  Concerning the second term, using the fact that $\Psi$ is bounded  and 
  { Lemma \ref{di-sup-cv-0} (i) {for all} $  t\in [0,T]$
\begin{align*}
&\left|\mathbb{E}\left[\Psi(V_t)[(M_{t+h}-M_t)- \mathbb{E}[(M_{t,h}-M_t+X_t^1)_+\left| \right. {\mathcal F}_{ t} ]\right]\right|\leq
\\
&\|\Psi\|_\infty\left|\mathbb{E}\left[M_{t+h}-M_t-
 \mathbb{E}[(M_{t,h}-M_t+X_t^1)_+| {\mathcal F}_t]\right]\right|{ \leq Ch\|\Psi\|_\infty},
\end{align*}
{ as it is required in \eqref{diagonalem}}.
{Moreover using  Lemma \ref{di-sup-cv-0} (ii)
\begin{align*}
&\lim_{h\to 0}\frac{1}{h}\left|\mathbb{E}\left[\Psi(V_t)[(M_{t+h}-M_t)- \mathbb{E}[(M_{t,h}-M_t+X_t^1)_+\left| \right. {\mathcal F}_{ t} ]\right]\right|=0.
\end{align*}}
\\
(c) Since $\nabla \Psi$ is bounded and the process $M$
is increasing, the first term is {  bounded:}
$$
\mathbb{E}\left[\int_t^{t+h}[\Psi(V_s)-\Psi(V_t)]dM_s\right]\leq
\|\nabla\Psi\|_\infty \mathbb{E}[\sup_{t\leq s\leq t+h}
\|V_s-V_t\|(M_{t+h}-M_t)].
$$
Using {Cauchy-Schwarz's} inequality
$$\mathbb{E}\left[\sup_{t\leq s\leq t+h}\|V_s-V_t\|(M_{t+h}-M_t)\right]
\leq
 \sqrt{\mathbb{E}[\sup_{t\leq s\leq t+h}\|V_s-V_t\|^2]\mathbb{E}[(M_{t+h}-M_t)^2]}.
$$
{ Since  $\|V_s-V_t\|^2=(M_s-M_t)^2+
\|X_s-X_t\|^2,$ we {obtain}
\\
$\sup_{t\leq s\leq t+h}\|V_s-V_t\|^2\leq
(M_{t+h}-M_t)^2+\sup_{t\leq s\leq t+h}\|X_s-X_t\|^2,$ hence}
{\footnotesize
$$ \mathbb{E}[\sup_{t\leq s\leq t+h}\|V_s-V_t\|(M_{t+h}-M_t)]
\leq\sqrt{ \mathbb{E}[(M_{t+h}-M_t)^2]+ \mathbb{E}[\sup_{t\leq s\leq t+h}\|X_s-X_t\|]^2)}\sqrt{\mathbb{E}[(M_{t+h}-M_t)^2]}
$$
}
Lemmas \ref{maj-x-unif-h} and  \ref{maj-sup-h}
($p=2$)
yield { the fact that  the first   factor is an $o(\sqrt h)$ and the second  is an $O(\sqrt h)$  uniformly with respect to {$t\geq 0$}. Then $\mathbb{E}[\sup_{t\leq s\leq t+h}\|V_s-V_t\|(M_{t+h}-M_t)]$
is an $o(h)$ and an $O(h)$ 
uniformly with respect to {$t\geq 0$}.}
\hfill$\Box$

{
\subsection{Proof of Proposition \ref{pro-forme-dens-phi-t}}
\label{Sec2.4}

{ (i) Recall}   that $A$ and $B$ fulfil  \eqref{h1h2}, \eqref{hyp:UnEl} and $(M,X)$ fulfils Hypothesis \ref{hyp-dens-outil}.
Then, using {the density $p_V$  
  of
the {law} of the   {pair} $(M_t,X_t)$
%
 we have
\begin{align*}
&{\mathbb E} \left[\Psi(V_t) \|A^1(X_t)\| {\mathcal H} \left( \frac{M_t-{ X^1_t}}{\sqrt{h}\|A^1(X_t)\|}\right) \right]\leq
\\
&\|\Psi\|_{\infty} \|A\|_{\infty}\int_{{\mathbb R}^{d+1}}{\mathcal H}\left( \frac{m-x^1}{\sqrt{h}\|A^1(x^1,\tilde{x})\|}\right) p_V(m,x^1,\tilde{x};t) dm~ dx^1~d\tilde{x}.
\end{align*}
The change of variable { $x^1=m-u\sqrt{h}$} yields
\begin{align}
\label{rep-dens-t-1}
&\frac{\sqrt{h}}{h}{\mathbb E} \left[ \Psi(V_t)\|A^1(X_t)\| {\mathcal H} \left( \frac{M_t-{ X^1_t}}{\sqrt{h}\|A^1(X_t)\|}\right) \right]\leq \\
&\|\Psi\|_{\infty} \|A^1\|_{\infty}\int_{{\mathbb R}^d\times[0,+\infty[} {\mathcal H}\left( \frac{u}{\|A^1(m-\sqrt{h}u,\tilde{x})\|}\right) p_V(m,m-\sqrt{h}u,\tilde{x};t) dm~ d\tilde{x}~du.\nonumber
\end{align}
\\
Since $\h$ is decreasing (Remark \ref{remm})
 and  $0\leq h\leq 1,$
 $\h\left( \frac{u}{\|A^1(m-\sqrt{h}u,\tilde{x})\|}\right)\leq \h(\frac{u}{\|A^1\|_\infty}):$ 
 \begin{align*}
&\left|\frac{\sqrt{h}}{h}{\mathbb E} \left[\Psi(V_t) \|A^1(X_t)\| {\mathcal H} \left( \frac{M_t-{X^1_t}}{\sqrt{h}\|A^1(X_t)\|}\right) \right]\right|\leq
\\
&\|\Psi\|_{\infty} \|A^1\|_{\infty}
\int_{{\mathbb R}^d\times[0,+\infty[}\h\left(\frac{u}{\|A^1\|_\infty}\right)\sup_{r>0}p_V(m,m-r,\tilde{x};t) dm~ d\tilde{x}~du.
\end{align*}
%

 { Applying Tonelli's} Theorem, computing the integral with respect to $du$ in  the right-hand with}  $\int_0^{\infty}\h(v)dv=1/4$ { (Remark \ref{remm})}, yield:
 {\footnotesize
 \begin{align*}
\sup_{h>0}\left|\frac{\sqrt{h}}{h}{\mathbb E} \left[\Psi(V_t) \|A^1(X_t)\| {\mathcal H} \left( \frac{M_t-{ X^1_t}}{\sqrt{h}\|A^1(X_t)\|}\right) \right]\right|\leq
\frac{1}{4} \|\Psi\|_{\infty} \|A^1\|^2_{\infty}
\int_{{\mathbb R}^{d}}\sup_{r>0}p_V(m,m-r,\tilde{x};t) dm~ d\tilde{x} .
\end{align*}
}

{Using Hypothesis} \ref{hyp-dens-outil} (i), {we obtain that the map}:
$$
t\mapsto \sup_{h>0}\left|\frac{\sqrt{h}}{h}{\mathbb E} \left[\Psi(V_t) \|A^1(X_t)\| {\mathcal H} \left( \frac{M_t-X^1_t}{\sqrt{h}\|A^1(X_t)\|}\right) \right]\right|
$$
  belongs to $  L^1([0,T], \mathbb{R})$.
Point (i) of Proposition \ref{pro-forme-dens-phi-t} is proved.

{(ii) Concerning the proof of point (ii),} firstly note that
{\footnotesize
\begin{align*}
{\mathbb E} \left[ \Psi(V_t)\|A^1(X_t)\| {\mathcal H} \left( \frac{M_t-X^1_t}{\sqrt{h}\|A^1(X_t)\|}\right)\right]=
\int_{{\mathbb R}^{d+1}} \Psi(m,x) \|A^1(x)\| {\mathcal H} \left( \frac{m-x^1}{\sqrt{h}\|A^1(x)\|}\right) p_V(m,x;t) dm ~dx.
\end{align*}
}
{After the } change of variable { $x^1=m-u\sqrt{h},$} {we obtain}
{\footnotesize
\begin{align}
\label{rep-dens-t-1-bis}
&\frac{\sqrt{h}}{h}{\mathbb E} \left[ \Psi(V_t)\|A^1(X_t)\| {\mathcal H} \left( \frac{M_t-{ X^1_t}}{\sqrt{h}\|A^1(X_t)\|}\right) \right] =
\\
&\int_{{\mathbb R}^d\times\mathbb R^+} \Psi(m,m-u\sqrt{h},\tilde{x})\|A^1(m-u\sqrt{h},\tilde{x})\|{\mathcal H}\left( \frac{u}{\|A^1(m-\sqrt{h}u,\tilde{x})\|}\right) p_V(m,m-\sqrt{h}u,\tilde{x};t) dm~ d\tilde{x}~du.\nonumber
\end{align}}
 Using {Lebesgue's} dominated convergence Theorem, we  { let $h$ go} to $0$ in \eqref{rep-dens-t-1-bis}
{ for $t>0$}, {and} using the fact that { $\Psi,$} $A$ and ${\mathcal H}$ are continuous and Hypothesis \ref{hyp-dens-outil} (ii) we obtain
\begin{align*}
&\lim_{h \rightarrow 0}\frac{\sqrt{h}}{h}{\mathbb E} \left[ \Psi(V_t)\|A^1(X_t)\| {\mathcal H} \left( \frac{M_t-X_t}{\sqrt{h}\|A^1(X_t)\|}\right) \right]=
\\
&\int_{{\mathbb R}^{d}\times[0,+\infty[}\Psi(m,{m,\tilde{x}})\|A^1(m,\tilde{x})\| \h\left( \frac{u}{\| A^1({ m,\tilde{x}})\|}\right) {p_V(m,m,\tilde{x};t)}dm~d\tilde{x} ~du.
\end{align*}
Using   the change of variable $z=\frac{u}{\|A^1(m,\tilde{x})\|},$  and Remark \ref{remm} $\int_0^{\infty}\h(z)dz=1/4,$  yields
{\footnotesize
\begin{align*}
\lim_{h \rightarrow 0}\frac{\sqrt{h}}{h}{\mathbb E} \left[ \Psi(V_t)\|A^1(X_t)\| {\mathcal H} \left( \frac{M_t-X^1_t}{\sqrt{h}\|A^1(X_t)\|}\right) \right]=\frac{1}{4}\int_{{\mathbb R}^{d}}\Psi(m,{m,\tilde x})\|A^1(m,\tilde{x})\|^2 p_V(m,{ m,}\tilde{x};t)dm~d\tilde{x} .
\end{align*}
}
\hfill$\Box$

\subsection{End of proof of Theorem \ref{pro-sous-hyp-dens}}
\label{Sec3}

 We recall Theorem 8.2 page 204 in  Brezis
\cite{brezis}: let $f\in W^{1,1}(0,T)$,
then $f$ is almost surely equal to an absolutely continuous function. As a particular case,
any $f\in W^{1,1}(0,T)\cap C(0,T)$
is  absolutely continuous.
Recall $F_{\psi}~:t \mapsto  {{\mathbb E}\left[\int_0 ^t \Psi(V_s)dM_s \right]}.$}

\begin{lem}
\label{lem-cont-F-psi}
Assume that $A$ and $B$ fulfil \eqref{h1h2} and \eqref{hyp:UnEl} and {that} $\Psi$ is a continuous bounded function. Then $F_{\Psi}$   is a continuous function on $\mathbb{R}^+.$ 
\end{lem}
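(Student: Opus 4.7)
The plan is to show continuity by direct estimation, exploiting the fact that $M$ is non-decreasing and that the increments of $M$ over an interval of length $h$ are controlled by $\sqrt{h}$ in $L^1$, via the estimates already established in Lemma \ref{maj-sup-h}.

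Fix $t\geq 0$ and $h\in(0,1]$. Using that $\Psi$ is bounded and that $M$ is a non-decreasing process, I would begin by writing the increment
\[
F_\Psi(t+h)-F_\Psi(t)=\mathbb{E}\!\left[\int_t^{t+h}\Psi(V_s)\,dM_s\right],
\]
and bound its absolute value by
\[
|F_\Psi(t+h)-F_\Psi(t)|\leq \|\Psi\|_\infty\,\mathbb{E}[M_{t+h}-M_t].
\]
Then Lemma \ref{maj-sup-h} with $p=1$ yields $\mathbb{E}[M_{t+h}-M_t]\leq C_{1,K}\sqrt{h}$, so the right-hand side tends to $0$ as $h\to 0^+$. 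This gives right-continuity at every $t$.

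For left-continuity at $t>0$, I would apply the same argument on the interval $[t-h,t]$ for $h\in(0,t\wedge 1]$: again by monotonicity of $M$,
\[
|F_\Psi(t)-F_\Psi(t-h)|\leq \|\Psi\|_\infty\,\mathbb{E}[M_t-M_{t-h}],
\]
and Lemma \ref{maj-sup-h} (with the time origin shifted to $t-h$) bounds this quantity by $C_{1,K}\,\|\Psi\|_\infty\sqrt{h}\to 0$.

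There is no real obstacle here beyond recognizing that continuity of $F_\Psi$ reduces to $L^1$-continuity of the increments of $M$, which is already delivered by Lemma \ref{maj-sup-h}; the boundedness of $\Psi$ absorbs the integrand and the monotonicity of $M$ converts the stochastic integral into a plain increment. This concludes the proof of Lemma \ref{lem-cont-F-psi}, which in turn (combined with Proposition \ref{proDiagm}(ii) and Proposition \ref{pro-forme-dens-phi-t}) will yield $F_\Psi\in W^{1,1}(0,T)\cap C(0,T)$, and hence absolute continuity by Brezis' Theorem 8.2 as recalled above.
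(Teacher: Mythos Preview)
Your proof is correct and follows essentially the same route as the paper: both bound $|F_\Psi(t)-F_\Psi(s)|\leq \|\Psi\|_\infty\,\mathbb{E}[M_t-M_s]$ using boundedness of $\Psi$ and monotonicity of $M$. The only difference is cosmetic: the paper simply invokes continuity of $t\mapsto\mathbb{E}[M_t]$, whereas you make this explicit via the $\sqrt{h}$ estimate of Lemma~\ref{maj-sup-h} and treat left/right continuity separately.
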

\begin{proof} Let $0 \leq s\leq t.$ Since $\Psi$ is bounded and $M$ is non decreasing
\begin{align*}
\left| F_{\Psi}(t)-F_{\Psi}(s)\right|=\left|{\mathbb E}\left[ \int_s^t \Psi(V_u) dM_u\right]\right|\leq \|\Psi\|_{\infty} {\mathbb E} [M_t-M_s].
\end{align*}
{The {map}  $t\mapsto {\mathbb E}[M_t]$ {being} continuous,
$F_{\Psi}$ is a continuous function on $\mathbb{R}^+.$}
\end{proof}

{
\begin{lem}
\label{lem-F-psi-W11}
Assume that $A$ and $B$ fulfil \eqref{h1h2} and \eqref{hyp:UnEl}, $(M,X)$ fulfils  Hypothesis \ref{hyp-dens-outil} and $\Psi \in C^1_b.$ Then  for all $T>0$, {the map}
$F_{\psi}$ belongs to the Sobolev space $W^{1,1}(]0,T[)$
and its weak derivative is
\begin{align*}
\dot{F}_{\Psi}(t):=
\frac{1}{2}\int_{{\mathbb R}^{d}}\Psi(m,\tilde{x})\|A^1(m,\tilde{x})\|^2 p_V(m,m,\tilde{x};t)dmd\tilde{x}
\end{align*}
\end{lem}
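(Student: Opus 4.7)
The candidate weak derivative is
\[
g(t):=\frac{1}{2}\int_{\mathbb{R}^d}\Psi(m,m,\tilde{x})\|A^1(m,\tilde{x})\|^2 p_V(m,m,\tilde{x};t)\,dm\,d\tilde{x},
\]
which lies in $L^1(]0,T[)$ by the corollary to Proposition~\ref{pro-forme-dens-phi-t}, while Lemma~\ref{lem-cont-F-psi} furnishes $F_\Psi\in C([0,T])\subset L^1(]0,T[)$. The only remaining task is to verify the distributional identity $\int_0^T F_\Psi(t)\,\phi'(t)\,dt=-\int_0^T g(t)\,\phi(t)\,dt$ for every test function $\phi\in C_c^\infty(]0,T[)$.

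Fix such a $\phi$ with support in $[a,b]\subset\,]0,T[$. Approximating $\phi'$ by the forward difference $(\phi(\cdot+h)-\phi)/h$, which converges uniformly on $[0,T]$, and using that $F_\Psi$ is bounded there, one obtains after a translation of variable in the $\phi(t+h)$ term (valid as soon as $h$ is small enough that $[a,b]\subset\,]h,T-h[$)
\[
\int_0^T F_\Psi(t)\,\phi'(t)\,dt=-\lim_{h\to 0^+}\int_0^T T_{h,u}\,\phi(u+h)\,du,
\]
where $T_{h,u}:=(F_\Psi(u+h)-F_\Psi(u))/h$ is precisely the difference quotient analysed in Sections~\ref{Sec2.3} and~\ref{Sec2.4}.

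The limit is then identified with $-\int_0^T g(u)\phi(u)\,du$ by Lebesgue's dominated convergence theorem on $[0,T]$. Setting
\[
R_h(u):=\frac{2\sqrt h}{h}\,\mathbb{E}\!\left[\Psi(V_u)\|A^1(X_u)\|\,\h\!\left(\frac{M_u-X_u^1}{\sqrt h\,\|A^1(X_u)\|}\right)\right],
\]
Propositions~\ref{proDiagm}(ii) and~\ref{pro-forme-dens-phi-t}(ii) give $T_{h,u}\to g(u)$ pointwise for every $u\in\,]0,T[$, while $\phi(u+h)\to \phi(u)$ uniformly. An $L^1$-dominant for the sequence $(T_{h,u}\phi(u+h))_h$ is then produced by the triangle inequality $|T_{h,u}|\leq |T_{h,u}-R_h(u)|+\sup_{h>0}|R_h(u)|$: the first summand is bounded by a constant uniformly in $(u,h)\in[0,T]\times\,]0,1]$ after dividing the $Ch$ estimate of Proposition~\ref{proDiagm}(i) by $h$, and the second summand lies in $L^1([0,T])$ by Proposition~\ref{pro-forme-dens-phi-t}(i).

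Assembling the two sides yields $\int F_\Psi\phi'=-\int g\phi$, so $F_\Psi\in W^{1,1}(]0,T[)$ with weak derivative $g$, and the absolutely continuous representative then comes from Theorem~8.2 of \cite{brezis}. The one genuinely technical point is the construction of the uniform $L^1$-dominant for the difference quotients, and this is precisely what the combined statements of Propositions~\ref{proDiagm} and~\ref{pro-forme-dens-phi-t} were engineered to deliver, so no further probabilistic estimate on $(M,X)$ is needed at this stage.
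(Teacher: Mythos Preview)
Your proof is correct and follows essentially the same route as the paper: approximate the test-function derivative by a difference quotient, shift the integration variable to transfer the difference quotient onto $F_\Psi$, and then pass to the limit by dominated convergence using Proposition~\ref{proDiagm}(i)--(ii) for the uniform $O(1)$ bound on $|T_{h,u}-R_h(u)|$ and Proposition~\ref{pro-forme-dens-phi-t}(i)--(ii) for the $L^1$ dominant and the pointwise limit. A tiny slip: the translation that yields $-\int_0^T T_{h,u}\,\phi(u+h)\,du$ is actually performed on the $\phi(t)$ term (via $t=u+h$), not on the $\phi(t+h)$ term as you write, but the displayed identity itself is correct and the argument goes through unchanged.
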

\begin{proof}
Let $g~: [0,T]\rightarrow \mathbb{R}$ be $C^1$ with compact support $[\alpha,\beta]\subset(0,T).$
  This means both functions $g$ and $\dot g$ are continuous so bounded and that moreover { $g(\alpha)=g(\beta)=0.$}
Note that
$
\dot{g}(t)= \lim_{h \rightarrow 0} \frac{g(t) -g(t-h)}{h},~~\forall t \in (0,T)
.$
Moreover, 
\\
 $\sup_{t \in [0,T]}\sup_{h \in [0,1]}|\frac{g(t) -g(t-h)}{h}| \leq \|\dot{g}\|_{\infty}.$
Observe that, since $M$ is non decreasing and  the coefficients $A$ and $B$ are bounded
$
\left| F_{\psi}(t)\right|\leq \|\Psi\|_{\infty} {\mathbb E} [M_T] <\infty.
$

Then, using  {Lebesgue's dominated convergence} Theorem
\begin{align*}
\int_0^T \dot{g}(s) F_{\psi}(s)ds=\int_0^T\lim_{h \rightarrow 0} \frac{g(s) -g(s-h)}{h}F_{\psi}(s)ds=\lim_{h \rightarrow 0}\int_0^T \frac{g(s) -g(s-h)}{h}F_{\psi}(s)ds.
\end{align*}

Using {the} change of variable $u=s-h$ in the last integral
\begin{align*}
\int_0^T &\frac{g(s) -g(s-h)}{h}F_{\Psi}(s)ds=h^{-1}\int_0^T g(s) F_{\Psi}(s) ds -h^{-1}\int_{-h}^{T-h} g(u) F_{\Psi}(u+h)du
\\
&= \int_{0}^T g(s) \frac{F_{\Psi}(s) -F_{\Psi}(s+h)}{h}ds{-}h^{-1}\int_{-h}^0 g(s)F_{\Psi}(s+h) ds {+}h^{-1}\int_{T-h}^Tg(s) F_{\Psi}(s+h) ds.
\end{align*}
{Recalling $supp(g)=[\alpha,\beta]\subset(0,T)$,
 $gF_{\Psi}$ is bounded on $[0,T]$ 
{ extended by $0$ on  $[\alpha,\beta]^c$ }
so
  $\lim_{s\rightarrow 0} g(s)=\lim_{s\rightarrow T} g(s)=0$
  then 
  { $h^{-1}\int_{-h}^0 g(s)F_{\Psi}(s+h) ds= h^{-1}\int_{T-h}^Tg(s) F_{\psi}(s+h) ds=0$  as soon as $0<h\leq T-\beta$}
  thus
$\lim_{h \rightarrow 0} \left[h^{-1}\int_{-h}^0 g(s)F_{\Psi}(s+h) ds\right] =$
\\
$\lim_{h \rightarrow 0}\left[ h^{-1}\int_{T-h}^Tg(s) F_{\psi}(s+h) ds\right]=0
$
 Applying  {Lebesgue's dominated convergence}
 Theorem yields, $ F$ admits   a weak derivative:
\begin{align*}
\int_0^T \dot{g}(s) F_{\psi}(s)ds=-\int_0^T g(s) \dot{F}_{\Psi}(s) ds.
\end{align*}
}
 {
Using Proposition \ref{proDiagm} (ii)
{
$$
\lim_{h\to 0^+}\left(-\frac{F_{\Psi}(t) -F_{\Psi}(t+h)}{h}- \frac{2}{\sqrt h}{\mathbb E}
\left[ \Psi(V_t)\|A^1(X_t)\|{\mathcal H}\left( \frac{M_t-X^1_t}{\sqrt{h}\|A^1(X_t)\|}\right)\right]\right)=0.
$$}
}
Using Proposition \ref{pro-forme-dens-phi-t}
(ii):
\begin{align*}
-\dot{F}_{\Psi}{(t+)}:=\lim_{h \rightarrow 0,h>0}\frac{F_{\Psi}(t) -F_{\Psi}(t+h)}{h}
=-\frac{1}{2}\int_{{\mathbb R}^{d}}\Psi(m,m,\tilde{x})\|A^1(m,\tilde{x})\|^2 p_V(m,m,\tilde{x};t)dmd\tilde{x} ,
\end{align*}
and the 
 points (i) of Propositions \ref{proDiagm} and  \ref{pro-forme-dens-phi-t}:
\begin{align*}
\sup_{h>0} \left| \frac{F_{\Psi}(t) -F_{\Psi}(t+h)}{h}\right| \in L^1([0,T],dt),
\end{align*}
so $\dot{F}_{\Psi}\in L^1([0,T],\mathbb{R}).$ 
\\
According to \cite{brezis} Chap 8 section 2 page 202, $F_{\Psi}$ belongs to $W^{1,1}(]0,T[,{\mathbb R}).$
\end{proof}}

We now end the proof of Theorem \ref{pro-sous-hyp-dens}:
According to Theorem 8.2  page 204 of \cite{brezis}, $F_{\psi}$ is equal almost surely to an absolutely continuous function.
Since $F_{\Psi}$ is continuous {(Lemma \ref{lem-cont-F-psi}),} the equality holds everywhere. Then $F_{\Psi}$ is an absolutely continuous function and its derivative is its right derivative.
\hfill$\Box$


\section{Case $A=I_d$}
\label{density}

In this rather technical section, we firstly  prove that the {density of the pair } $(M_t,X_t)$ { fulfils Hypothesis \ref{hyp-dens-outil}: $p_V$ \eqref{(*)} is continuous on the boundary of $\bar{\Delta}$ and}  is dominated by an integrable function:
\begin{pro}
\label{cont-ui--dens}
 Assume  that $B$ fulfils { Assumption \eqref{h1h2}} and $A=I_d,$
 then {$(M,X)$ fulfils Hypothesis \ref{hyp-dens-outil} meaning that for all { probability measure $\mu_0$} on ${\mathbb R}^d$ }
\begin{align*}
(i)&~~\forall T>0,~~\sup_{(h,u) \in [0,1]\times {\mathbb R}_+}p_V(b,b-hu,\tilde{a};t,{ \mu_0}) \in L^1([0,T] \times {\mathbb R}^d, dtdbd\tilde{a}).
\\
(ii)& \mbox{ Almost surely in } (m, \tilde{x}) \in {\mathbb R}^d, \forall t>0,
~~\lim_{u \rightarrow 0,u>0} p_V(m,m-u,\tilde{x};t,{ \mu_0})=p_V(m,m,\tilde{x};t,{ \mu_0}).
\end{align*}
\end{pro}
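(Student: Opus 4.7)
The plan is to reduce, via Girsanov's transformation, to the Brownian case where the density of $V_t=(M_t,X_t)$ is explicit through the reflection principle, and then to transfer the desired continuity and integrability back to $\mathbb P$ via a Duhamel-type series expansion in the bounded drift $B$. By Fubini it suffices to treat a deterministic initial condition $X_0=x_0$ and to integrate against $\mu_0$ at the end. Since $A=I_d$ and $B$ is bounded, the exponential $Z_t:=\exp\bigl(-\int_0^t B(X_s)\cdot dW_s-\tfrac12\int_0^t\|B(X_s)\|^2ds\bigr)$ defines an equivalent probability $\mathbb Q$ under which $X=x_0+W^{\mathbb Q}$ is a standard $d$-dimensional Brownian motion; both $Z_t$ and $Z_t^{-1}$ have uniform $L^p$-bounds on $[0,T]$ for every $p\ge1$, with constants depending only on $p$, $T$ and $\|B\|_\infty$.

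Under $\mathbb Q$ the first coordinate is a one-dimensional Brownian motion independent of the remaining $d-1$ coordinates, so the reflection principle yields the explicit formula
\begin{align*}
p_V^{\mathbb Q}(m,y,\tilde y;t,x_0)
&=\frac{2(2m-y-x_0^1)}{\sqrt{2\pi t^3}}\,e^{-(2m-y-x_0^1)^2/(2t)}\\
&\quad\times(2\pi t)^{-(d-1)/2}\,e^{-\|\tilde y-\tilde x_0\|^2/(2t)}\,\mathbf 1_{\{m\ge x_0^1\vee y\}}.
\end{align*}
This is continuous on $\bar\Delta$, in particular across the boundary $\{y=m\}$; and the elementary bound $ve^{-v^2/(2t)}\le\sqrt{t}\,e^{-1/2}$ combined with the Gaussian integral in $\tilde y$ shows, after a short computation in $m$, that $\sup_{u>0}p_V^{\mathbb Q}(m,m-u,\tilde y;t,x_0)$ is integrable on $[0,T]\times\mathbb R^d$ (the only residual singularity is a locally integrable $t^{-1/2}$ at $t=0$). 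Thus Hypothesis \ref{hyp-dens-outil} already holds under $\mathbb Q$.

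To pass back to $\mathbb P$, I would expand $Z_t^{-1}=\mathcal E\bigl(\int B\cdot dW^{\mathbb Q}\bigr)_t$ iteratively and, conditioning on $V_t$ under $\mathbb Q$, write $p_V^{\mathbb P}$ as a Duhamel series $p_V^{\mathbb P}=\sum_{k\ge0}p_V^{(k)}$ in powers of $B$, with $p_V^{(0)}=p_V^{\mathbb Q}$ and each $p_V^{(k)}$ a $k$-fold iterated space--time integral of the explicit Brownian kernel against the drift $B$. Every term inherits continuity at the boundary $\{y=m\}$ from the explicit Brownian formula above, and the boundedness of $B$ together with uniform $L^p$-bounds on $Z_t^{-1}$ yields a geometric estimate of the form $\|p_V^{(k)}\|\le C(\|B\|_\infty T)^k/k!$ in the norm encoding Hypothesis \ref{hyp-dens-outil}(i), so the series converges absolutely. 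This delivers simultaneously (i) and (ii) for each fixed $x_0$; integrating against $\mu_0$ then gives the general case.

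The main obstacle is the analytic control of the individual Duhamel terms $p_V^{(k)}$ up to the degenerate boundary $\{y=m\}$: one must show that the convolutions of the explicit Brownian kernel against the drift $B$ preserve both pointwise continuity at $y=m$ and the $t^{-1/2}$-type behaviour compatible with the integrability of $\sup_{u>0}$ demanded by (i). A careful handling of the time singularity in the iterated integrals, together with uniform estimates on the Brownian transition densities of $V$, is what will make the series sum to a function satisfying both clauses of Hypothesis \ref{hyp-dens-outil}.
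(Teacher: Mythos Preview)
Your high-level strategy coincides with the paper's: Girsanov reduces to the Brownian case, where the density of $(W^{1*}_t,W_t)$ is explicit via reflection, and one then builds a convergent series in powers of the bounded drift $B$ to recover $p_V$ under $\mathbb P$. The paper implements this by writing $Z_t=1+\int_0^t Z_sB(x_0+W_s)\cdot dW_s$ and applying the Malliavin duality $\mathbb E[G\,\delta(u)]=\mathbb E[\langle DG,u\rangle_{\mathbb H}]$ with $G=F(x_0^1+W^{1*}_t,x_0+W_t)$; this produces an explicit integral equation $p_V=p_0+\mathcal I(p_V)$ (Proposition~\ref{rem-dens-mal}), where $\mathcal I$ is built from the derivatives $\partial_k p_{W^{1*},W}$. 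Iterating this fixed-point identity yields the series $p_V=\sum_n p_n$ with $p_{n+1}=\mathcal I(p_n)$.

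Your sketch has a gap at precisely this conversion step. You write that you would ``expand $Z_t^{-1}$ iteratively and, conditioning on $V_t$ under $\mathbb Q$, write $p_V^{\mathbb P}$ as a Duhamel series'', but conditioning an iterated It\^o integral on the functional $V_t=(\sup_{s\le t}W^1_s,W_t)$ is exactly the non-trivial part, and you give no mechanism for it. The paper's Malliavin integration by parts is what turns the stochastic integral into a deterministic kernel, and it reveals a structural feature you do not anticipate: the $\alpha/\beta$ split arising from the dichotomy $\{M_s<X^1_s+(\hat W^1)^*_{t-s}\}$ versus $\{M_s>X^1_s+(\hat W^1)^*_{t-s}\}$ in $M_t=\max(M_s,X^1_s+(\hat W^1)^*_{t-s})$. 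It is this split that makes the boundary behaviour at $\{m=x^1\}$ tractable, and in particular forces $p^{k,\beta}(m,m-u,\tilde x;t)\to 0$ as $u\to 0^+$.

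Two further points. Your bound $\|p_V^{(k)}\|\le C(\|B\|_\infty T)^k/k!$ is not what the iteration produces: each application of $\mathcal I$ carries a $(t-s)^{-1/2}$ singularity from $\partial_k p_{W^{1*},W}$, so the correct control is $t^{n/2}\Gamma(1/2)^n/\Gamma(1+n/2)$ (Lemma~\ref{lem-maj-unif-p-nl1}), still summable but requiring a different argument. And part~(i) of the Hypothesis demands a \emph{pointwise} Gaussian upper bound uniform in $u>0$, not just an $L^1$ estimate; the paper obtains this by propagating the weighted sup-norm $N(p;t,x_0)$ of~\eqref{maj-p-gaus} through the explicit kernels (Proposition~\ref{pro-maj-pv-g} and Lemma~\ref{maj-ikj-x1m}), a step your outline does not address.
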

	{ As a by product
	{ using Theorems \ref{thme-sous-hyp-dens} and \ref{pro-sous-hyp-dens}}
	 this proposition
	achieves the proof of Theorem \ref{existence-dens}.
The main tool for the proof of this proposition is an integral representation of the density:
\begin{pro}
\label{rem-dens-mal}
 For {any  probability measure $\mu_0$} on ${\mathbb R}^d,$
for all $t>0$,
\begin{equation}
\label{rep-p_v}
p_V=p^0-\sum_{k=m,1,\cdots, d}
 (p^{k,\alpha}+   p^{k,\beta})
\end{equation}
where  the various $p$ are defined by
($\partial_k$ is the derivative with respect to  $k= m, x^1,...,x^d$ and $B^m=B^1$):
\begin{align*}
 &p^0(m,x;t):={ \int_{{\mathbb R}^d}p_{W^{*1},W}(m-x_0^1,x-x_0;t)\mu_0(dx_0)},
  \\
&{ p^{k,\alpha}}(m,x;t):=\int_0^t{\int_{\mathbb{R}^{d+1}} {\mathbf 1}_{b <m}B^{k}(a)\partial_{k}p_{W^{*1},W}\left(m-a^1,x-a;t-s\right)p_V(b,a;s)dbda}ds,
\\
&{ p^{k,\beta}}(m,x;t):=\int_0^t \int_{{\mathbb R}^{(d+1)}}
{\mathbf 1}_{b<m}
 B^{k}(a)\partial_{k}p_{W^{*1},W}(b-a^1,x-a;t-s)p_V(m,a;s)
dbda ds,
\end{align*}
\end{pro}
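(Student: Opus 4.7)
The strategy is to characterize $p_V(\cdot,\cdot;t)$ weakly by duality: for every test function $\phi\in C_c^\infty(\mathbb{R}^{d+1})$, establish that $\mathbb{E}[\phi(V_t)]$ equals the integral of $\phi$ against the right-hand side of \eqref{rep-p_v}, then invoke uniqueness of Lebesgue densities to conclude. The mechanism is a one-step Duhamel--It\^o expansion around the drift-free dynamics $\tilde{X}_r:=X_0+W_r$, $\tilde{M}_r:=\sup_{u\leq r}\tilde{X}^1_u$. The pair $(\tilde{M},\tilde{X})$ is Markov, and its transition kernel $q^{0}_{\tau}$ from $(m,x)$ to $(m',x')$ splits into a continuous part supported on $\{m'>m\}$ with density $p_{W^{*1},W}(m'-x^1,x'-x;\tau)$ (the running max strictly exceeds $m$), together with an atomic component at $m'=m$ whose marginal in $x'$ equals $\int_{-\infty}^{m}p_{W^{*1},W}(b-x^1,x'-x;\tau)\,db$ (the max stays below $m$).

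Introduce the backward candidate $\Phi(s,m,x):=\int \phi(m',x')\,q^{0}_{t-s}((m,x),d(m',x'))$, i.e.\ the drift-free conditional expectation. Then $\Phi$ solves $\partial_s\Phi+\tfrac12\Delta_x\Phi=0$ on the open set $\{m>x^1\}$, with the reflection-type condition at $\{m=x^1\}$ inherited from the atomic part of $q^0$. Apply It\^o's formula to $s\mapsto\Phi(s,V_s)$ under $\mathbb{P}$ (with $dX_s=B(X_s)\,ds+dW_s$, $A=I_d$): the drift-free part of the generator cancels $\partial_s\Phi$, the martingale piece vanishes in expectation, and only the drift mismatch and the running-max contribution survive,
\begin{align*}
\mathbb{E}[\phi(V_t)]=\mathbb{E}[\Phi(0,V_0)]+\mathbb{E}\int_0^t B(X_s)\cdot\nabla_x\Phi(s,V_s)\,ds+\mathbb{E}\int_0^t\partial_m\Phi(s,V_s)\,dM_s.
\end{align*}
The first term unfolds to $\int \phi\,p^0\,dm\,dx$.

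For the $ds$-term, Fubini together with the existence of the density $p_V(b,a;s)$ rewrites the expectation as a $(s,b,a)$-integral against $p_V$; an integration by parts in $a^k$ transfers the $\partial_{x^k}$ derivative off $\Phi$ onto the drift-free kernel. Splitting $q^0$ into its continuous and atomic components generates exactly the two families: the continuous piece pairs $p_{W^{*1},W}(m-a^1,x-a;t-s)$ with $\mathbf{1}_{b<m}p_V(b,a;s)$, giving $p^{k,\alpha}$ for $k=1,\ldots,d$; the atomic piece pairs $p_{W^{*1},W}(b-a^1,x-a;t-s)$ with $\mathbf{1}_{b<m}p_V(m,a;s)$, giving $p^{k,\beta}$. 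The $dM_s$-term produces the $k=m$ contributions (with convention $B^m:=B^1$): since $A=I_d$, Theorem~\ref{pro-sous-hyp-dens} recasts it as a Lebesgue-$ds$ integral carried by the trace $\{M_s=X^1_s\}$ against $p_V(m,m,\tilde{x};s)$, and the same integration-by-parts scheme applied in the $m$-variable, now using the explicit $\partial_m$ of the two pieces of $q^0$, yields the $k=m$ summands in the $(\alpha)$ and $(\beta)$ form respectively.

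The principal obstacle is the simultaneous handling of the atomic/continuous split of $q^0$ and the boundary matching on $\{m=x^1\}$: the integrations by parts generate boundary fluxes on the diagonal that must combine \emph{exactly} with the $dM_s$ contribution coming from Theorem~\ref{pro-sous-hyp-dens} in order to reconstitute the $k=m$ terms with the coefficient $B^1$. This boundary matching is the quantitative reflection of the non-Markov coupling between $M$ and $X^1$. Once it is set up, the identity holds against arbitrary $\phi\in C_c^\infty(\mathbb{R}^{d+1})$, and uniqueness of the density yields the representation \eqref{rep-p_v}.
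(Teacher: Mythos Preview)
Your Duhamel--It\^o scheme is a natural idea, but the proposal has a genuine logical gap and a structural misattribution.

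\textbf{Circularity.} You invoke Theorem~\ref{pro-sous-hyp-dens} to convert the $dM_s$-integral into a $ds$-integral against the trace density $p_V(m,m,\tilde{x};s)$. But Theorem~\ref{pro-sous-hyp-dens} is stated under Hypothesis~\ref{hyp-dens-outil}, and in the paper's architecture Proposition~\ref{rem-dens-mal} is precisely the tool used to \emph{establish} Hypothesis~\ref{hyp-dens-outil} when $A=I_d$ (it feeds into Proposition~\ref{cont-ui--dens}, which in turn proves Theorem~\ref{existence-dens}). So calling Theorem~\ref{pro-sous-hyp-dens} here is circular.

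\textbf{The $k=m$ terms do not come from $dM_s$.} Every summand $p^{k,\alpha},p^{k,\beta}$ in \eqref{rep-p_v} carries an explicit factor $B^k(a)$, with the convention $B^m=B^1$. In your It\^o expansion the $dM_s$-term is $\mathbb{E}\int_0^t\partial_m\Phi(s,V_s)\,dM_s$, which has no $B$ factor whatsoever; it therefore cannot produce $p^{m,\alpha},p^{m,\beta}$. In fact, if $\Phi$ is the genuine drift-free backward expectation, a direct computation shows $\partial_m\Phi(s,m,x)=\int\partial_m\phi(m,x')\bigl[\int_{-\infty}^{m}p_{W^{*1},W}(b-x^1,x'-x;t-s)\,db\bigr]dx'$ (the boundary contributions from the continuous and atomic pieces of $q^0$ cancel), and this vanishes on $\{m=x^1\}$ because $p_{W^{*1},W}(b',\cdot;\tau)$ is supported on $b'\geq 0$. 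Since $dM_s$ is carried by $\{M_s=X^1_s\}$, the $dM_s$-integral is zero. The $k=m$ contributions actually emerge from the $B^1\partial_{x^1}\Phi$ piece of the $ds$-integral: the kernel $p_{W^{*1},W}(\,\cdot\,-x^1,\,\cdot\,-x;\tau)$ depends on $x^1$ through both its first and its second argument, so $\partial_{x^1}$ produces a $\partial_b p_{W^{*1},W}$ term (which is $\partial_m$) \emph{and} a $\partial_{a^1}p_{W^{*1},W}$ term, each multiplied by $B^1$.

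\textbf{How the paper proceeds instead.} The paper sidesteps both issues by never writing a $dM_s$-integral. It uses Girsanov's theorem to transfer the law of $(M_t,X_t)$ under $\mathbb{P}$ to that of $(x_0^1+W^{1*}_t,x_0+W_t)$ under $Z\cdot\mathbb{P}$, then writes $Z_t=1+\delta(u)$ with $u_s=Z_sB(x_0+W_s)$ and applies the Malliavin duality $\mathbb{E}[G\,\delta(u)]=\mathbb{E}[\langle DG,u\rangle_{\mathbb{H}}]$. The Malliavin derivative $D_s^1W^{1*}_t=\mathbf{1}_{[0,\tau]}(s)$ is what generates the $\partial_mF$ terms, automatically paired with the factor $B^1(x_0+W_s)Z_s$; after conditioning on $\mathcal{F}_s$ and undoing Girsanov, this yields Lemma~\ref{firststep}. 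Integrations by parts in $(b,a)$ then identify $p^{k,\alpha},p^{k,\beta}$ directly, using only the existence of $p_V$ and nothing about its boundary behaviour.
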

{ where $p_{W^{*1},W}(.,.;t)$ is the density of the distribution of $(\sup_{s\leq t}W^1_s,W_t)$ for $t \geq 0$, see Appendix \ref{brownian}}


\subsection{Integral representation of the density: proof of Proposition \ref{rem-dens-mal}}
\label{sec-dens-maj}

{Let $t >0$ be fixed}. { Firstly, we assume that $\mu_0=\delta_{x_0},~~x_0$ being fixed in ${\mathbb R}^d.$}
\\
{ According to Lemma \ref{lem-cont-al} below} and using the fact that $B$ is bounded,
$\forall t\in [0,T]$, 
 the functions 
  $ p^{k,\gamma}\in^\infty L\left([0,T], L^1(\mathbb{R}^{d+1})\right)$ 
 for $\gamma=\alpha,\beta$ .
\\
Let $F\in C^1_b({\mathbb R}^{d+1},{\mathbb R})$ with compact support. We will prove 
\begin{equation}
\label{devF(M,B)}
\mathbb{E}_\mathbb{P}[F(M_t,X_t)]=
\int_{\mathbb{R}^{d+1}}F(m,x)\left(p^0-\sum_{k=m,x^1,...,x^d}(p^{k,\alpha}+p^{k,\beta})(m,x,t)\right)dmdx.
\end{equation}
Using Malliavin calculus we obtain the following decomposition:
\begin{lem}
\label{firststep}
{\footnotesize
\begin{align*}
{\mathbb E}_\mathbb{P} \left[F(M_t,X_t)\right]
&= \int_{{\mathbb R}^{d+1}}  F(x_0^1+b,x_0+a)p_{W^{*1},W}(b,a;t)dbda\nonumber
\\
&+  \int_0^t {\mathbb E}_\mathbb{P}
\left[\int_{{\mathbb R}^{d+1}}
\partial_mF \left(X^1_s + b,X_s+ a\right)
{\mathbf 1}_{\{ M_s <X^1_s + b\}}  B^1(X_s)p_{W^{*1},W}(b,a;t-s)dbda
 \right]ds
\nonumber
\\
&+   \int_0^t {\mathbb E}_\mathbb{P}\left[\int_{{\mathbb R}^{d+1}}
{ \partial_{k}}  F\left(\max \left( M_s, X_s^1 + b\right),X_s+ a\right)
 B^k(X_s)p_{W^{*1},W}(b,a;t-s)dbda \right]ds.
\end{align*}
}
\end{lem}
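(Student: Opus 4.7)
The plan is to apply It\^o's formula to the function
\[
\Phi(s,m,x) := \int_{\mathbb{R}^{d+1}} F(\max(m,x^1+b), x+a)\, p_{W^{*1},W}(b,a;t-s)\, db\, da
\]
along the trajectory of $(M_s, X_s)$ and to exploit the martingale property of $\Phi$ under the driftless dynamics. One recognises $\Phi(s,m,x)$ as $\mathbb{E}[F(M_t^0, X_t^0)\mid M_s^0=m, X_s^0=x]$ for the driftless Brownian motion $X^0 := x_0 + W$ with running maximum $M^0$, so $\Phi(s, M_s^0, X_s^0)$ is a $\mathbb{P}$-martingale by the strong Markov property of $W$. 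At the endpoints, $\Phi(0,x_0^1,x_0) = \int F(x_0^1+b, x_0+a)\, p(b,a;t)\, db\, da$ (using $b \ge 0$ $p$-a.e., hence $\max(x_0^1, x_0^1+b) = x_0^1+b$), which is the first term of the Lemma, while $\Phi(s, M_s, X_s) \to F(M_t, X_t)$ as $s \to t^-$ because $p(\cdot;t-s) \to \delta_0$ and $M_t \ge X_t^1$.

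Before applying It\^o I would establish two analytic facts about $\Phi$: on the open wedge $\{m > x^1\}$, $\Phi$ satisfies the backward heat equation $\partial_s \Phi + \tfrac12 \Delta_x \Phi = 0$, and on the diagonal $\{m = x^1\}$, $\partial_m \Phi = 0$. Both arise by equating to zero the finite-variation part of the It\^o decomposition of $\Phi(s, M_s^0, X_s^0)$. The boundary identity is also verified directly: differentiation under the integral gives $\partial_m \Phi(s,m,x) = \int \mathbf{1}_{\{m > x^1+b\}}\, \partial_m F(m, x+a)\, p(b,a;t-s)\, db\, da$, which vanishes at $m = x^1$ because $p$ is supported on $\{b \ge 0\}$.

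Applying It\^o's formula to $\Phi(s, M_s, X_s)$ with $A = I_d$, so $dX_s^i = B^i(X_s)\,ds + dW_s^i$ and $d\langle X^i, X^j\rangle_s = \delta_{ij}\, ds$, yields
\[
d\Phi = \bigl(\partial_s \Phi + \tfrac12 \Delta_x \Phi\bigr)\,ds + \partial_m \Phi\, dM_s + B^i(X_s)\,\partial_{x^i}\Phi\, ds + \partial_{x^i}\Phi\, dW_s^i.
\]
The first parenthesis vanishes by the PDE (for a.e.\ $s$, since $(M,X)$ spends a.e.\ time in the interior), the $dM_s$-integral vanishes because $dM_s$ is carried by $\{M_s = X_s^1\}$ where $\partial_m \Phi \equiv 0$, and the $dW$-integral has zero expectation. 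Taking the expectation leaves $\mathbb{E}[F(M_t, X_t)] - \Phi(0, x_0^1, x_0) = \mathbb{E}\int_0^t B^i(X_s)\, \partial_{x^i}\Phi(s, M_s, X_s)\, ds$. Computing $\partial_{x^i}\Phi$ via the chain rule through the $\max$ gives, for $i=1$, the splitting $\partial_{x^1}\Phi = \int \mathbf{1}_{\{x^1+b > m\}}\, \partial_m F(x^1+b, x+a)\, p\, db\, da + \int \partial_{x^1}F(\max(m, x^1+b), x+a)\, p\, db\, da$, and, for $i \ge 2$, only the direct term $\int \partial_{x^i}F(\max,\,x+a)\, p\, db\, da$. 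After multiplication by $B^i$, the kink contribution (only for $i=1$) produces the Lemma's second term, while the direct contributions, assembled via Einstein's convention over $i=1,\ldots,d$, produce the third.

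The principal technical obstacle is the limited regularity of $\Phi$ across the diagonal $\{m = x^1\}$: the map $\max$ is only Lipschitz, so $\partial^2_{x^1 x^1}\Phi$ picks up a singular contribution from differentiating the indicator $\mathbf{1}_{\{x^1+b > m\}}$, and $p_{W^{*1}, W}(\cdot;\tau)$ is singular as $\tau \to 0$. I would circumvent this by first mollifying $\max$ by a smooth convex family $\max_\varepsilon$, proving the identity for the associated $\Phi_\varepsilon$ on intervals $s \in [0, t-\eta]$, and then passing to the limit $\varepsilon, \eta \to 0$ via dominated convergence, using the Gaussian tail bounds on $p_{W^{*1},W}$ from the reflection principle (Appendix \ref{brownian}) together with the boundedness of $B$, $F$, and $\nabla F$. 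The density result of \cite{LaureMo} ensures that the set $\{M_t = X_t^1\}$ carries zero mass under the law of $V_t$, so no extra boundary contribution survives in the limit.
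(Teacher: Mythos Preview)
Your argument is correct and follows a genuinely different route from the paper. The paper proceeds via Girsanov and Malliavin calculus: it writes $\mathbb{E}_\mathbb{P}[F(M_t,X_t)]=\mathbb{E}_\mathbb{P}[F(x_0^1+W^{1*}_t,x_0+W_t)Z_t]$ for the exponential martingale $Z$, expands $Z_t=1+\delta(u)$ with $u=ZB(x_0+W)$, and then applies the Malliavin integration-by-parts formula $\mathbb{E}[G\delta(u)]=\mathbb{E}[\langle DG,u\rangle_{\mathbb H}]$ using the explicit derivative $D_sW^{1*}_t=\mathbf 1_{[0,\tau]}(s)$; the independence of Brownian increments and a reverse Girsanov step then yield the three integrals. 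You instead recognise the conditional expectation $\Phi(s,m,x)$ for the driftless pair $(M^0,X^0)$, use its martingale property to obtain the backward heat equation and the Neumann-type condition $\partial_m\Phi|_{m=x^1}=0$, and apply the classical It\^o formula to $\Phi(s,M_s,X_s)$ under the drifted dynamics so that only the $B^i\partial_{x^i}\Phi$ terms survive.

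Each approach has a cost. The paper's route needs the Malliavin machinery (the $\mathbb L^{1,2}$ property of $u$, the chain rule, the derivative of the running maximum) but handles the kink of $\max$ cleanly through $D_sW^{1*}_t$, so no mollification is needed. Your route is more elementary---pure It\^o plus the reflection-principle density---but you must justify the $C^{1,2}$ regularity of $\Phi$ on $\{m\ge x^1\}$ for $s<t$, which is where your mollification $\max_\varepsilon$ and the cut-off at $s\le t-\eta$ come in; the Gaussian bounds of Appendix~\ref{brownian} do control these limits as you claim. Note that on the open wedge one can in fact check directly that $\Phi\in C^{1,2}$ (the boundary contribution from $b=m-x^1$ in $\partial_{x^1x^1}^2\Phi$ is finite because $p_{W^{*1},W}(m-x^1,\cdot;t-s)$ is bounded for $m-x^1>0$, $t-s>0$), so the mollification is really only needed to cross the diagonal and to pass $s\to t$.
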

\begin{proof}
Let $Z$ be the exponential martingale 
solution of
\begin{align}
\label{rep-dens-z}
Z_t& =
1+ 
\int_0^t Z_s B^k(x_0+W_s)dW^k_s.
\end{align}
{ As previously Einstein's convention is used.}
{Let ${\mathbb Q}=Z{\mathbb P},$
according to {Girsanov's} Theorem,
 using \eqref{rep-dens-z},
  $W_B:=\left(W_t^k-\int_0^t B^k(x_0+W_s) ds; k=1,...,d\right)_{t\geq 0}$ 
  is a ${\mathbb Q}$ continuous martingale such that $\langle W_B\rangle_t=t,~~\forall t \geq 0.$ That means that under ${\mathbb Q},$ $W_B$ is a $d$-dimensional Brownian motion. Then the distribution of $X$ (resp. $(M,X))$ under ${\mathbb P}$ is the distribution of
 $W+x_0$ (resp. $(W^{1*}+x_0,W+x_0))$ under ${\mathbb Q}$ and }
\begin{align}
\label{girsanov}
&{\mathbb E}_\mathbb{P}\left [ F(M_t,X_t)\right]=
{\mathbb E}_\mathbb{Q} \left [ F(x_0^1+W^{1*}_t,x_0+W_t)\right]= {\mathbb E}_\mathbb{P} \left [ F(x_0^1+W^{1*}_t,x_0+W_t)Z_t\right].
\end{align}
Let  $G:= F(x_0^1+W^{1*}_t,x_0+W_t)$ and $u:=ZB(x_0+W)$, using \eqref{rep-dens-z}: 
{
\begin{align}
\label{rep-dens-2}
\mathbb{E}_\mathbb{P}[F(M_t,X_t)]={\mathbb E}_\mathbb{P} \left [ F(x_0^1+W^{1*}_t,x_0+W_t)\right]+
{\mathbb E}_\mathbb{P} \left [G\delta(u) \right]
\end{align}
}

{ As a first step we will apply 
\eqref{prop-nualart(ii} (Appendix)
to the second term in \eqref{rep-dens-2}. Thus
we have to check that the pair $(G,u)\in \DD^{1,2}\times {\mathbb L}^{1,2}$:
{ $F$  being bounded and smooth, $G\in \DD^{1,2}$; }
%
and according to Lemma \ref{lem-z-l12}, the process $u$ belongs to ${\mathbb L}^{1,2}$.
\\
 Using \eqref{der-sup} (${\tau:=\inf\{s, W_s^{1*} =W^{1*}_t\}}$)
the pair  $(W^{1*}_t,W_t)$ belongs to ${\mathbb D}^{1,2}$
with Malliavin gradient:
\begin{align*}
&D_sW^{1*}_t=\left({\mathbf 1}_{[0,\tau]}(s),0,.....,0\right),~
D_sW_t^k=(\delta_{j=k},~~j=1,...,d){\mathbf 1}_{[0,t]}(s),~~k=1,...,d.
\end{align*}
%
}
Using the chain rule:
\begin{align*}
 \langle DG,u\rangle_{{\mathbb H}}&=
 \int_0^{t}\partial_m F({ x^1_0}+W^{*1}_t,x_0+W_t) {\mathbf 1}_{\{W^{1*}_s<W^{1*}_t\}}B^1(x_0+W_s){ Z_s} ds
 \\
 &+ \int_0^{t}{ \partial_{k}}  F({ x^1_0}+W^{*1}_t,x_0+W_t)B^k(x_0+W_s){ Z_s}ds.
\end{align*}
%
We are now in position to apply  \eqref{prop-nualart(ii} $E_\mathbb{P}[G\delta(u)]=E_\mathbb{P}
[\langle DG,u\rangle_\HH]$:
{
\begin{eqnarray}
\label{rep-dens-3}
& & \mathbb{E}_\mathbb{P}[G\delta(u)]=
\mathbb{E}_\mathbb{P}\left[\int_0^t
   \partial_m F (x_0^1+W^{1*}_t,x_0+W_t){\mathbf 1}_{\{ W^{1*}_s <W^{1*}_t \}}
   B^1(x_0+W_s)Z_sds\right]\nonumber
   \\
   & + &\mathbb{E}_\mathbb{P}\left[\int_0^t
  { \partial_{k}} F (x_0^1+
    W^{1*}_t,x_0+W_t)B^k(x_0+W_s)
    Z_s ds\right].
\end{eqnarray}
}
Plugging identity \eqref{rep-dens-3} into
{right hand of \eqref{rep-dens-2} and using Fubini Theorem to commute the integrals in $ds$ and $d\mathbb{P}$}, we obtain
{
\begin{eqnarray}
\label{34-32}
&&{\mathbb E}_\mathbb{P} \left [F(M_t,X_t)\right]
={\mathbb E}_\mathbb{P} \left [ F(x_0^1+W^{1*}_t,x_0+W_t)\right]
\\
&+& \int_0^t {\mathbb E}_\mathbb{P}\left[\partial_m F (x_0^1+W^{1*}_t,x_0+W_t){\mathbf 1}_{\{ W^{1*}_s <W^{1*}_t \}} Z_sB^1(x_0+W_s) \right]ds\nonumber
 \nonumber
\\
&+& \int_0^t {\mathbb E}_\mathbb{P}\left[ \partial_k  F (x_0^1+W^{1*}_t,x_0+W_t)Z_sB^k(x_0+W_s) \right]ds.
\nonumber
\end{eqnarray}
}
As a second step we use the independence of the increments of the Brownian motion in order to make appear the density of $(W^{1*}_{t-s},W_{t-s})$.
{Recall \eqref{X1tu}}: $\hat W_{t-s}:=W_t-W_s$ and $(\hat W^1)^*_{t-s}= \max_{s \leq u \leq t} \left(W^1_u-W^1_s\right).$
Then $W^{1*}_t=
\max \left( W^{1*}_s,W^1_s +
 \max_{s \leq u \leq t} 
 \left(W^1_u-W^1_s\right)\right)
 =\max \left( W^{1*}_s,
 W^1_s +(\hat W^1)_{t-s}^*\right)
$ 
  so the  expression \eqref{34-32} becomes
\begin{align*}
&{\mathbb E}_\mathbb{P} \left [ F(M_t,X_t)\right]
= {\mathbb E}_\mathbb{P} \left [F(x_0^1+W^{1*}_tx_0+,W_t)\right]+
 \\
&  \int_0^t {\mathbb E}_\mathbb{P}
\left[
\partial_m F \left(x_0^1+ W^1_s + 
(\hat W^1)_{t-s}^*
,x_0+W_s+ \hat W_{t-s}\right)
{\mathbf 1}_{\{ W^{1*}_s <W^1_s + (\hat W^1)_{t-s}^*\}}  Z_sB^1(x_0+W_s)
 \right]
ds\nonumber
\\
&+  \int_0^t {\mathbb E}_\mathbb{P}\left[
{\partial_{k}} F
\left(\max \left( x_0^1+W^{1*}_s, 
 x^1_0+W^1_s +
(\hat W^1)_{t-s}^* \right),x_0+W_s+\hat W_{t-s}\right)
 Z_sB^k(x_0+W_s) \right]ds.
\end{align*}
The random vector  $\left(
(\hat W^1)_{t-s}^*
,\hat W_{t-s}\right)$ is independent of the $\sigma-$field $\F_s$ and has the same distribution as the pair $(W^{1*}_{t-s},W_{t-s}).$
Let $p_{W^{*1},W}(.,.;t-s)$ be the density
of its law, { and express the expectation with this density:}
{\footnotesize
\begin{align*}
&{\mathbb E}_\mathbb{P} \left [ F(M_t,X_t)\right]
= \int_{{\mathbb R}^{d+1}}  F(x_0^1+b,x_0+a)p_{W^{*1},W}(b,a;t)dbda\\
&+  \int_0^t {\mathbb E}_\mathbb{P}
\left[\int_{{\mathbb R}^{d+1}}
\partial_m F \left(x_0^1+ W^1_s + b,x_0+W_s+ a\right)
{\mathbf 1}_{\{ W^{1*}_s <W^1_s + b\}}  Z_sB^1(x_0+W_s)p_{W^{*1},W}(b,a;t-s)dbda
 \right]
ds
\\
&+   \int_0^t {\mathbb E}_\mathbb{P}\left[\int_{{\mathbb R}^{d+1}}
{\partial_{k}}  F\left(x_0^1+\max \left( W^{1*}_s, W^1_s + b\right),x_0+W_s+ a\right)
 Z_sB^k(x_0+W_s)p_{W^{*1},W}(b,a;t-s)dbda \right]ds.
 \end{align*}
 }
 Using {Girsanov's} Theorem 
 {for $Z.\mathbb{P}=\mathbb{Q},$ since the law of  $(M,X)$
 under $\mathbb{P}$ is the law of $(x^1_0+W^{1*},x_0+W)$, under $\mathbb{Q}$,}  using
{ the equality   \eqref{girsanov}:} 
{\footnotesize
\begin{align*}
{\mathbb E}_\mathbb{P} \left[F(M_t,X_t)\right]
&= \int_{{\mathbb R}^{d+1}}  F(x_0^1+b,x_0+a)p_{W^{*1},W}(b,a;t)dbda\nonumber
\\
&+  \int_0^t {\mathbb E}_\mathbb{P}
\left[\int_{{\mathbb R}^{d+1}}
\partial_mF \left(X^1_s + b,X_s+ a\right)
{\mathbf 1}_{\{ M_s <X^1_s + b\}}  B^1(X_s)p_{W^{*1},W}(b,a;t-s)dbda
 \right]ds
\nonumber
\\
&+   \int_0^t {\mathbb E}_\mathbb{P}\left[\int_{{\mathbb R}^{d+1}}
{ \partial_{k}}  F\left(\max \left( M_s, X_s^1 + b\right),X_s+ a\right)
 B^k(X_s)p_{W^{*1},W}(b,a;t-s)dbda \right]ds.
\end{align*}
}
\end{proof}
We are now in position to achieve the proof of 
Proposition  \ref{rem-dens-mal}.
Using some  suitable translations of the variables $(a,b),$
 $
{\mathbb E}_\mathbb{P} \left [F(M_t,X_t)\right]
= {\sum_{k=0}^{d} I_k}+I_m.$
where
\begin{align}
\label{rep-dens-4}
&I_0=\int_{{\mathbb R}^{d+1}}  F(b,a)p_{W^{*1},W}(b-x_0^1,a-x_0;t)dbda,
\\
&I_m=\int_0^t {\mathbb E}_\mathbb{P}
\left[\int_{{\mathbb R}^{d+1}}
\partial_m F(b,a)
{\mathbf 1}_{\{ M_s < b\}}  B^1(X_s)p_{W^{*1},W}\left(b-X_s^1,a-X_s;t-s\right)dbda
 \right]
ds\nonumber
\end{align}
and for { $k=1,...,d,$}
\begin{align*}
I_k&= \int_0^t {\mathbb E}_\mathbb{P}\left[\int_{{\mathbb R}^{d+1}}
{{\partial_{k}} }F\left(\max \left( M_s, b\right),a\right)
 B^{k}(X_s)p_{W^{*1},W}(b-X_s^1,a-X_s;t-s)dbda \right]ds.
\end{align*}
 Since $B$, $F$ and its derivatives are bounded,
all these integrals are finite.
{Using  \eqref{dens-cas-mb} 
in Appendix}, the function $p_{W^{*1},W}(.,.;t)$ is $C^{\infty}$ on $\bar{\Delta}=\{(b,a),~~b{\geq} a^1_+, ~~(a,b) \in {\mathbb R}^{d+1}\}$.
\\
{ The aim is now to identify the  terms $p^0,~p^{k,\alpha},~p^{k,\beta},~{\mathbf k=m,1,\cdots, d,}$ defined in Proposition \ref{rem-dens-mal}.} 
\\
{\bf 1.}
Firstly we identify $p^0(b,a;t) $ as the factor of $F(b,a)$ in the integrand of $I_0:$
\begin{equation*}
p^0(b,a;t)=  p_{W^{*1},W}(b-x_0^1,a-x_0;t) .
\end{equation*}
{\bf 2.}
We now  deal with $I_k, k=2,\cdots,d.$ 
  Integrating  by parts
 with respect to { $a^k$ between $-\infty $ and $\infty$ in  $I_k$ for  $k=2,...,d$} yields
\begin{align*}
I_k&=-\int_0^t {\mathbb E}_\mathbb{P}\left[\int_{{\mathbb R}^{d+1}}
 F\left(\max \left( M_s,b\right),a\right)
 B^{k}(X_s)\partial_{k}p_{W^{*1},W}(b-{ X^1_s},a-X_s;t-s)dbda \right]ds\nonumber
 \\
 &=-\int_0^t {\mathbb E}_\mathbb{P}\left[\int_{{\mathbb R}^{d+1}}\1_{\{b> M_s\}}
 F\left(b,a\right)
 B^{k}(X_s)\partial_{k}p_{W^{*1},W}(b-{ X^1_s},a-X_s;t-s)dbda \right]ds
 \\
 &-\int_0^t {\mathbb E}_\mathbb{P}\left[\int_{{\mathbb R}^{d+1}}
 \1_{\{b< M_s\}}F\left( M_s,a\right)
 B^{k}(X_s)\partial_{k}p_{W^{*1},W}(b-{ X^1_s},a-X_s;t-s)dbda \right]ds
 \nonumber
\end{align*}
{ We identify
$-p^{k,\alpha}(b,a,t)$
 inside the integral on the set
  $(b>M_s).$  
  { Concerning the integral on the set
  $(b<M_s)$,} we introduce the density of $(M_s,X_s)$ and identify    $-p^{k,\beta}(m,a;t)$
as factor of  $F(m,a)$.}
\\ 
\\
{  {\bf 3.} Finally, we identify the $p^{m,\gamma}$ and $p^{1,\gamma}$, $\gamma=\alpha, \beta$ which come from the sum of $I_m$ and $I_1$. Note that 
$p_{W^{*1},W}\left(b-X_s^1,a-X_s;t-s\right)=0$
on the set ${\{b<a^1\}}.$
} 
  Integrating  by parts
 with respect to $b$ between $\max\left({ a^1},M_s\right) $ and $\infty$ in $I_m$ yields
\begin{align}
\label{im}
&I_m=-\int_0^t {\mathbb E}_\mathbb{P}\left[\int_{{\mathbb R}^d}F\left( \max\left({ a^1}, M_s\right),a\right)B^1(X_s)p_{W^{*1},W}\left(\max\left({ a^1},M_s\right)-X_s^1,a-X_s;t-s\right)da\right]ds\nonumber
\\
&-\int_0^t {\mathbb E}_\mathbb{P}\left[\int_{{\mathbb R}^{d+1}}{\mathbf 1}_{M_s<b}F\left( b,a\right)B^1(X_s)\partial_{m}p_{W^{*1},W}\left(b-X_s^1,a-X_s;t-s\right)dbda\right]ds
\end{align}
   { Integrating  by parts} with respect to $a^1$ between $-\infty $ and $b$ in $I_1$ yields
\begin{align}
\label{i1}
I_1&= \int_0^t {\mathbb E}_\mathbb{P}\left[\int_{{\mathbb R}^{d}}
F\left( \max\left( M_s,b\right),b,\tilde a\right)
 B^1(X_s)p_{W^{*1},W}(b- X^1_s,b- X^1_s,\tilde a-\tilde X_s;t-s)dbd\tilde a \right]ds
 \nonumber
 \\
&-\int_0^t {\mathbb E}_\mathbb{P}\left[\int_{{\mathbb R}^{d+1}}
 F\left(\max \left( M_s,b\right),a\right)
 B^1(X_s)\partial_{1}p_{W^{*1},W}(b- X^1_s,a-X_s;t-s)dbda \right]ds.
\end{align}
(i) The term  $p^{m,\beta}{(b,a,t)}$ comes from the second term in $I_m$ \eqref{im}
 as the  factor of $F(b,a)$:
$$-\int_0^t {\mathbb E}_\mathbb{P}\left[\int_{{\mathbb R}^{d+1}}{\mathbf 1}_{M_s<b}F\left( b,a\right)B^1(X_s){\partial_m}p_{W^{*1},W}\left(b-X_s^1,a-X_s;t-s\right)dbda\right]ds
$$
(ii) The terms  $-p^{1,\alpha}(b,a,t)$ and  $-p^{1,\beta}(b,a;t)$ come from the second term
 in $I_1$ \eqref{i1}:
$$-\int_0^t {\mathbb E}_\mathbb{P}\left[\int_{{\mathbb R}^{d+1}}
 F\left(\max \left( b,M_s\right),a\right)
 B^1(X_s)\partial_{1}p_{W^{*1},W}(b-{ X^1_s},a-X_s;t-s)dbda \right]ds.
$$
Inside the integral on the set $(M_s<b)$ { we identify $-p^{1,\alpha}(b,a,t)$ and inside
the integral on the set $(M_s>b)$ we identify $-p^{1,\beta}(b,a;t)$
as the factor of $F(b,a)$,} { respectively  
as the factor of $F(M_s,a)$.}
\\
(iii)
The term  $-p^{m,\alpha}(b,a,t)$ comes from the sum of first terms in $I_1$ \eqref{i1}
and $I_m$ \eqref{im}.

 Now  we replace the variable $b$ by $a_1$, $dbd\tilde a$ by $da$ in the first terms of
$I_m$ and $I_1$:
{\footnotesize
\begin{align*}
I^1_m&=-\int_0^t {\mathbb E}_\mathbb{P}\left[\int_{{\mathbb R}^d}F\left( \max\left({ a^1}, M_s\right),a\right)B^1(X_s)p_{W^{*1},W}\left(\max\left({ a^1},M_s\right)-X_s^1,a-X_s;t-s\right)da\right]ds
\\
I^1_1&= \int_0^t {\mathbb E}_\mathbb{P}\left[\int_{{\mathbb R}^{d}}
F\left( \max\left( M_s,a^1\right),a\right)
 B^1(X_s)p_{W^{*1},W}(a^1-{ X^1_s},a-X_s;t-s)d a \right]ds.
\end{align*}
}
Note that
%
\begin{align*}
&-p_{W^{*1},W}\left(\max\left(a^1, M_s\right)-X_s^1,a-X_s;t-s\right)
+
p_{W^{*1},W}(a^1-X^1_s,a-X_s;t-s)
\\
&=\left[-p_{W^{*1},W}\left(M_s-X_s^1,a-X_s;t-s\right)+p_{W^{*1},W}\left(a^1-X_s^1,a-X_s;t-s\right)\right] {\mathbf 1}_{M_s >a^1}
\\
&=-\int_{a^1}^{M_s}
{\partial_m}p_{W^{*1},W}\left(b-X^1_s, a-X_s,t-s\right){ db} {\mathbf 1}_{M_s >a^1}.
\end{align*}
%
Then the sum of  $I_m^1$ and $I_1^1$ is:
\begin{equation*}
-\int_0^t {\mathbb E}_\mathbb{P}\left[\int_{{\mathbb R}^{d+1}}F(M_s,a)B^1(X_s)
{ \partial_m}p_{W^{*1},W}\left(b-X_s^1,a-X_s;t-s\right) {\mathbf 1}_{M_s>b >a^1}dadb\right]ds.
\end{equation*}
We introduce the density of the law of the pair $(M_s,X_s)$ and
 we  identify $-p^{m,\alpha}(m,a;t)$  as
the factor of $F(m,a).$

These three steps  achieve the proof of Proposition \ref{rem-dens-mal} when $\mu_0=\delta_{x_0}.$

Finally when  $\mu_0$ is the law of $X_0,$  we have
$
	p_V(m,w;t,\mu_0)=\int_{{\mathbb R}^d} p_V(m,x;t,\delta_{x_0}) \mu_0(dx_0).
$\\
	Then integrating with respect to $\mu_0$ the expression obtained in { \eqref{rep-p_v}} for $p_V(m,x;t,\delta_{x_0})$
	achieves the proof of Proposition \ref{rem-dens-mal} { for any initial law $\mu_0$}.
\hfill$\Box$

\subsection{ Proof of Proposition \ref{cont-ui--dens}}
\label{regul}

{ Using some idea's used in Garroni section V.3.2 }
let us introduce the linear applications on $L^{\infty}([0, T] ,dt, L^1({\mathbb R}^{d+1},dmdx)),$ $k=m,1,\cdots,d$:
\begin{align}
\label{def-al}
&{ \II^{k,\alpha}}[p](m,x;t):=\int_0^t{\int_{\mathbb{R}^{d+1}} {\mathbf 1}_{b <m}B^{k}(a)\partial_{k}p_{W^{*1},W}\left(m-a^1,x-a;t-s\right)}p(b,a;s)dbdads,
\\
&{ \II^{k,\beta}}[p](m,x;t):=\int_0^t \int_{{\mathbb R}^{d+1}}
{\mathbf 1}_{b<m}
 B^{k}(a)\partial_{k}p_{W^{*1},W}(b-a^1,x-a;t-s)p(m,a;s)
dbda ds.
\nonumber
\end{align}
Let us introduce the functions, 
defined by induction:
\begin{align}
\label{def-suite}
p_0(m,x;t,\mu_0)= \int_{\mathbb{R}^d}p_{W^{1*},W}(m-x_0^1,x-x_0;t)\mu_0(dx_0),~~
p_n=-\sum_{k=m,1,\cdots,d}\left( { p^{k,\alpha}_n} +{ p^{k,\beta}_n}\right)
\end{align}
and for $k=m,1,\cdots,d$,  $j=\alpha,\beta$ and $n \geq 1$, $
{ p_{n+1}^{k,j}}(m,x;t):={\mathcal I}^{k,j}[p_n](m,x;t).$
Let us denote the operator
\begin{equation}
\label{defcalI}
\II:=-\sum_{j=\alpha,\beta;k=m,1,...,d}\II^{k,j}.
\end{equation} 
Moreover { one remarks that this means $p_{n+1}=\II(p_n)$ 
and Proposition \ref{rem-dens-mal} leads to}
$p_V=p_0+\II(p_V).$
Let
\begin{align}
\label{def-gamma_n}
{ P_n} := \sum_{k=0}^n p_k,~~n\geq 0
\end{align}
 \begin{pro}
 \label{pro-dvpt-serie-L1}
{ Assume the vector $B$ is bounded}, then for all $T$ 
the sequence $(P_n)_n$  converges in 
  $L^{\infty} ( [0,T],L^1 ({\mathbb R}^{d+1},dxdm))$ to $p_V$.
Moreover $p_V=\sum_{n=0}^{\infty} p_n.$
 \end{pro}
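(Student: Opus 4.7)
The strategy is to recast the identity $p_V=p_0+\II(p_V)$ given by Proposition \ref{rem-dens-mal} as a Volterra fixed-point equation on the Banach space $E_T:=L^{\infty}([0,T],L^1(\mathbb{R}^{d+1},dm\,dx))$, and to prove that the partial sums $P_n=\sum_{k=0}^{n}\II^{k}(p_0)$ form its Neumann series. Accordingly, the proof splits into three steps: a key estimate on the operator $\II$, a summability estimate for its iterates, and identification of the limit with $p_V$.

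\emph{Step 1: a Volterra estimate on $\II$.} The plan is to show that there is a constant $C$ (depending only on $d$ and $\|B\|_{\infty}$) such that, for any $p\in E_T$ and any $t\in(0,T]$,
\begin{equation*}
\|\II(p)(\cdot,\cdot;t)\|_{L^1(\mathbb{R}^{d+1})}\leq C\,\|B\|_{\infty}\int_0^t \frac{\|p(\cdot,\cdot;s)\|_{L^1}}{\sqrt{t-s}}\,ds.
\end{equation*}
To see this, for each $k$ and $j\in\{\alpha,\beta\}$, I would apply Tonelli to bring the $(m,x)$ integration inside and then use the explicit form of $p_{W^{*1},W}$ (given in the Appendix, formula \eqref{dens-cas-mb}) to obtain the heat-kernel-type estimate
\begin{equation*}
\int_{\mathbb{R}^{d+1}} \bigl|\partial_{k}p_{W^{*1},W}(b-a^{1},x-a;t-s)\bigr|\,dx\,db\leq \frac{C}{\sqrt{t-s}}.
\end{equation*}
The indicator $\mathbf{1}_{b<m}$ and the factor $B^k(a)$ do not interfere because they are $L^{\infty}$ in the relevant variables.

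\emph{Step 2: iteration.} Once the bound of Step 1 is established, a standard induction on $n$ using the Beta integral
\begin{equation*}
\int_0^t (t-s)^{-1/2}s^{(n-1)/2}\,ds = B\!\left(\tfrac12,\tfrac{n+1}{2}\right)t^{n/2}
\end{equation*}
yields
\begin{equation*}
\|\II^{n}(p_0)(\cdot,\cdot;t)\|_{L^1}\leq \frac{(C\|B\|_{\infty})^{n}\,T^{n/2}\,\Gamma(1/2)^{n}}{\Gamma(1+n/2)}\,\|p_0\|_{E_T}.
\end{equation*}
Since $p_0$ is a probability density on $\mathbb{R}^{d+1}$ (so $\|p_0\|_{E_T}=1$), the series $\sum_n \|\II^n(p_0)\|_{E_T}$ is dominated by a Mittag–Leffler-type function and therefore converges. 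Consequently $P_n$ converges in $E_T$ to some limit $P\in E_T$, and by continuity of $\II$ in $E_T$ the limit satisfies $P=p_0+\II(P)$.

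\emph{Step 3: identification with $p_V$.} Finally, iterating the fixed-point equation of Proposition \ref{rem-dens-mal} gives, for every $n\geq 0$,
\begin{equation*}
p_V=P_n+\II^{n+1}(p_V).
\end{equation*}
Applying the same estimate as in Step 2 to $\II^{n+1}(p_V)$ (using that $p_V$ is a density, so $\|p_V\|_{E_T}=1$), one obtains $\|\II^{n+1}(p_V)\|_{E_T}\to 0$ as $n\to\infty$. Passing to the limit yields $p_V=P=\sum_{n=0}^{\infty}p_n$ in $E_T$, which is the claim.

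The main technical obstacle is the estimate of Step 1: one must control the $L^1$ norm of the derivatives $\partial_k p_{W^{*1},W}$ uniformly, which relies on the explicit reflection-principle formula for the joint density of Brownian motion and the running maximum of its first coordinate. Once that singular Volterra estimate is in hand, Steps 2 and 3 are routine consequences of the Beta-function iteration.
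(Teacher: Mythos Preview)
Your proposal is correct and follows essentially the same approach as the paper: the Volterra estimate of Step~1 is precisely Lemma~\ref{lem-cont-al} (proved via the Gaussian bound \eqref{maj-derk-p0} on $\partial_k p_{W^{*1},W}$ and Tonelli), and Steps~2--3 are exactly the two inductive bounds \eqref{maj-norme-pn-l1} and \eqref{maj-norme-gamma-l1} of Lemma~\ref{lem-maj-unif-p-nl1}, obtained by iterating the Beta integral and using that $p_0$ and $p_V$ are probability densities. The only cosmetic difference is that the paper states the Volterra bound with $\sup_{u\le s}\|p(\cdot,\cdot;u)\|_{L^1}$ inside the integral rather than $\|p(\cdot,\cdot;s)\|_{L^1}$, but either form suffices for the induction.
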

 The proof  is a consequence of { the two following lemmas.}
\begin{lem}
\label{lem-cont-al}
Let  $j=\alpha,\beta$, $k=m,1,\cdots,d$ and $T>0$
the linear applications ${\mathcal I}^{k,j}$ are continuous on $L^{\infty}([0, T] ,dt,  L^1({\mathbb R}^{d+1},dmdx)):$  there exists a constant $C$ such that for all $p \in L^{\infty}([0, T] ,dt,  L^1({\mathbb R}^{d+1},dmdx)):$ 
\begin{align}
\label{eq:maj-i-racine}
\sup_{s \in [0,t]} \|{\mathcal I}^{k,j}[p](.,.;s)\|_{L^1({\mathbb R}^{d+1},dmdx)}\leq C \int_0^t \frac{1}{\sqrt{t-s}}
\sup_{u \in [0,s]} \|p(.,.;u)\|_{L^1({\mathbb R}^{d+1},dmdx)}ds
\end{align}
{As a consequence,}
\begin{equation}
\label{majII}
\sup_{s \in [0,t]} \|\II[p](.,.;s)\|_{L^1({\mathbb R}^{d+1},dmdx)}\leq 2(d+1)C \int_0^t \frac{1}{\sqrt{t-s}}
\sup_{u \in [0,s]} \|p(.,.;u)\|_{L^1({\mathbb R}^{d+1},dmdx)}ds.
\end{equation}
\end{lem}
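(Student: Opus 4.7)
The plan is to take the $L^1$-norm in $(m,x)$ of $\mathcal{I}^{k,j}[p](\cdot,\cdot;t)$, apply Tonelli to push the modulus inside, and isolate an integral of $|\partial_k p_{W^{*1},W}(\cdot,\cdot;t-s)|$ over $\mathbb{R}^{d+1}$ that is independent of the other variables. The key quantitative input will be the standard Gaussian-type estimate
\[
\int_{\mathbb{R}^{d+1}} \bigl|\partial_k p_{W^{*1},W}(y,z;\tau)\bigr|\,dy\,dz \;\leq\; \frac{C_0}{\sqrt{\tau}},\qquad k=m,1,\dots,d,
\]
which can be extracted from the explicit form of $p_{W^{*1},W}$ referenced as \eqref{dens-cas-mb} in the appendix (the joint density of a $d$-dimensional Brownian motion and the running maximum of its first coordinate is a linear combination of two Gaussian densities, so differentiating once in any space variable produces an extra factor of order $1/\sqrt{\tau}$ after integration).

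For $\mathcal{I}^{k,\alpha}$, I would proceed as follows. By Tonelli,
\[
\bigl\|\mathcal{I}^{k,\alpha}[p](\cdot,\cdot;t)\bigr\|_{L^1(dmdx)} \leq \|B\|_\infty \int_0^t\!\!\int_{\mathbb{R}^{d+1}}\!|p(b,a;s)|\Bigl(\int_{\mathbb{R}^{d+1}}\!\bigl|\partial_k p_{W^{*1},W}(m-a^1,x-a;t-s)\bigr|\,dmdx\Bigr)dbda\,ds,
\]
where the indicator $\mathbf{1}_{b<m}$ has been dropped. The change of variables $m'=m-a^1$, $x'=x-a$ collapses the inner integral to $\|\partial_k p_{W^{*1},W}(\cdot,\cdot;t-s)\|_{L^1}\leq C_0/\sqrt{t-s}$, independently of $a$. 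What remains is $\|B\|_\infty C_0 \int_0^t (t-s)^{-1/2}\|p(\cdot,\cdot;s)\|_{L^1}\,ds$, which dominates each $s$-slice by its supremum and yields \eqref{eq:maj-i-racine}.

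For $\mathcal{I}^{k,\beta}$ the roles of the variables are shuffled but the mechanism is identical. After inserting the modulus and $\mathbf{1}_{b<m}\leq 1$,
\[
\bigl\|\mathcal{I}^{k,\beta}[p](\cdot,\cdot;t)\bigr\|_{L^1(dmdx)} \leq \|B\|_\infty \int_0^t\!\!\int_{\mathbb{R}}\!\!\int_{\mathbb{R}^d}\!|p(m,a;s)|\Bigl(\int_{\mathbb{R}\times\mathbb{R}^d}\!\bigl|\partial_k p_{W^{*1},W}(b-a^1,x-a;t-s)\bigr|\,dbdx\Bigr)dmda\,ds,
\]
and the substitution $y=b-a^1$, $x'=x-a$ again gives the $C_0/\sqrt{t-s}$ factor; the remaining $dm\,da$ integral of $|p(m,a;s)|$ is precisely $\|p(\cdot,\cdot;s)\|_{L^1}$. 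Taking the supremum in $s\in[0,t]$ of the left-hand side and bounding the right-hand side by the sup of $\|p(\cdot,\cdot;u)\|_{L^1}$ on $[0,s]$ delivers \eqref{eq:maj-i-racine} with $C=\|B\|_\infty C_0$. The estimate \eqref{majII} is then immediate by summing the $2(d+1)$ inequalities in the definition \eqref{defcalI} of $\mathcal{I}$.

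The only real obstacle is the $L^1$ bound on $\partial_k p_{W^{*1},W}$; once that standard Gaussian estimate is in hand, everything else is just Tonelli and a linear change of variables. In particular, the singularity $1/\sqrt{t-s}$ is integrable on $[0,t]$, which is exactly why the kernel $\mathcal{I}$ will later be iteratively composable in Proposition \ref{pro-dvpt-serie-L1}.
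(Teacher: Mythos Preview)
Your proof is correct and follows essentially the same route as the paper: Tonelli, then the Gaussian-type bound on $\partial_k p_{W^{*1},W}$, then a linear change of variables to decouple the kernel from $p$, yielding the $1/\sqrt{t-s}$ factor. The only cosmetic difference is that the paper invokes the pointwise estimate \eqref{maj-derk-p0} (dominating $|\partial_k p_{W^{*1},W}|$ by $D/\sqrt{t}$ times a Gaussian $\phi_{d+1}$) and then integrates, whereas you state the resulting $L^1$-bound directly; one minor slip is your description of $p_{W^{*1},W}$ as ``a linear combination of two Gaussian densities'' --- in fact \eqref{dens-cas-mb} is (up to an indicator) a first derivative of a Gaussian, cf.\ \eqref{pw-densgder} --- but this does not affect the argument since the needed $L^1$ estimate on its derivatives is exactly what Lemma~\ref{lem-appendice-1} provides.
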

\proof 
 Let $T>0,$  $p \in L^{\infty}([0, T] \times L^1({\mathbb R}^{d+1},dmdx))$ and $t \in [0,T]$
    and let $\phi_{d+1}$ be  the Gaussian law density restrained to the subset $\{b>a^1_+\}$
    (up to a constant)
    \begin{align}
  \label{def-phi}
  \phi_{d+1}(b,b-a^1,\tilde a;2t) :=\frac{1}{ {\sqrt{2 \pi t}^{d+1}}}{\mathbf 1}_{b>a^1_+}e^{- \frac{ b^2 + (b-a^1)^2 + \|\tilde{a}\|^2}{4t}}.
  \end{align}
(i) Let $j=\alpha$ and $k=m,1,\cdots,d,$
 according to the definition of ${\mathcal I}^{k,\alpha}$ and the boundedness of $B$,
  \begin{align*}
  \left| {\mathcal I}^{k,\alpha}[p](m,x;t)\right| \leq \|B\|_{\infty} 
 \int_0^t \int_{{\mathbb R}^{d+1}}{\mathbf 1}_{b <m}| \partial_{k}p_{W^{*1},W}\left(m-a^1,x-a;t-s\right)
 p(b,a;s)|dbdads.
  \end{align*}
%
  \\
 {Using Lemma \ref{lem-appendice-1} there 
  exists a constant $D$ such that for  $k=m,1,\cdots,d$:
  \begin{align}
  \label{maj-derk-p0}
  |\partial_{k}p_{W^{*1},W}\left(b,a;t\right)| \leq \frac{D}{\sqrt t}\phi_{d+1}(b,b-a^1,\tilde a;2t).
  \end{align}
 So
    \begin{align*}
  \left| {\mathcal I}^{k,\alpha}[p](m,x;t)\right| \leq \|B\|_{\infty} 
 \int_0^t \int_{{\mathbb R}^{d+1}} \frac{D}{\sqrt{t-s}}\phi_{d+1}(m-a^1,m-x^1,\tilde x-\tilde a;t-s)|p(b,a;s)|dbdads.
  \end{align*}
We operate an integration with respect to $(m,x)$ using Tonelli's theorem and omitting the
 { indicator functions}. Since $\phi_{d+1}$ is the density of a Gaussian law, we get the following bound, 
    \begin{align*}
  \left\| {\mathcal I}^{k,\alpha}[p](.,.;t)\right\|_{L^1({\mathbb R}^{d+1},dmdx)} \leq & D\|B\|_{\infty}
 \int_0^t \int_{{\mathbb R}^{d+1}}\frac{1}{\sqrt{t-s}}| p(b,a;s)|{db}dads
 \\
 \leq & 2^{(d+1)/2}D\|B\|_{\infty}
 \int_0^t \frac{1}{\sqrt{t-s}}\sup_{u\leq s}\| p(.,.;u)\|_{L^1(\mathbb{R}^{d+1},dbda)}ds,
  \end{align*}
meaning inequality \eqref{eq:maj-i-racine} when $j=\alpha.$
\\
\\
(ii) 
Let $j=\beta$ and $k=m,1,\cdots,d.$
According to the definition of ${\mathcal I}^{k,\beta}$ and the boundedness of $B$,
  \begin{align*}
  \left| {\mathcal I}^{k,\beta}[p](m,x;t)\right| \leq \|B\|_{\infty} 
 \int_0^t \int_{{\mathbb R}^{d+1}}{\mathbf 1}_{b <m}| \partial_{k}p_{W^{*1},W}\left(b-a^1,x-a;t-s\right)p(m,a;s)|dbdads.
  \end{align*}
Using \eqref{maj-derk-p0} yields:
$$
  \left| {\mathcal I}^{k,\beta}[p](m,x;t)\right| \leq \|B\|_{\infty} 
 \int_0^t \int_{{\mathbb R}^{d+1}}\frac{D}{\sqrt{t-s}}\phi_{d+1}(b-a^1,b-x^1,\tilde x-\tilde a;2(t-s))|p(m,a;s)|dbdads.
$$
We operate an integration with respect to $x$ then to $b$ using Tonelli's theorem and omitting the
 { indicator functions} and using that
 $\phi$ is the density of a Gaussian law. So the bound with respect to a multiplicative constant:
    \begin{align*}
  \left\| {\mathcal I}^{k,\beta}[p](.,.;t)\right\|_{L^1({\mathbb R}^{d+1},dmdx)} \leq & D \|B\|_{\infty}2^{(d+1)/2}
 \int_0^t \int_{{\mathbb R}^{d+1}}\frac{1}{\sqrt{t-s}}| p(m,a;s)|dmdads\\
 \leq &  D \|B\|_{\infty}2^{(d+1)/2}
 \int_0^t \frac{1}{\sqrt{t-s}}\sup_{u\leq s}\| p(.,.;u)\|_{L^1(\mathbb{R}^{d+1},dmda)}ds,
  \end{align*}
meaning inequality  \eqref{eq:maj-i-racine} for $j=\beta.$

{ Finally, estimation \eqref{majII} is obtained by adding estimations 
 \eqref{eq:maj-i-racine} for $j=\alpha, \beta$ and $k=m, 1,...,d.$}
\hfill$\bullet$
\\


%
%
The following lemma is a consequence of \eqref{majII} in Lemma \ref{lem-cont-al}:
\begin{lem}
\label{lem-maj-unif-p-nl1}
For all $n $
\begin{align}
 \label{maj-norme-pn-l1}
\sup_{u \leq t} \|p_n(.,.;u)\|_{L^1(\mathbb{R}^{d+1},dmdx)} \leq (2(d+1)C)^{n} t^{n/2} \frac{\Gamma(1/2)^n}{\Gamma( 1+n/2)},
\end{align}
\begin{align} 
\label{maj-norme-gamma-l1}
\sup_{u \leq t }\|(p_V -P_{n})(.,.;u)\|_{L^1(\mathbb{R}^{d+1},dmdx)} \leq (2(d+1)C)^{n+1} t^{(n+1)/2} \frac{\Gamma(1/2)^{n+1}}{{\Gamma( (n+3)/2)}}.
\end{align}
\end{lem}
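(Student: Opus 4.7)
The plan is to establish both estimates by induction, using the continuity estimate \eqref{majII} from Lemma \ref{lem-cont-al} together with a Beta-function computation. The key analytic ingredient is the identity
\begin{equation*}
\int_0^t (t-s)^{-1/2} s^{k/2}\, ds \;=\; t^{(k+1)/2}\, B\!\left(\tfrac{1}{2},\tfrac{k}{2}+1\right) \;=\; t^{(k+1)/2}\, \frac{\Gamma(1/2)\,\Gamma(k/2+1)}{\Gamma((k+3)/2)},
\end{equation*}
which shows that if $f_k(s) := s^{k/2}\,\Gamma(1/2)^k/\Gamma(k/2+1)$, then $\int_0^t (t-s)^{-1/2} f_k(s)\,ds = f_{k+1}(t)$. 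This is the telescoping that produces the exact constants stated in the lemma.

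First, I would prove \eqref{maj-norme-pn-l1} by induction on $n$. The base case $n=0$ requires $\sup_{u\leq t}\|p_0(.,.;u)\|_{L^1(\mathbb{R}^{d+1})}\leq 1$, which holds because $p_0(.,.;u)$ is exactly the probability density of $(X_0^1+W^{*1}_u, X_0+W_u)$ (when $X_0$ has law $\mu_0$ and is independent of $W$), hence integrates to $1$ on $\mathbb{R}^{d+1}$. For the inductive step, one applies the definition $p_{n+1}=\mathcal{I}(p_n)$ from \eqref{def-suite}--\eqref{defcalI} combined with estimate \eqref{majII}, obtaining
\begin{equation*}
\sup_{u\leq t}\|p_{n+1}(.,.;u)\|_{L^1} \;\leq\; 2(d+1)C \int_0^t \frac{1}{\sqrt{t-s}}\, (2(d+1)C)^n s^{n/2}\,\frac{\Gamma(1/2)^n}{\Gamma(n/2+1)}\,ds,
\end{equation*}
and the Beta identity collapses the right-hand side to $(2(d+1)C)^{n+1} t^{(n+1)/2} \Gamma(1/2)^{n+1}/\Gamma((n+3)/2)$.

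For \eqref{maj-norme-gamma-l1}, the idea is to iterate the fixed-point equation $p_V = p_0 + \mathcal{I}(p_V)$ furnished by Proposition \ref{rem-dens-mal}. By induction one gets $p_V = P_n + \mathcal{I}^{n+1}(p_V)$, hence $p_V - P_n = \mathcal{I}^{n+1}(p_V)$. Applying \eqref{majII} exactly $n+1$ times, starting from $\sup_{u\leq t}\|p_V(.,.;u)\|_{L^1}=1$ (since $p_V$ is a probability density on $\bar\Delta$), and iterating the Beta identity $n+1$ times produces the claimed bound. Consequently Proposition \ref{pro-dvpt-serie-L1} follows, because $\Gamma((n+3)/2)$ grows faster than any geometric factor, so $P_n \to p_V$ in $L^\infty([0,T],L^1(\mathbb{R}^{d+1}))$.

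The only mildly delicate point is justifying the iteration $p_V = P_n + \mathcal{I}^{n+1}(p_V)$ and controlling $\mathcal{I}^{n+1}$ applied to $p_V$; this is straightforward once one notes that $p_V\in L^\infty([0,T],L^1(\mathbb{R}^{d+1}))$ with unit norm, so the hypotheses of Lemma \ref{lem-cont-al} apply at each step. The rest is a routine verification that the telescoping of Beta integrals yields precisely the constant $\Gamma(1/2)^{n+1}/\Gamma((n+3)/2)$ announced in the statement.
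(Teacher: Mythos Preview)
Your proposal is correct and follows essentially the same approach as the paper: induction on $n$ for \eqref{maj-norme-pn-l1} using that $p_0$ is a probability density and $p_{n+1}=\mathcal{I}(p_n)$ combined with \eqref{majII} and the Beta integral, and likewise for \eqref{maj-norme-gamma-l1}. The only cosmetic difference is that for the second estimate the paper runs the induction directly via the recursion $p_V-P_{n+1}=\mathcal{I}(p_V-P_n)$ (applying \eqref{majII} once per step), whereas you first unfold this to $p_V-P_n=\mathcal{I}^{n+1}(p_V)$ and then iterate \eqref{majII} $n{+}1$ times; the two formulations are equivalent.
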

\proof
(i) For all $t>0,$ $p_0(.;t)$ is a density of probability, so   \eqref{maj-norme-pn-l1} is satisfied for $n=0.$ We now assume that  \eqref{maj-norme-pn-l1} 
is satisfied for $n.$ Using $p_{n+1}=\II [p_n]$,
  \eqref{majII} and the induction e assumption:
 \begin{align*}
 \sup_{u \leq t} \|p_{n+1}(.,.;u)\|_{L^1(\mathbb{R}^{d+1},dmdx)} \leq (2(d+1) C)^{n+1}  \frac{\Gamma(1/2)^n}{\Gamma( 1+n/2)}\int_0^t \frac{\sqrt{s^n}}{\sqrt{t-s}}ds.
 \end{align*}
We operate the change of variable $s=tu$ and use $\int_0^1 u^{a-1}(1-u)^{b-1}du= \frac{\Gamma(a)\Gamma(b)}{\Gamma(a+b)}$:
  \begin{align*}
 \sup_{u \leq t }\|p_{n+1}(.,.;u)\|_{L^1(\mathbb{R}^{d+1},dmdx)} \leq (2(d+1) C)^{n+1} t^{(n+1)/2} \frac{\Gamma(1/2)^n}{\Gamma( 1+n/2)} \frac{\Gamma(1/2) \Gamma( 1+ n/2)}{\Gamma((n+3)/2)}
 \end{align*}
which proves \eqref{maj-norme-pn-l1} for all  $n.$


(ii) Noting that
$P_0=p_0$ and
$p_V-p_0= \II[p_V]$ and
{  applying  \eqref{majII} to  $p_V$ yield
 \begin{align*}
 \sup_{u \leq t }\|(p_V -P_{0})(.,.;u)\|_{L^1(\mathbb{R}^{d+1},dmdx)} \leq 2(d+1)C t^{1/2}.
 \end{align*}
But $\Gamma(1/2) / \Gamma(3/2)=2$ so \eqref{maj-norme-gamma-l1}  is satisfied for $n=0.$
}

We now suppose that \eqref{maj-norme-gamma-l1} is satisfied for $n.$  
Using $p_V-P_{n+1}=$\\$
p_0+\II(p_V)- (p_0+\II(P_n))=\II(p_V-P_n),
$ 
the bound \eqref{majII} and the induction assumption:
 \begin{align*}
 \sup_{u \leq t }\|[p_{V}- P_{n+1}](.,.;u)\|_{L^1(\mathbb{R}^{d+1},dmdx)} \leq 2(d+1)C
 \int_0^t 
 (2(d+1) C)^{n+1}  \frac{\Gamma(1/2)^{n+1}}{\Gamma( (3+n)/2)} \frac{\sqrt{s^{n+1}}}{\sqrt{t-s}}ds.
 \end{align*}
We now operate the change of  variable $s=tu$ and  $\int_0^1 u^{a-1}(1-u)^{b-1}du= \frac{\Gamma(a)\Gamma(b)}{\Gamma(a+b)}$ { with $a=(n+3)/2,b=\demi$:
  \begin{align*}
 \sup_{u \leq t }\|[p_V-P_{n+1}](.,.;u)\|_{L^1(\mathbb{R}^{d+1},dmdx)} \leq (2(d+1) C)^{n+2} t^{(n+2)/2} \frac{\Gamma(1/2)^{n+2}}{{\Gamma( (4+n)/2)}} 
 \end{align*}
which proves  \eqref{maj-norme-gamma-l1} for $n+1$ and thus for all $n.$
\hfill$\Box$

The series  $\sum_n \frac{x^n }{\Gamma(n/2 +1)}$ is convergent for any $x$, so {
 Proposition \ref{pro-dvpt-serie-L1} is 
 a consequence of lemmas 
 \ref {lem-cont-al} and \ref{lem-maj-unif-p-nl1}}.}
 %

 \subsubsection{Upper Bound of $p_V$}
 
 meaning  Hypothesis 2.1 (i).
 \\
For all $T>0,$ $x_0 \in \mathbb{R}^d,$ $p \in L^{\infty}([0,T], L^{1}(\mathbb{R}^{d+1},dmdx))$ the support of which being included in
$\{(m,x),~~m>x_0^1, m>x^1\}$ let us denote
  \begin{align} 
  \label{maj-p-gaus}
N(p;t,x_0) :=\sup_{(m,x)\in \mathbb{R}^{d+1},~~m>x^1, m>x_0^1}\frac{ |p(m,x;t)|}{\phi_{d+1}(m-x_0^1, m-x^1,\tilde{x}-\tilde{x}_0;2t)}.
 \end{align}

 \begin{pro}
 \label{pro-maj-pv-g} 
For all $T>0$ there exists a constant $C_T$ 
 and for all $n$ there exists constants 
 $C_n= \frac{\left[\|B\|_\infty D{(2(d+1)) 2^{d/2} }{\Gamma(1/2)}\right]^n }{\Gamma(1+n/2)} $
 such that: for all  $x_0 \in \mathbb{R}^d$,  ${0<t\leq T},$
 \begin{align*}
& (i)&\left| p_n(m,x;t,x_0) \right| \leq C_nt^{n/2}\phi_{d+1}( m-x^1_0,m-x^1,\tilde{x}-\tilde{x}_0, 2t){\mathbf 1}_{m>\max(x^1,x^1_0)}
 \end{align*}

(ii)
{
$\left|p_V(m,x;t,x_0) \right| \leq C_T\phi_{d+1}( m-x^1_0,m-x^1,\tilde{x}-\tilde{x}_0, 2t){\mathbf 1}_{m>\max(x^1,x^1_0)}$}

{ (iii) For all $\mu_0$ initial probability measure on ${\mathbb R}^d$, 
\\
$\sup_{u >0} p_V(m,m-u,\tilde{x},t; \mu_0) \in L^1([0,T] \times \mathbb{R}^d, dtdmd\tilde{x}).$}
 \end{pro}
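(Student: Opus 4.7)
The plan is to prove (i) by induction on $n$, deduce (ii) by summing the series provided by Proposition \ref{pro-dvpt-serie-L1}, and derive (iii) from the pointwise bound in (ii) by taking a supremum in $u$ and performing a Gaussian integration.

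For the base case of (i), $p_0(m, x; t, x_0) = p_{W^{*1}, W}(m - x_0^1, x - x_0; t)$ is the Brownian running-maximum density, known explicitly from the reflection principle; a direct computation (in the spirit of Lemma \ref{lem-appendice-1} but for the density rather than its derivative) yields $p_0 \leq \phi_{d+1}(m - x_0^1, m - x^1, \tilde{x} - \tilde{x}_0; 2t)\,\mathbf{1}_{m > \max(x^1, x_0^1)}$, so $C_0 = 1$. For the inductive step, write $p_{n+1} = \II(p_n) = -\sum_{k, j} \II^{k, j}(p_n)$ and estimate each of the $2(d+1)$ terms using boundedness of $B$, the derivative bound $|\partial_k p_{W^{*1}, W}(b, a; \tau)| \leq D/\sqrt{\tau}\cdot \phi_{d+1}(b, b - a^1, \tilde{a}; 2\tau)$ supplied by Lemma \ref{lem-appendice-1}, and the induction hypothesis applied to $p_n(b, a; s, x_0)$. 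Inserting these bounds, the spatial integral over $(b, a)$ should be controlled by a Chapman-Kolmogorov-type inequality of the form
\[
\int_{\mathbb{R}^{d+1}} \phi_{d+1}(m - a^1, m - x^1, \tilde{x} - \tilde{a}; 2(t-s))\, \phi_{d+1}(b - x_0^1, b - a^1, \tilde{a} - \tilde{x}_0; 2s)\, \mathbf{1}_{\{b < m\}} \, db\, da \leq 2^{d/2}\, \phi_{d+1}(m - x_0^1, m - x^1, \tilde{x} - \tilde{x}_0; 2t),
\]
obtained by writing each $\phi_{d+1}$ as a restricted product of three scalar Gaussian densities of variance $2\sigma$ and iterating the semigroup identity $\int g_\sigma(y - z)\, g_\tau(z - w)\, dz = g_{\sigma + \tau}(y - w)$, while tracking the half-space indicators. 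The remaining time integral $\int_0^t s^{n/2}(t-s)^{-1/2}\, ds = t^{(n+1)/2}\, \Gamma(1/2)\Gamma(1 + n/2)/\Gamma(1 + (n+1)/2)$ is a Beta integral; summing over the $2(d+1)$ operators $\II^{k,j}$ produces the recursion $C_{n+1} = C_n\, K\, \Gamma(1 + n/2)/\Gamma(1 + (n+1)/2)$ with $K = 2(d+1)\|B\|_\infty D\, 2^{d/2}\, \Gamma(1/2)$. Unrolling telescopes to $C_n = K^n/\Gamma(1 + n/2)$, closing the induction.

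Part (ii) then follows directly: Proposition \ref{pro-dvpt-serie-L1} identifies $p_V$ with $\sum_{n \geq 0} p_n$ in $L^\infty([0, T], L^1)$, while the pointwise estimate of (i) gives
\[
|p_V(m, x; t, x_0)| \leq \phi_{d+1}(m - x_0^1, m - x^1, \tilde{x} - \tilde{x}_0; 2t) \sum_{n \geq 0} \frac{(K \sqrt{t})^n}{\Gamma(1 + n/2)}.
\]
This Mittag-Leffler-type series is an entire function of $\sqrt{t}$; setting $C_T$ equal to its value at $T$ yields the desired uniform bound on $[0, T]$. For part (iii), specialize (ii) to $x = (m - u, \tilde{x})$, obtaining $p_V(m, m - u, \tilde{x}; t, x_0) \leq C_T \phi_{d+1}(m - x_0^1, u, \tilde{x} - \tilde{x}_0; 2t)$. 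The exponential factor in $u$ is maximized as $u \to 0^+$, so
\[
\sup_{u > 0} p_V(m, m - u, \tilde{x}; t, x_0) \leq \frac{C_T\, \mathbf{1}_{m > x_0^1}}{(2\pi t)^{(d+1)/2}} \exp\!\left(-\frac{(m - x_0^1)^2 + \|\tilde{x} - \tilde{x}_0\|^2}{4t}\right).
\]
Integrating in $m$ and $\tilde{x}$ produces a $d$-dimensional Gaussian integral equal to $(4\pi t)^{d/2}$, leaving an $O(t^{-1/2})$ integrand in $t$, integrable on $[0, T]$. For general $\mu_0$, use $p_V(\cdot; t, \mu_0) = \int p_V(\cdot; t, x_0)\, \mu_0(dx_0)$, take $\sup_u$ under the integral, and apply Tonelli.

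The principal obstacle lies in the Chapman-Kolmogorov inequality in the inductive step: although the $\tilde{a}$-integration is a clean Gaussian semigroup convolution, the one-dimensional variables $(a^1, b)$ enter in a non-product fashion (appearing in $m - a^1, b - a^1, b - x_0^1$ simultaneously), and the half-space indicators carried by $\phi_{d+1}$ must be respected throughout so that the resulting bound carries the correct indicator $\mathbf{1}_{m > \max(x^1, x_0^1)}$. The precise constant $2^{d/2}$ will arise from the $2^{(d+1)/2}$ normalization relating $\phi_{d+1}$ to a product of $d + 1$ independent $\mathcal{N}(0, 2t)$ densities, adjusted by the restriction to the set $\{b < m\}$.
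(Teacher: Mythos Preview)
Your outline matches the paper's: induction on $n$ for (i), summing the series for (ii), and maximising over $u$ for (iii). Parts (ii) and (iii) are correct as stated, and the base case of (i) is exactly Lemma \ref{lem-appendice-1} (ii). The gap you flag at the end is real and is precisely what the paper isolates as a separate lemma (Lemma \ref{maj-ikj-x1m}). Your displayed Chapman--Kolmogorov inequality cannot be obtained just by ``iterating the semigroup identity'': the indicators built into $\phi_{d+1}$ force the $a^1$- and $b$-integrals onto half-lines, and $a^1$ appears simultaneously in $(m-a^1)$ and $(b-a^1)$, so the one-dimensional integrals are not convolutions in a single pair of variables. The paper's device, drawn from Lemma \ref{lem-convol-gaussien} (i') and (iii), is this: after the clean Gaussian convolution in $\tilde a$, the half-line integral in $a^1$ up to $b$ evaluates to a Gaussian in $m-b$ times a factor $\Phi_G\bigl(\sqrt{\tfrac{s}{2t(t-s)}}(b-m)\bigr)$; since $b<m$ the tail bound $\Phi_G(-u)\le \tfrac12 e^{-u^2/2}$ converts this factor back into a pure Gaussian with \emph{rescaled} variance, which then combines correctly with the remaining exponentials in the $b$-integration and reproduces $\phi_{d+1}(m-x_0^1,m-x^1,\tilde x-\tilde x_0;2t)$. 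Without this $\Phi_G$ trick the half-line integrals do not close, and you have not supplied an alternative.

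Note also that your displayed inequality only covers the $\II^{k,\alpha}$ shape. In $\II^{k,\beta}$ the inner density is $p_n(m,a;s)$ rather than $p_n(b,a;s)$ and the kernel carries $(b-a^1,x-a)$ rather than $(m-a^1,x-a)$, so the roles of $m$ and $b$ in the two Gaussian factors are swapped; the paper handles this case by a separate but parallel computation, again using the $\Phi_G$ tail bound after the $a^1$-integration.
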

  { Remark that, actually,  this point (iii) is Hypothesis \ref{hyp-dens-outil} (i). }

 \proof
Point $(ii)$ is a consequence 
 of point (i), since $p_V=\sum_{n=0}^{\infty}p_n,$  and the series 
 $\sum_n \frac{1}{{\Gamma (1+n/2)}}x^n$ admits an  infinite  radius of convergence (Proposition \ref{pro-dvpt-serie-L1}). 
 \\
We prove point $(i)$ by induction on $n$   using point (ii) in  Lemma \ref{lem-appendice-1}: $p_0(m,x;t,x_0) \leq $
\begin{align*}
\frac{e^{-\frac{(m-x^1)^2}{4t}-\frac{\|\tilde{x}-\tilde{x}_0\|^2}{4t} {- \frac{(m-x_0^1)^2}{4t} }}}{\sqrt{(2 \pi )^{d+1}{t^{d+1}}}}{\mathbf 1}_{m>\max(x^1,x^1_0)}=
 \phi_{d+1}(m-x^1,m-x_0^1,\tilde x-\tilde x_0;2t){\mathbf 1}_{m>\max(x^1,x^1_0)},
\end{align*}
so $N(p_0;t,x_0) \leq 1,$ which is (i) for $n=0,$ {$C_0=1$.}

We assume point $(i)$ is true for $p_n,$ meaning $N(p_n;t,x_0) \leq  C_nt^{n/2}.$
By definition $p_{n+1}=  {\mathcal I}[p_n],$   Lemma \ref{maj-ikj-x1m} proved below yields:
\begin{align*}
N(p_{n+1};t,x_0)=N(\II[p_n];t,x_0)&\leq  2(d+1)2^{d/2}\|B\|_{\infty}DC_n \int_0^t \frac{s^{n/2}}{\sqrt{2\pi (t-s)} }ds.
\end{align*}
We operate the change of variable $s=tu$ 
\begin{align*}
N(p_{n+1};t,x_0)\leq \frac{2(d+1)2^{d/2}\|B\|_{\infty}D}{\sqrt{2\pi}}C_n {(\sqrt{t})^{n+1}}\int_0^1 \frac{u^{n/2}}{\sqrt{1-u}} ds
\end{align*}
Using  $\int_0^1 \frac{u^{n/2}}{\sqrt{ 1-u}} du= \frac{\Gamma((n+2)/2 )\Gamma(1/2)}{\Gamma((n+3)/2)}$ and $C_n$ definition:
\begin{align*}
N(p_{n+1};t,x_0)\leq C_{n+1} (\sqrt{t})^{n+1},
\end{align*}
this achieves the proof of point (i) in Proposition \ref{pro-maj-pv-g}.
\\
{ (iii) Then for all $x_0 \in \mathbb{R}^d$ and using $x^1=m-u,$
$$\sup_{u>0}p_V(m,m-u,\tilde x;t)\leq
 C_T\phi_{d+1}(m-x^1_0,0,\tilde{x}-\tilde{x}_0;2t)
 \in L^1([0,T]\times \mathbb{R}^d,dtdmd\tilde x).$$
Since $p_V(m,x;t,\mu_0)=\int_{\mathbb{R}^d} p_V(m,x;t,x_0) \mu_0(dx_0)$ 
 point (iii) is true.}
\hfill$\Box$

 \begin{lem}
 \label{maj-ikj-x1m}
Let $T>0, $ $x_0 \in \mathbb{R}^d,$ $p \in L^{\infty}([0,T], dt, L^{1}(\mathbb{R}^{d+1},dmdx))$ such that the support of $p(.,.;t)$ is included in $\{(m,x),~~m>x_0^1, m>x^1\}$ and for all $s\in ]0,T]$ $N(p;s,x_0) <\infty. $ Then for $j=\alpha,$ $k=m,1,\ldots,d,$ the support of function ${\mathcal I}^{j,k}[p](.;t)$ is included in $\{(m,x),~~m>x_0^1, m>x^1\}.$ Moreover for all $t\in [0,T]$ we have :
 \begin{align*}
N({\mathcal I}[p];t,x_0)\leq 2(d+1)2^{d/2}\|B\|_{\infty}D \int_0^t \frac{1}{\sqrt{2\pi { (t-s)}}}N(p;s,x_0)ds,~~\forall t \in[0,T].
\end{align*}
\end{lem}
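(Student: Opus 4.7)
The plan is to establish the two assertions of the lemma separately: the support property of $\II^{k,j}[p]$ first, and then the pointwise $N$-bound, handling each operator $\II^{k,j}$ for $k\in\{m,1,\ldots,d\}$, $j\in\{\alpha,\beta\}$, and summing at the end via $N(\II[p])\le\sum_{k,j}N(\II^{k,j}[p])$.

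For the support property, fix $(m,x)$ with $m\le x^1$ or $m\le x_0^1$ and check that the integrand of $\II^{k,j}[p](m,x;t)$ vanishes identically. In the case $j=\alpha$, the factor $\partial_k p_{W^{*1},W}(m-a^1,x-a;t-s)$ is supported in $\{m>x^1,\,m>a^1\}$ (inherited from the support of $p_{W^{*1},W}$ described in Appendix), the hypothesis on $p$ forces $b>x_0^1$ and $b>a^1$, and the factor ${\mathbf 1}_{b<m}$ forces $m>b$; combining these yields $m>x^1$ and $m>b>x_0^1$. The case $j=\beta$ is symmetric: the factor $p(m,a;s)$ directly requires $m>x_0^1$ (and $m>a^1$), while the derivative factor together with ${\mathbf 1}_{b<m}$ enforces $m>b>x^1$.

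For the norm bound, insert the Gaussian domination \eqref{maj-derk-p0} of $|\partial_k p_{W^{*1},W}|$ and the defining inequality $|p(b,a;s)|\le N(p;s,x_0)\,\phi_{d+1}(b-x_0^1,b-a^1,\tilde a-\tilde x_0;2s)$ into the integrand of $\II^{k,j}[p]$. This produces
$$
|\II^{k,j}[p](m,x;t)|\le \|B\|_\infty D\int_0^t \frac{N(p;s,x_0)}{\sqrt{t-s}}\,\mathcal J^{j}(m,x;s,t)\,ds,
$$
where $\mathcal J^{j}$ is the two-fold Gaussian integral in $(a^1,\tilde a,b)$ on $\{b<m\}$ of the product $\phi_{d+1}(\cdot;2(t-s))\phi_{d+1}(\cdot;2s)$. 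Dividing by $\phi_{d+1}(m-x_0^1,m-x^1,\tilde x-\tilde x_0;2t)$, taking the supremum in $(m,x)$ and summing over the $2(d+1)$ pairs $(k,j)$ reduces the lemma to the pointwise estimate
$$
\mathcal J^{j}(m,x;s,t)\le \frac{2^{d/2}}{\sqrt{2\pi}}\,\phi_{d+1}(m-x_0^1,m-x^1,\tilde x-\tilde x_0;2t).
$$

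To prove this pointwise estimate, factor each $\phi_{d+1}(\cdot;2r)$ as a product of one-dimensional Gaussian kernels $\psi_1\psi_1\psi_{d-1}$ (times the support indicators) and integrate successively. The $\tilde a$-integration is a standard Gaussian convolution at times $2s$ and $2(t-s)$, producing the factor $2^{(d-1)/2}\psi_{d-1}(\tilde x-\tilde x_0;2t)$. The $a^1$-integration restricted to $(-\infty,b)$ is controlled via the Gaussian cumulative distribution function $\Phi_G\le 1/2$ (since $b<m$ makes the relevant argument negative) and yields at most $\tfrac{1}{\sqrt 2}\psi_1(m-b;2t)$. The main technical obstacle is the last step: the remaining $b$-integration on $(x_0^1,m)$ combines kernels at mismatched time parameters $2s$ and $2t$ with the external factor $\psi_1(m-x^1;2(t-s))$, so no direct convolution identity closes the calculation. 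The plan is to complete the square jointly in $(b,a^1)$ and exploit the truncations $b<m$, $b>x_0^1$ together with the positivity of the quadratic exponents to regroup them into the target $-[(m-x_0^1)^2+(m-x^1)^2]/(4t)$ of $\phi_{d+1}(\cdot;2t)$, the remaining Gaussian in $b$ being absorbed by a $\Phi_G$-type factor and the $\sqrt{2\pi}$ normalisation. The analogous argument for $\mathcal J^{\beta}$ follows \emph{mutatis mutandis}, the roles of $(m,x^1)$ and $(b,a^1)$ being interchanged in the first $\phi_{d+1}$.
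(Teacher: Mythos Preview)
Your overall structure---support first, then the Gaussian-domination bound integrated successively in $\tilde a$, then $a^1$, then $b$---matches the paper exactly. The gap is precisely where you flag it yourself: the ``mismatched time parameters'' after the $a^1$-integration. Your proposed resolution (complete the square jointly in $(b,a^1)$, exploit truncations) is too vague and, as written, does not close the estimate.

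The missing idea is that you have thrown away too much at the $a^1$-step. You bound $\Phi_G\le \tfrac12$, but the paper uses the sharper Gaussian tail bound $\Phi_G(-u)\le \tfrac12 e^{-u^2/2}$ for $u>0$ (this is Lemma~\ref{lem-convol-gaussien}(iii)). Concretely, in the $\alpha$ case the $a^1$-integral up to $b$ equals
\[
\sqrt{2}\,\frac{e^{-\frac{(m-b)^2}{4t}}}{\sqrt{2\pi t}}\,\Phi_G\!\Bigl(\sqrt{\tfrac{s}{2t(t-s)}}(b-m)\Bigr),
\]
and since $b<m$ the tail bound gives the extra factor $e^{-\frac{s(m-b)^2}{4t(t-s)}}$. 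Combining exponents via $\tfrac{1}{t}+\tfrac{s}{t(t-s)}=\tfrac{1}{t-s}$ converts the kernel from time $2t$ back to time $2(t-s)$. Now the $b$-integral \emph{is} a clean Gaussian convolution of kernels at times $2(t-s)$ and $2s$, yielding $\psi_1(m-x_0^1;2t)$. The remaining factor $e^{-\frac{(m-x^1)^2}{4(t-s)}}$ is handled by the trivial monotonicity $e^{-\frac{(m-x^1)^2}{4(t-s)}}\le e^{-\frac{(m-x^1)^2}{4t}}$, and one reads off $\phi_{d+1}(m-x_0^1,m-x^1,\tilde x-\tilde x_0;2t)$ as desired. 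The $\beta$ case uses exactly the same two tricks (sharper $\Phi_G$ bound, then time monotonicity on the leftover exponent $e^{-(m-x_0^1)^2/(4s)}$).
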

\proof Let $T>0,$ $x_0\in\mathbb{R}^d,$ $p \in L^{\infty}([0,T], dt,L^{1}(\mathbb{R}^{d+1},dmdx))$
such that for all $t>0$ the support of $p(.;t)$ is included in $\{(m,x),~~m>x_0^1, m>x^1\}.$

(i) For $j=\alpha,$ $k=m, 1,\cdots,d,$ 
using the definition of ${\mathcal I}^{\alpha,k}$  
 yields:
 \begin{align*}
 { {\mathcal I}^{k,\alpha}}[p](m,x;t):=\int_0^t{\int_{\mathbb{R}^{d+1}} B^{k}(a)\partial_{k}p_{W^{*1},W}\left(m-a^1,x-a;t-s\right){\mathbf 1}_{x_0^1<b <m,m>x^1 }p(b,a;s)dbda}ds
 \end{align*}

So the support of ${\mathcal I}^{\alpha,k}[p](.;t)$ is included in $\{(m,x) \in \mathbb{R}^{d+1},~~m>\max(x^1_0,x^1)\}.$  For now on, we only consider $(m,x)$ such that $m >\max(x^1,x^1_0).$
 
{ Let $p$  be a function such that} 
 $\forall s \in ]0,T]$ $N(p;x_0,s)<\infty.$
The definition of
 ${\mathcal I}^{k,\alpha},$ the boundedness of $B,$ the fact that $\partial_kp_{W^*,W}$
satisfies  \eqref{maj-derk-p0} and the definition \eqref{maj-p-gaus} of $N(p;t,x_0)$  imply
 \begin{align*}
 \left| {\mathcal I}^{k,\alpha}[p](m,x;t) \right| \leq \|B\|_{\infty}\int_0^t\int_{{\mathbb R}^{d+1}}N(p;s,x_0)
 \frac{D}{ \sqrt{ (t-s)}}
 {\mathbf 1}_{m>x_1}{\mathbf 1}_{m>b>\max(a^1,x_0^1)}
 \\
 ~~~~~~ \phi_{d+1}(m-a^1, m-x^1,\tilde{x}-\tilde{a};t-s)
 \phi_{d+1}(b-x_0^1,b-a^1,\tilde{a}-\tilde{x_0};s) db da ds.
 \end{align*}

We integrate in $\tilde{a}$ using Lemme \ref{lem-convol-gaussien} (ii) with $u=\tilde{x},$ $v=\tilde{a},$ $w=\tilde{x}_0$ and the fact that $\phi_{d+1}$ is a Gaussian density of probability:
  \begin{align}
  \label{I-alpha-int-1}
& \left| {\mathcal I}^{\alpha,k}[p](m,x;t) \right| \leq  2^{(d-1)/2}\|B\|_{\infty}D
 \\
& \int_0^t\int_{{\mathbb R}^{2}}N(p;s,x_0)
 {\mathbf 1}_{m>b>\max(a^1,x_0^1)} \frac{e^{-\frac{\|\tilde{x}-\tilde{x_0}\|^2}{4t}}}{\sqrt{(2 \pi t)^{d-1}}}
 \frac{e^{-\frac{(m-a^1)^2}{4(t-s)}-\frac{(m-x^1)^2}{4(t-s)} }}{\sqrt{ (2\pi)^2(t-s)^3}}
 \frac{e^{-\frac{(b-x_0^1)^2}{ 4s}-\frac{(b-a^1)^2}{4s}}}{\sqrt{(2\pi)^2 {s^2}}}db da^1 ds.
 \nonumber
 \end{align}
Using point (i')  Lemma \ref{lem-appendice-1}
 with $u=m,$ $v=a^1,$ $w=b,$ $k=1$, we integrate in $da^1$ up to $b$:
 \begin{align*}
&\int_{-\infty}^b 
 \frac{e^{-\frac{(m-a^1)^2}{4(t-s)} }}{\sqrt{ 2\pi(t-s)}}\frac{e^{-\frac{(b-a^1)^2}{4s}}}{\sqrt{(2\pi s)}}da^1
=
\frac{e^{\frac{-(m-b)^2}{4t}}}{\sqrt{2\pi t}}
\Phi_G\left( \sqrt{ \frac{s}{2t(t-s)}}(b-m)\right)
\end{align*}
where $ \Phi_G(u) =\int_{-\infty}^u e^{-z^2/2}dz\leq \demi e^{-u^2/2}$ for $u=b-m<0$ 
according to Lemma \ref{lem-convol-gaussien} (iii). This  yields  the bound:
$
\frac{e^{\frac{-(m-b)^2}{4t}}}{\sqrt{2\pi t}}
e^{-\frac{s(b-m)^2}{4t(t-s)}}$
and
 \begin{align*}
2\int_{-\infty}^b  \frac{e^{-\frac{(m-a^1)^2}{4(t-s)} }}{\sqrt{ 2\pi(t-s)}}\frac{e^{-\frac{(b-a^1)^2}{4s}}}{\sqrt{(2\pi s)}}da^1
&\leq
\frac{e^{-\frac{(m-b)^2}{4t}}}{\sqrt{2\pi t}}
e^{-\frac{s(m-b)^2}{4t(t-s)} }
= \frac{e^{-\frac{(m-b)^2}{4(t-s)}}}{\sqrt{2\pi  t}}.
\end{align*}
Plugging this inequality inside \eqref{I-alpha-int-1} yields with $C_{d,B}=2^{(d+1)/2}\|B\|_{\infty}D$
  \begin{align*}
 \frac{\left| {\mathcal I}^{\alpha,k}[p](m,x;t)\right|}{C_{d,B}}  \leq 
\int_0^t\int_{{\mathbb R}}N(p;s,x_0)
 {\mathbf 1}_{m>b>x_0^1} \frac{e^{-\frac{\|\tilde{x}-\tilde{x_0}\|^2}{4t}}}{\sqrt{(2 \pi t)^d}}
 \frac{e^{-\frac{(m-b)^2}{4(t-s)}-\frac{(m-x^1)^2}{4(t-s)} }}{\sqrt{ 2\pi(t-s)^2{s}}}
 e^{-\frac{(b-x_0^1)^2}{ 4s}}db  ds.
 \end{align*}
Omitting the  indicator functions, Lemma \ref{lem-convol-gaussien} (ii) with $u=m,~v=b,~w=x_0^1,~k=1$
implies
$$
\int_{b<m}\frac{e^{-\frac{(m-b)^2}{4(t-s)}}e^{-\frac{(b-x_0^1)^2}{ 4s}}}{\sqrt{ 2\pi(t-s)2\pi s}}
 db \leq
\sqrt{\frac{2}{2\pi t}}e^{-\frac{(m-x_0^1)^2}{4t}}.
$$
Inserting this result,  we obtain
   \begin{align*}
 \left| {\mathcal I}^{\alpha,k}[p](m,x;t) \right| \leq \sqrt{2}C_{d,B}\int_0^tN(p;s,x_0)
  \frac{e^{-\frac{\|\tilde{x}-\tilde{x_0}\|^2}{4t}}}{\sqrt{(2 \pi t)^{d+1}}}
 \frac{e^{-\frac{(m-x_0^1)^2}{4t}-\frac{(m-x^1)^2}{4(t-s)} }}{\sqrt{ 2\pi{(t-s)}}}
  ds.
 \end{align*}
For $0<s<t,$ $e^{-\frac{(m-x^1)^2}{4(t-s)}}\leq e^{ -\frac{(m-x^1)^2}{4t}},$ so
    \begin{align*}
 \left| {\mathcal I}^{\alpha,k}[p](m,x;t) \right| \leq \sqrt{2} C_{d,B}\int_0^tN(p;s,x_0)
  \frac{e^{-\frac{\|\tilde{x}-\tilde{x_0}\|^2}{4t}}}{\sqrt{(2 \pi t)^{d+1}}}
 \frac{e^{-\frac{(m-x_0^1)^2}{4t}-\frac{(m-x^1)^2}{4t} }}{\sqrt{ 2\pi{(t-s)}}}
  ds.
 \end{align*}
 Using the definition of $\phi_{d+1}$ we identify 
     \begin{align*}
 \left| {\mathcal I}^{\alpha,k}[p](m,x;t) \right| \leq \sqrt{2} C_{d,B}\int_0^tN(p;s,x_0)
 \frac{\phi_{d+1}(m-x_0^1,m-x^1,\tilde{x}-\tilde{x_0};2t)}{\sqrt{ 2\pi {(t-s)}}}
  ds
 \end{align*}
and with the definition of $N,$  with respect to a multiplicative  constant:
     \begin{align*}
N( {\mathcal I}^{\alpha,k}[p],x_0,t)
 \leq \sqrt{2} C_{d,B}
 \int_0^t N(p;s,x_0)
 \frac{1}{\sqrt{ 2\pi{(t-s)}}}
  ds.
 \end{align*}
(ii) For  $j=\beta,$ $k=m,1,\cdots,d$
using the definition of ${\mathcal I}^{\beta,k}$ and the fact that the support of $p$ is
included in $\{(m,x),~~m>\max(x_0^1,x^1)\}$ yields $ {\mathcal I}^{\beta,k}[p](m,x;t)= $
 \begin{align*}
\int_0^t \int_{\mathbb{R}^{d+1}}{\mathbf 1}_{m>b>x^1, m>x_0^1,b>a^1} B^{k}(a) \partial_k p_{W^{1*},W}(b-a^1,x-a,t-s)p(m,a,s) da db ds.
 \end{align*}
Thus the support of  $ {\mathcal I}^{\beta,k}[p](.;t)$ is included in $\{(m,x),~~m>\max(x^1,x^0)\}.$
For now on we only consider $(m,x)$ satisfying $m>\max(x^1,x_0^1).$  Definition of ${\mathcal I}^{\beta,k},$ the boundedness of $B,$ 
 the inequality  \eqref{maj-derk-p0}  satisfied by $\partial_kp_{W^*,W}$:
\\
$|\partial_k p_{W^{1*},W}(b-a^1,x-a,t-s)|
\leq \frac{D}{\sqrt{t-s}}
\phi_{d+1}(b-a^1,b-x^1,\tilde x-\tilde a,2(t-s)) $
\\
and the definition of $N(p;t,x_0)$ yield:
 \begin{align*}
 \left| {\mathcal I}^{\beta,k}[p](m,x;t)\right|\leq  \|B\|_{\infty} D \int_0^t \int_{\mathbb{R}^{d+1}}{\mathbf 1}_{m>b>x^1, b>a^1} N(p;s,x_0)\\
\frac{e^{- \frac{(b-a^1)^2}{4(t-s)}- \frac{(b-x^1)^2}{4(t-s)} - \frac{\|\tilde{x}-\tilde{a}\|^2}{4(t-s)}}}{\sqrt{(2\pi)^{d+1}(t-s)^{d+2}}}\frac{e^{- \frac{(m-x_0^1)^2}{4s}- \frac{(m-a^1)^2}{4s} -\frac{\|\tilde{x}_0 -\tilde{a}\|^2}{4s}}}{ \sqrt{(2\pi)^{d+1}{s^{d+1}}}}da db ds.
 \end{align*}
We integrate in $\tilde{a} $ using  Lemma \ref{lem-convol-gaussien} (ii) with $u=\tilde{x},$ $v=\tilde{a}$ et $w=\tilde{x}_0$:
  \begin{align}
  \label{eq:-beta-rac}
& \left| {\mathcal I}^{\beta,k}[p](m,x;t)\right|\leq C_{d,B}.
 \\
 & \int_0^t \int_{\mathbb{R}^{2}}{\mathbf 1}_{m>b>x^1, b>a^1}\frac{e^{-\frac{\|\tilde{x}-\tilde{x}_0\|^2}{4t} }}{\sqrt{(2 \pi t)^{d-1}}}N(p;s,x_0)\nonumber
.\frac{e^{- \frac{(b-a^1)^2}{4(t-s)}- \frac{(b-x^1)^2}{4(t-s)} }}{\sqrt{(2\pi)^2(t-s)^{3}}}\frac{e^{- \frac{(m-x_0^1)^2}{4s}- \frac{(m-a^1)^2}{4s} }}{ \sqrt{(2\pi)^{2}{s^{2}}}}da^1 db ds.
 \end{align}
Using  Lemma \ref{lem-convol-gaussien} (i') for $u=b,$ $v=a^1,$ $w=m$ $k=1$
  \begin{align*}
 & \int_{-\infty}^b \frac{e^{- \frac{(b-a^1)^2}{4(t-s)} }}{\sqrt{2\pi(t-s)}}\frac{e^{- \frac{(m-a^1)^2}{4s} }}{ \sqrt{2\pi s}}da^1= \sqrt{2}\frac{e^{- \frac{(b-m)^2}{4t}}}{\sqrt{2 \pi t}}\Phi_G\left( \sqrt{\frac{t}{4s(t-s)}}[ b - (\frac{s}{t} b + \frac{t-s}{t} m)] \right)\\
 &= \frac{e^{- \frac{(b-m)^2}{4t}}}{\sqrt{2 \pi t}}\Phi_G\left( \sqrt{\frac{t-s}{4st}}[ b - m] \right)\leq  \frac{e^{- \frac{(b-m)^2}{4t}}}{\sqrt{2 \pi t}}e^{- \frac{t-s}{4st}[ b - m]^2}= \frac{e^{- \frac{(b-m)^2}{4s}}}{\sqrt{2 \pi t}}
  \end{align*}
the last bound coming from  Lemma \ref{lem-convol-gaussien} (iii) since $b-m<0.$
\\
We plugg this estimation in \eqref{eq:-beta-rac}

  \begin{align*}
& \left| {\mathcal I}^{\beta,k}[p](m,x;t)\right|\leq 
 \\
& C_{d,B}\int_0^t \int_{\mathbb{R}}{\mathbf 1}_{m>b>x^1}\frac{e^{-\frac{\|\tilde{x}-\tilde{x}_0\|^2}{4t} }}{\sqrt{(2 \pi t)^{d}}}N(p;s,x_0)
\frac{ e^{-\frac{(b-x^1)^2}{4(t-s)} }}{\sqrt{2\pi(t-s)^{2 }}}  \frac{e^{- \frac{(m-x_0^1)^2}{4s}-\frac{(b-m)^2}{4s} }}{ \sqrt{2\pi s}} db ds.
 \end{align*}

We integrate with respect to  $b$ on  $\mathbb{R}$ and we
 use Lemma \ref{lem-convol-gaussien} (ii) with $u=x^1,$ $v=b$, $w=m,$
   $k=1$:
      \begin{align*}
{ \left| {\mathcal I}^{\beta,k}[p](m,x;t)\right|\leq \sqrt{2}C_{d,B}
 \int_0^t 
\frac{e^{-\frac{(m-x^1)^2}{4t}-\frac{\|\tilde{x}-\tilde{x}_0\|^2}{4t} }}{\sqrt{(2 \pi t)^{d+1}}}N(p;s,x_0)
 \frac{e^{- \frac{(m-x_0^1)^2}{4s} }}{ \sqrt{2\pi (t-s)}} ds}.
 \end{align*}
 When $0<s<t,$ $e^{- \frac{(m-x_0^1)^2}{4s} }\leq e^{- \frac{(m-x_0^1)^2}{4t} }$ so:
    \begin{align*}
 \left| {\mathcal I}^{\beta,k}[p](m,x;t)\right|\leq \sqrt{2}C_{d,B}\int_0^t \frac{e^{-\frac{(m-x^1)^2}{4t}-\frac{\|\tilde{x}-\tilde{x}_0\|^2}{4t} {- \frac{(m-x_0^1)^2}{4t} }}}{\sqrt{(2 \pi t)^{d+1}}}N(p;s,x_0)
 \frac{1}{ \sqrt{2\pi (t-s)}}  ds.
 \end{align*}
{  Under the integral we identify the factor}  $\phi_{d+1}(m-x^1_0,m-x^1, \tilde{x}-\tilde{x_0};2t)$ so
     \begin{align*}
 \left| {\mathcal I}^{\beta,k}[p](m,x;t)\right|\leq  \sqrt{2}C_{d,B}\phi_{d+1}(m-x^1_0,m-x^1, \tilde{x}-\tilde{x_0};2t)\int_0^t N(p;s,x_0)
 \frac{1}{ \sqrt{2\pi {(t-s)}}} db ds.
 \end{align*}
Finally using the definition of $N$ \eqref{maj-p-gaus} we  have proved
   \begin{align*}
N( {\mathcal I}^{\beta,k}[p],x_0,t) \leq \sqrt{2} C_{d,B}\int_0^tN(p;s,x_0)
 \frac{1}{\sqrt{ 2\pi{(t-s)}}}
  ds
 \end{align*}
which achieves the  proof of Lemma \ref{maj-ikj-x1m}. 
\hfill $\Box$
 

\subsubsection{Proof of Hypothesis \ref{hyp-dens-outil} (ii), case $A=I_d$}
\begin{pro}
\label{cont-bis}
For any $\mu_0$ probability measure on ${\mathbb R}^d,$
 for all $(m, \tilde{x},t)\in \mathbb{R}^d\times ]0,T],$    $u \mapsto p_V(m,m-u,\tilde{x},t)$ admits a limit when $u$ goes to 0, $u>0.$
\end{pro}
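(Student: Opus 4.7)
The plan is to combine the series representation of $p_V$ with the pointwise Gaussian bounds and a termwise application of Lebesgue's dominated convergence theorem, both inside each term of the series and for the interchange between limit and infinite sum.

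First, by Proposition \ref{pro-dvpt-serie-L1} we have the pointwise identity
$p_V(m,x;t,\mu_0)=\sum_{n=0}^{\infty}p_n(m,x;t,\mu_0),$
with $p_{n+1}=\II(p_n)$ and $p_0$ explicit in terms of $p_{W^{*1},W}$. Moreover, from Proposition \ref{pro-maj-pv-g}(i), integrating against $\mu_0$,
$$\big|p_n(m,m-u,\tilde{x};t,\mu_0)\big|\leq C_n t^{n/2}\int_{\mathbb{R}^d}\phi_{d+1}(m-x_0^1,u,\tilde{x}-\tilde{x}_0;2t)\mu_0(dx_0).$$
Since $\phi_{d+1}(m-x_0^1,u,\tilde{x}-\tilde{x}_0;2t)\leq \phi_{d+1}(m-x_0^1,0,\tilde{x}-\tilde{x}_0;2t)$ for all $u>0$, and $\sum_n C_n t^{n/2}<\infty$ for every $t>0$, we obtain a $u$-uniform, summable majorant.

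Second, I would show by induction on $n$ that for each $(m,\tilde{x},t)\in\mathbb{R}^d\times(0,T]$ the map $u\mapsto p_n(m,m-u,\tilde{x};t,\mu_0)$ admits a limit as $u\to 0^+$. For $n=0$, the function $p_{W^{*1},W}(\cdot,\cdot;t)$ is smooth on $\bar{\Delta}$ (see the Appendix), hence continuous up to the boundary $\{b=a^1\}$, and dominated convergence in $\mu_0(dx_0)$ provides the limit. For the inductive step, $p_{n+1}=\II(p_n)$ is a sum of integrals of the form $\II^{k,\alpha}(p_n)$ and $\II^{k,\beta}(p_n)$; in both, the kernel is $\partial_k p_{W^{*1},W}$ evaluated at arguments whose first two coordinates are $(m-a^1,m-u-a^1)$ (for $\alpha$) or $(b-a^1,m-u-a^1)$ (for $\beta$). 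By smoothness of $p_{W^{*1},W}$ up to the boundary, the integrands converge pointwise as $u\to 0^+$ to the corresponding quantity with $u=0$. The Gaussian bound \eqref{maj-derk-p0} together with the inductive $L^1$-control on $p_n$ supplies an integrable dominating function, so dominated convergence gives the existence of the limit.

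Third, denoting $\ell_n(m,\tilde{x};t,\mu_0):=\lim_{u\to 0^+}p_n(m,m-u,\tilde{x};t,\mu_0)$, the $u$-uniform Gaussian majorant described in the first paragraph and the summability $\sum_n C_n t^{n/2}<\infty$ justify interchanging the limit with the series, yielding
$$\lim_{u\to 0^+}p_V(m,m-u,\tilde{x};t,\mu_0)=\sum_{n=0}^{\infty}\ell_n(m,\tilde{x};t,\mu_0),$$
which defines $p_V(m,m,\tilde{x};t,\mu_0)$ and proves Proposition \ref{cont-bis}.

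The main technical obstacle is the inductive step: one must verify that the kernels $\partial_k p_{W^{*1},W}$ remain continuous when the first two coordinates coincide and that the domination \eqref{maj-derk-p0} is tight enough to survive the iterated convolution structure of $\II$. Both points rest on the explicit reflection-principle formula for $p_{W^{*1},W}$ and on the Gaussian estimates developed in Lemma \ref{lem-appendice-1} and Lemma \ref{maj-ikj-x1m}, which are already available at this stage of the paper.
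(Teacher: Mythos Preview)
Your overall strategy differs from the paper's. The paper does \emph{not} argue by induction along the series $\sum_n p_n$; instead it uses the \emph{finite} fixed-point decomposition $p_V=p^0-\sum_{k}(p^{k,\alpha}+p^{k,\beta})$ of Proposition~\ref{rem-dens-mal}, together with the already established Gaussian bound on $p_V$ itself (Proposition~\ref{pro-maj-pv-g}(ii)), and then proves separately (Lemmas~\ref{cont-ui--p0bis}, \ref{lem-cont-alpha-bis}, \ref{lem-cont-beta-bis}) that each of these finitely many pieces has a limit as $u\to 0^+$. This avoids any induction and any interchange of limit with an infinite sum.

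Your sketch has a genuine gap in the inductive step, precisely for the $\beta$-pieces. In $\II^{k,\beta}[p_n](m,m-u,\tilde x;t)$ the kernel is $\partial_k p_{W^{*1},W}(b-a^1,\,m-u-a^1,\,\tilde x-\tilde a;\,t-s)$, whose support forces $m-u<b<m$. If you try to produce a $u$-independent dominating function by bounding $e^{-(b-m+u)^2/4(t-s)}\le 1$ in \eqref{maj-derk-p0} and relaxing the constraint to $a^1<b<m$, then after integrating out $\tilde a$, $a^1$ and $b$ you are left with a factor $(t-s)^{-1}$ in $s$, which is \emph{not} integrable on $[0,t]$. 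So the sentence ``the Gaussian bound \eqref{maj-derk-p0} together with the inductive $L^1$-control on $p_n$ supplies an integrable dominating function'' is not justified for the $\beta$-terms. The paper circumvents this by keeping the $u$-dependent exponential, integrating explicitly in $(a,b)$ first to obtain a factor
\[
\Phi_G\Big(\sqrt{\tfrac{s}{2t(t-s)}}\,u\Big)-\Phi_G\Big(-\sqrt{\tfrac{t-s}{2ts}}\,u\Big),
\]
which is bounded by $1$ and tends to $0$ as $u\to 0^+$, and only then applying dominated convergence in $ds\,\mu_0(dx_0)$ (see Lemma~\ref{lem-cont-beta-bis}). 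In particular the paper shows that the $\beta$-contribution actually vanishes at the boundary, a fact your argument does not isolate.

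Your series/interchange step (the ``third'' paragraph) is fine once the termwise limits exist, thanks to Proposition~\ref{pro-maj-pv-g}(i) and the summability of $C_n t^{n/2}$. But to repair the inductive step you would have to replay, for each $n$, the same delicate $u$-dependent computation that the paper carries out once for $p^{k,\beta}=\II^{k,\beta}[p_V]$. At that point the finite decomposition of Proposition~\ref{rem-dens-mal} is both shorter and cleaner.
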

\proof

%
%

{ The proof  is a consequence of the three}  following lemmas.
 \begin{lem}
\label{cont-ui--p0bis}
Recall that { $
p^0(m,x;t)=\int_{{\mathbb R}^d}p_{W^{*1},W}(m-x_0^1,x-x_0;t)\mu_0(dx_0) .
$}
\begin{align*}
 ~~\lim_{u\rightarrow 0,u>0} p^0(b,b-u,\tilde{a};t) =p^0(b,b,\tilde{a};t) ,~~\forall u>0, ~~(b,\tilde{a}) \in {\mathbb R}^d, {\forall t>0}.
\end{align*}
\end{lem}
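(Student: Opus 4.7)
\textbf{Proof plan for Lemma \ref{cont-ui--p0bis}.}

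The plan is to apply Lebesgue's dominated convergence theorem to the integral representation
\[
p^0(b,b-u,\tilde a;t) = \int_{\mathbb{R}^d} p_{W^{*1},W}\bigl(b-x_0^1,\,b-u-x_0^1,\,\tilde a-\tilde x_0;\,t\bigr)\,\mu_0(dx_0).
\]
First I would establish pointwise convergence of the integrand. As recalled in the discussion following Proposition \ref{rem-dens-mal}, the function $p_{W^{*1},W}(\cdot,\cdot;t)$ is $C^\infty$ on the closed set $\bar\Delta=\{(b,a):\ b\ge a^1_+\}$. Hence for every $x_0$ with $x_0^1\le b$, sending $u\to 0^+$ gives
\[
p_{W^{*1},W}(b-x_0^1,\,b-u-x_0^1,\,\tilde a-\tilde x_0;\,t)\;\longrightarrow\; p_{W^{*1},W}(b-x_0^1,\,b-x_0^1,\,\tilde a-\tilde x_0;\,t),
\]
while for $x_0^1>b$ both sides vanish (the density $p_{W^{*1},W}$ is supported on $\{b\ge a^1_+\}\supset\{b\ge a^1\}$).

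Next I would produce a $\mu_0$-integrable dominating function, uniform in $u\in(0,1]$. Using the Gaussian upper bound of Lemma \ref{lem-appendice-1} (point (ii)), already applied in the proof of Proposition \ref{pro-maj-pv-g}, namely
\[
p_{W^{*1},W}(b-x_0^1,b-u-x_0^1,\tilde a-\tilde x_0;t)\ \le\ \phi_{d+1}(b-x_0^1,\,u,\,\tilde a-\tilde x_0;\,2t)\,\mathbf{1}_{\{b>x_0^1\}},
\]
and the explicit form of $\phi_{d+1}$ in \eqref{def-phi}, the exponential factor is $e^{-[(b-x_0^1)^2+u^2+\|\tilde a-\tilde x_0\|^2]/(4t)}$. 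Since $u^2\ge 0$, one can drop that term to obtain, for all $u\in(0,1]$,
\[
p_{W^{*1},W}(b-x_0^1,b-u-x_0^1,\tilde a-\tilde x_0;t)\ \le\ \frac{1}{(2\pi t)^{(d+1)/2}}\exp\!\Bigl(-\tfrac{(b-x_0^1)^2+\|\tilde a-\tilde x_0\|^2}{4t}\Bigr)=:\Phi(x_0).
\]
The function $\Phi$ is bounded, so since $\mu_0$ is a probability measure, $\Phi\in L^1(\mu_0)$.

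Combining the two steps, Lebesgue's dominated convergence theorem applies and yields
\[
\lim_{u\to 0^+} p^0(b,b-u,\tilde a;t) = \int_{\mathbb{R}^d} p_{W^{*1},W}(b-x_0^1,\,b-x_0^1,\,\tilde a-\tilde x_0;\,t)\,\mu_0(dx_0) = p^0(b,b,\tilde a;t),
\]
which is the claim. The only potential subtlety is the use of continuity of $p_{W^{*1},W}$ up to the boundary $\{b=a^1\}$, but this is exactly the $C^\infty$ regularity on $\bar\Delta$ already invoked earlier in the paper; no new computation is required.
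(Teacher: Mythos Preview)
Your proposal is correct and follows essentially the same strategy as the paper: dominated convergence with a bounded majorant, integrable against the probability measure $\mu_0$. The only cosmetic difference is that the paper writes out the explicit integrand $2\frac{b+u-x_0^1}{\sqrt{(2\pi)^d t^{d+2}}}e^{-\frac{(b+u-x_0^1)^2}{2t}-\frac{\|\tilde a-\tilde x_0\|^2}{2t}}\mathbf{1}_{b\ge x_0^1,\,u\ge 0}$ and bounds it directly by a constant of order $t^{-(d+1)/2}$ (via the boundedness of $y\mapsto ye^{-y^2/2}$), whereas you invoke the Gaussian estimate of Lemma~\ref{lem-appendice-1}(ii) and then drop the $e^{-u^2/4t}$ factor; either route produces a bounded dominating function independent of $u$, so the arguments are interchangeable.
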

\begin{proof}
 We have
$
p^0(b,b-u,\tilde{x};t)=
{\int_{{\mathbb R}^d}} { 2\frac{b+u-x_0^1}{\sqrt{(2\pi)^d 
{\mathbf{ t^{d+1}}}}}e^{-\frac{(b+u-x_0^1)^2}{2t} -\frac{\|\tilde{x}-\tilde{x}_0\|^2}{2t}}}{ {\mathbf 1}_{b \geq x_0^1,~~u \geq 0}\mu_0(dx_0)}.
$
Then, { since the integrand is {dominated}  by
{ $\frac{{ D}}{\sqrt{(2 \pi)^d t^{d+1}}}$}
 and $\mu_0$ is a probability measure, using {Lebesgue's dominated convergence} Theorem} yields:
\begin{align*}
\lim_{u \rightarrow 0,u>0} p^0(b,b-u,\tilde{x};t)=p^0(b,b,\tilde{x};t),~~\forall ~~(b,\tilde{x}) \in {\mathbb R}^d, { \forall t>0}.
\end{align*}
\end{proof}
\begin{lem}
\label{lem-cont-alpha-bis}
For $k=m,1,...,d$ recall that
$$p^{k,\alpha}(m,x;t)=\int_0^t\int_{\mathbb{R}^{d+1}} {\mathbf 1}_{b<m } B^k(a) \partial_{k}p_{W^{*1}, W} (m-a^1,x-a, t-s) p_V(b,a;s) db da ds.
$$
The map $u \mapsto p^{k,\alpha}(m,m-u,\tilde{x};t)$ converges to $p^{k,\alpha}(m,m,\tilde{x};t)$ when $u$ goes to $0^+.$
\end{lem}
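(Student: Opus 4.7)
The plan is to apply Lebesgue's dominated convergence theorem to the integral
\[
p^{k,\alpha}(m, m-u, \tilde{x}; t) = \int_0^t\!\!\int_{\mathbb{R}^{d+1}} {\mathbf 1}_{b < m}\, B^k(a)\, \partial_k p_{W^{*1}, W}(m - a^1, m - u - a^1, \tilde{x} - \tilde{a};\, t - s)\, p_V(b, a; s)\, db\, da\, ds,
\]
which is obtained by substituting $x = (m-u, \tilde{x})$ into the definition of $p^{k,\alpha}$. Only the factor $\partial_k p_{W^{*1}, W}(m - a^1, m - u - a^1, \tilde{x} - \tilde{a};\, t - s)$ depends on $u$, and since $p_{W^{*1}, W}$ is $C^\infty$ on the closed half-space $\bar{\Delta}$ (cf.\ the appendix formula \eqref{dens-cas-mb}), this factor converges pointwise, for every $s \in (0, t)$ and almost every $(b, a)$, to its value at $u = 0$.

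For the domination, I plan to use the Gaussian control \eqref{maj-derk-p0} from Lemma \ref{lem-appendice-1}, which yields
\[
\bigl|\partial_k p_{W^{*1}, W}(m - a^1, m - u - a^1, \tilde{x} - \tilde{a};\, t - s)\bigr| \leq \frac{D}{\sqrt{t - s}}\, \phi_{d+1}(m - a^1, u, \tilde{x} - \tilde{a};\, 2(t-s)),
\]
and then exploit the fact that both the indicator ${\mathbf 1}_{m - a^1 > u}$ and the Gaussian factor $e^{-u^2/(4(t-s))}$ entering $\phi_{d+1}$ are non-increasing in $u \geq 0$. This bounds the right-hand side, uniformly in $u \in [0, 1]$, by $\frac{D}{\sqrt{t-s}}\,\phi_{d+1}(m - a^1, 0, \tilde{x} - \tilde{a};\, 2(t-s))$. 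Multiplying by $\|B\|_\infty\, p_V(b, a; s)\, {\mathbf 1}_{b < m}$ then produces a $u$-independent dominating function.

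It remains to verify that this dominating function is integrable with respect to $ds\, db\, da$, which is precisely the computation already carried out in the proof of Lemma \ref{maj-ikj-x1m}: inserting the Gaussian bound $p_V(b, a; s) \leq C_T\, \phi_{d+1}(b - x_0^1, b - a^1, \tilde{a} - \tilde{x}_0;\, 2s)$ supplied by Proposition \ref{pro-maj-pv-g} (ii) and performing the successive integrations in $b$, $a^1$, $\tilde{a}$, and $s$ as in that lemma yields a finite quantity of order $\sqrt{t}$ (for a general initial law $\mu_0$ the same bound follows by integrating against $\mu_0$). The main subtle point is the pointwise convergence at $u = 0^+$, where the second argument of $\partial_k p_{W^{*1}, W}$ reaches the diagonal boundary of $\bar{\Delta}$; this is handled cleanly by the $C^\infty$-regularity of $p_{W^{*1}, W}$ up to this boundary. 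Lebesgue's theorem then delivers the announced limit.
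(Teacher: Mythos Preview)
Your proposal is correct and follows essentially the same route as the paper: dominated convergence, with the $u$-dependent factor in the Gaussian bound \eqref{maj-derk-p0} discarded (since $e^{-u^2/4(t-s)}\le 1$) and $p_V$ controlled via Proposition~\ref{pro-maj-pv-g}~(ii), followed by the same successive Gaussian integrations in $\tilde a$, $a^1$, $b$. Your shortcut of invoking the computation in Lemma~\ref{maj-ikj-x1m} (specialised to $m-x^1=0$ and $N(p_V;s,x_0)\le C_T$) is legitimate, whereas the paper simply redoes those integrations; one minor inaccuracy is that the indicator coming from $\phi_{d+1}$ is not literally ${\mathbf 1}_{m-a^1>u}$, but since any indicator is bounded by $1$ your uniform domination still holds.
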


\begin{proof}
The proof wil be a consequence of Lebesgue dominated theorem.
First, the map $u \mapsto {\mathbf 1}_{b<m } \partial_{k}p_{W^{*1}, W} (m-a^1,m-u-a^1,\tilde{x}-\tilde{a}; t-s) p_V(b,a;s)$ {converges to ${\mathbf 1}_{b<m } \partial_{k}p_{W^{*1}, W} (m-a^1,ma^1,\tilde{x}-\tilde{a}; t-s) p_V(b,a;s)$  when $u$ goes to $0^+.$}

Second it is dominated by $q^{k,\alpha}(m,\tilde{x},a,b;s,x_0):=$
\begin{align*}
|B^k(a)|{\mathbf 1}_{b<m }\sup_{u>0}  \left|\partial_{k}p_{W^{*1}, W} (m-a^1,m-u-a^1,\tilde{x}-\tilde{a}; t-s) p_V(b,a;s,x_0)\right|
\end{align*}

We seek to prove that

\begin{align}
\label{maj-lebesgue-p-aplpha}
\int_0^T \int_{\mathbb{R}^{2d+1}}q^{k,\alpha}(m,\tilde{x},a,b;s,x_0) ds db da \mu_0(dx_0)<+\infty.
\end{align}
According to estimation \eqref{maj-derk-p0} of $\partial_k p_{W^{*1},W}$ and estimation (ii) of Proposition \ref{pro-maj-pv-g},  we obtain
\begin{align*}
q^{k,\alpha}(m,\tilde{x},a,b;s,x_0)&\leq 
\|B\|_{\infty}{\mathbf 1}_{ m>b>a^1}\frac{ D}{\sqrt{t-s} \sqrt{2\pi(t-s)}^{d+1} }
 \exp [ -\frac{(m-a^1)^2}{4(t-s)} -\frac{\|\tilde{x}-\tilde{a}\|^2}{4(t-s)} ]
 \\
 &\frac{C_T}{\sqrt{2\pi s}^{d+1} }\exp[- \frac{(b-x_0^1)^2}{4s} - \frac{(b-a^1)}{4s} - \frac{\|\tilde{x}_0-\tilde{a}\|^2}{4s}].
\end{align*}
We integrate with respect to $\tilde{a}$ using Lemma  \ref{lem-convol-gaussien} (ii) for $k=d+1,$ $u=\tilde{x},$ $v=\tilde{a}$ and $w=\tilde{x}_0:$
\begin{align*}
\int_{\mathbb{R}^{d-1}}q^{k,\alpha}(m,\tilde{x},a^1,b;s,x_0^1)d\tilde{a} &\leq {\mathbf 1}_{ m>b>a^1}\frac{\|B\|_{\infty} C_T D2^{(d-1)/2}}{\sqrt{t-s} \sqrt{2\pi(t-s)}^{2} \sqrt{2\pi s}^{2}}
\frac{e^{-\frac{|\tilde{x}-\tilde{x}_0\|^2}{4t}}}{\sqrt{2 \pi t}^{d-1}}\\
~~~~~~~~~~~~~~~~~~~~~~~~~~~& \exp [ -\frac{(m-a^1)^2}{4(t-s)} - \frac{(b-x_0^1)^2}{4s} - \frac{(b-a^1)^2}{4s} ].
\end{align*}
We integrate with respect to $a_1$ between $-\infty$ and $b$  using Lemma \ref{lem-convol-gaussien} {\bf (i')}
for  $u=m,$ $v=a^1$ and $w=b$
\begin{align*}
\int_{\mathbb{R}^{d}}{\mathbf 1}_{ a^1<b}q^{k,\alpha}(m,\tilde{x},b,a;s;x_0)da &\leq {\mathbf 1}_{ b<m}\frac{\|B\|_{\infty} C_T D2^{d/2}}{\sqrt{t-s} \sqrt{2\pi(t-s)} \sqrt{2\pi s}}
\frac{e^{-\frac{|\tilde{x}-\tilde{x}_0\|^2}{4t}}-\frac{(b-m)^2}{4t}}{\sqrt{2 \pi t}^{d}}
\\
& \exp [ - \frac{(b-x_0^1)^2}{4s} ]\Phi_G\left(\sqrt{\frac{t}{2s(t-s)}}( b- [\frac{s}{t}m + \frac{(t-s)}{t}b])\right) .
\end{align*}
Note that $ \sqrt{\frac{t}{2s(t-s)}}( b- [\frac{s}{t}m + \frac{(t-s)}{t}b])= \sqrt{\frac{s}{2t(t-s)}}(b-m) $ and using Lemma \ref{lem-convol-gaussien} (iii)
\begin{align*}
\int_{\mathbb{R}^{d}}q^{k,\alpha}(m,\tilde{x},b,a;s,x_0)da &\leq {\mathbf 1}_{ b<m}\frac{\|B\|_{\infty} C_T D2^{d/2}}{\sqrt{t-s} \sqrt{2\pi(t-s)} \sqrt{2\pi s}}
\frac{e^{-\frac{|\tilde{x}-\tilde{x}_0\|^2}{4t}-\frac{(b-m)^2}{4t}}}{\sqrt{2 \pi t}^{d}}\\
~~~~~~~~~~~~~~~~~~~~~~~~~~~& \exp [ - \frac{(b-x_0^1)^2}{4s}]\exp[ -\frac{s}{t(t-s)}\frac{(b-m)^2}{4}] .
\end{align*}
We observe that $\frac{1}{t} +\frac{s}{t(t-s)}=\frac{1}{t-s}$ so that $\exp[-\frac{(b-m)^2}{4t}]
\exp[ -\frac{s}{t(t-s)}\frac{(b-m)^2}{4}]=
\exp[ -\frac{(b-m)^2}{4(t-s)}].$
\\
We integrate  with respect to $b$ (neglecting the indicator function) using 
 Lemma \ref{lem-convol-gaussien} (ii) for $u=m,$ $v=b$ and $w=x_0^1$ { and} $\exp[-\frac{(m-x^1_0)^2}{4t}]\leq 1$:
\begin{align*}
\int_{\mathbb{R}^{d+1}}q^{k,\alpha}(m,\tilde{x},b,a;s,x_0)dadb &\leq {\mathbf 1}_{ m>x_0^1}\frac{\|B\|_{\infty} C_T D2^{(d+1)/2}}{\sqrt{t-s} }
\frac{ e^{-\frac{|\tilde{x}-\tilde{x}_0\|^2}{4t}}}{\sqrt{ t}^{d+1}}.
\end{align*}
Since $\mu_0$ is a probability measure then 
$\int_0^t\int_{\mathbb{R}^{2d+1}}q^{k,\alpha}(m,\tilde{x},b,a;s,x_0)dadb\mu_0(dx_0)ds <+\infty.$

This is \eqref{maj-lebesgue-p-aplpha} and achieves the proof of Lemma \ref{lem-cont-alpha-bis}
\end{proof}

\begin{lem}
\label{lem-cont-beta-bis}
For $k=m,1,...,d$ recall that
$$p^{k,\beta}(m,x;t)=\int_0^t\int_{\mathbb{R}^{d+1}} {\mathbf 1}_{b<m } B^{k}(a) \partial_{k}p_{W^{*1}, W} (b-a^1,x-a, t-s) p_V(m,a;s) db da ds.
$$
The map $u \mapsto p^{k,\beta}(m,m-u,\tilde{x};t)$ converges to $0$  when $u$ goes to $0^+.$
\end{lem}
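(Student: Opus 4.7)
My plan is to apply Lebesgue's dominated convergence theorem: I will show that the integrand defining $p^{k,\beta}(m, m-u, \tilde x; t)$ vanishes pointwise as $u \to 0^+$ thanks to the support of $p_{W^{*1},W}$, and I will build a $u$-uniform integrable dominator by reusing the Gaussian-convolution strategy from the proof of Lemma~\ref{lem-cont-alpha-bis}.

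First, the density $p_{W^{*1},W}(y, z_1, \tilde z; \tau)$ is supported on $\{y \geq (z_1)_+\}$, and hence so are its partial derivatives. Substituting $y = b - a^1$ and $z_1 = m - u - a^1$, the factor $\partial_k p_{W^{*1},W}(b - a^1, m - u - a^1, \tilde x - \tilde a; t-s)$ vanishes outside $\{b \geq \max(a^1, m-u)\}$. Combined with the indicator $\mathbf{1}_{b < m}$ in the integrand, the effective $b$-range is the shrinking interval $[\max(a^1, m-u), m)$. Thus for almost every $(s, a, b)$ with $b < m$ and $a^1 < m$, the integrand vanishes as soon as $u < \min(m - b,\, m - a^1)$, so the pointwise limit is $0$.

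For the dominator, fix $u_0 \in (0, 1]$ and restrict to $u \in (0, u_0]$. On the support of the integrand one has $b \in [m - u_0, m)$ and $0 \leq b - (m-u) \leq u_0$. Using the Gaussian bound \eqref{maj-derk-p0} on $\partial_k p_{W^{*1},W}$, the trivial estimate $e^{-(b-(m-u))^2 /(4(t-s))} \leq 1$, and the Gaussian upper bound of Proposition~\ref{pro-maj-pv-g}(ii) on $p_V(m, a; s, x_0)$, the modulus of the integrand is bounded by
\[
\|B\|_\infty D C_T \,\frac{\mathbf{1}_{[m-u_0, m)}(b)}{\sqrt{t-s}\,(2\pi(t-s))^{(d+1)/2}} \, e^{-\frac{(b-a^1)^2 + \|\tilde x - \tilde a\|^2}{4(t-s)}} \, \phi_{d+1}(m-x_0^1, m-a^1, \tilde a - \tilde x_0; 2s),
\]
which is independent of $u$, and whose dependence on $x_0$ will eventually be integrated against $\mu_0$.

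Integrability of this dominator in $(s, a, b, x_0)$ will follow from the same chain of Gaussian convolutions used in the proof of Lemma~\ref{lem-cont-alpha-bis}: I will integrate first in $\tilde a$ using Lemma~\ref{lem-convol-gaussien}(ii), then in $a^1$ up to $b$ using Lemma~\ref{lem-convol-gaussien}(i') together with the tail estimate (iii), and finally in $b$ over $[m-u_0, m)$ using Lemma~\ref{lem-convol-gaussien}(ii) one more time; the resulting bound in $s$ is of order $(t-s)^{-1/2}$, integrable on $[0,t]$ and preserved under integration against $\mu_0$. Dominated convergence then gives $\lim_{u \to 0^+} p^{k,\beta}(m, m-u, \tilde x; t) = 0$. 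The main technical difficulty will be precisely arranging these convolutions so that the $u$-independent dominator has no non-integrable singularity at $s = t$; this is essentially already handled by the ordering of integrations in Lemma~\ref{lem-cont-alpha-bis}.
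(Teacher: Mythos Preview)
Your pointwise-vanishing argument is fine, but the $u$-independent dominator you build is \emph{not} integrable in $s$, so dominated convergence does not apply as stated. The difficulty is precisely the factor $e^{-(b-(m-u))^2/(4(t-s))}$ that you discard: it is the only $(t-s)$-Gaussian in the variable $b$. After you drop it, the Gaussian bound on $\partial_k p_{W^{*1},W}$ contributes $(t-s)^{-(d+2)/2}$. Integrating $\tilde a$ via Lemma~\ref{lem-convol-gaussien}(ii) absorbs $(t-s)^{-(d-1)/2}$, and integrating $a^1$ up to $b$ via (i')+(iii) absorbs one more $(t-s)^{-1/2}$; what remains is of order $(t-s)^{-1}$ times $e^{-(b-m)^2/(4s)}\mathbf 1_{[m-u_0,m)}(b)$. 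There is no $(t-s)$-Gaussian left in $b$ to feed into another application of (ii), so the $b$-integral over $[m-u_0,m)$ cannot cancel the remaining $(t-s)^{-1/2}$, and the bound blows up non-integrably at $s=t$. Your final sentence anticipates this obstacle but the ordering from Lemma~\ref{lem-cont-alpha-bis} does not resolve it: in the $\alpha$ case the surviving $(t-s)$-Gaussian was $e^{-(m-a^1)^2/(4(t-s))}$, which does not depend on $u$ and could safely be kept; in the $\beta$ case the corresponding factor is $e^{-(b-m+u)^2/(4(t-s))}$, which does depend on $u$ and cannot simply be bounded away.

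The paper's route avoids this by \emph{not} seeking a $u$-independent dominator on the full $(s,a,b)$-integral. Instead it keeps both $u$-dependent pieces, namely $\mathbf 1_{m-u<b<m}$ and $e^{-(b-m+u)^2/(4(t-s))}$, and carries out the $\tilde a$-, $a^1$-, and $b$-integrations explicitly. The $b$-integral over $[m-u,m)$, computed via Lemma~\ref{lem-convol-gaussien}(i'), produces a difference $\Phi_G\bigl(\sqrt{\tfrac{s}{2t(t-s)}}\,u\bigr)-\Phi_G\bigl(-\sqrt{\tfrac{t-s}{2ts}}\,u\bigr)$ which simultaneously absorbs the last $(t-s)^{-1/2}$, is bounded by $1$, and tends to $0$ pointwise in $(s,x_0)$ as $u\to 0^+$. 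Dominated convergence is then applied only to the remaining $ds\,\mu_0(dx_0)$ integral, with dominator $C(t-s)^{-1/2}$.
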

\begin{proof}
Using estimation \eqref{maj-derk-p0} of $\partial_kp_{W^{*1},W}$  and  
estimation (ii) of Proposition \ref{pro-maj-pv-g} concerning  $p_V,$ 
 we dominate 
the integrand which defines $p^{k,\beta}(m,m-u,\tilde{x};t)$ by :
 $q^{k,\beta}(m,u,\tilde{x},a,b,x_0,s):=$
\begin{align*}
{\mathbf 1}_{m-u<b<m,a^1<b }
\frac{e^{-\frac{(b-a^1)^2}{4(t-s)} -\frac{(b-m+u)^2}{4(t-s)} -\frac{\|\tilde{x}-\tilde{a}\|^2}{4(t-s)} -\frac{(m-x_0^1)^2}{4s} - \frac{(m-a^1)}{4s} -\frac{\|\tilde{x}_0 -\tilde{a}\|^2}{4s}}}{\sqrt{t-s}\sqrt{2 \pi (t-s)}^{d+1}\sqrt{ 2 \pi s}^{d+1}}
\end{align*}
up to a multiplicative constant.
Meaning that
\begin{align}
\label{maj-p-beta}
\left|p^{k,\beta}(m,m-u,\tilde{x};t)\right| \leq \|B\|_{\infty}
\int_0^t \int_{{\mathbb R}^{2d+1}}
q^{k,\beta}(m,u,\tilde{x},a,b,x_0,s) db da ds \mu_0(dx_0).
\end{align}
We integrate with respect to $\tilde{a} $ using Lemma \ref{lem-convol-gaussien} (ii) with $u=\tilde{x},$ $v=\tilde{a}$ and $w=\tilde{x}_0$
\begin{align*}
\int_{R^{d-1}} q^{k,\beta}(m,u,\tilde{x},a,b,x_0,s) d\tilde{a} \leq \sqrt{2}^{d-1} \frac{e^{-\frac{\|\tilde{x}-\tilde{x}_0\|^2}{4t}}}{\sqrt{2 \pi t}^{d-1}}
\frac{e^{-\frac{(b-a^1)^2}{4(t-s)} -\frac{(b-m+u)^2}{4(t-s)}  -\frac{(m-x_0^1)^2}{4s} - \frac{(m-a^1)}{4s} }}{\sqrt{t-s}\sqrt{2 \pi (t-s)}^{2}\sqrt{ 2 \pi s}^{2}}
\end{align*}
We integrate with respect to $a^1$ between $-\infty$ and $b$ using Lemma \ref{lem-convol-gaussien} (i') 
 for $u=b,$ $v=a^{1}$ and $w=m:$
\begin{align*}
&\int_{R^{d}} {\mathbf 1}_{b>a^1}q^{k,\beta}(m,u,\tilde{x},a,b,x_0,s) da \leq
\\
& 
\frac{e^{-\frac{\|\tilde{x}-\tilde{x}_0\|^2}{4t}}}{\sqrt{2 \pi t}^{d}}
\frac{e^{-\frac{(b-m)^2}{4t} -\frac{(b-m+u)^2}{4(t-s)}  -\frac{(m-x_0^1)^2}{4s} }}{\sqrt{t-s}\sqrt{2 \pi (t-s)}\sqrt{ 2 \pi s}}\Phi_G\left(\sqrt{\frac{t}{s(t-s)2}}( b-\frac{s}{t}b-\frac{t-s}{t}m)\right)
\\
&=  
 \frac{e^{-\frac{\|\tilde{x}-\tilde{x}_0\|^2}{4t}}}{\sqrt{2 \pi t}^{d}}
\frac{e^{-\frac{(b-m)^2}{4t} -\frac{(b-m+u)^2}{4(t-s)}  -\frac{(m-x_0^1)^2}{4s} }}{\sqrt{t-s}\sqrt{2 \pi (t-s)}\sqrt{ 2 \pi s}}\Phi_G\left(\sqrt{\frac{t-s}{2st}}( b-m)\right).
\end{align*}
Since $b-m<0$, using  Lemma \ref{lem-convol-gaussien} (iii)
\begin{align*}
\int_{R^{d}}{\mathbf 1}_{b>a^1} q^{k,\beta}(m,u,\tilde{x},a,b,x_0,s) da \leq  
\frac{e^{-\frac{\|\tilde{x}-\tilde{x}_0\|^2}{4t}}}{\sqrt{2 \pi t}^{d}}
\frac{e^{-\frac{(b-m)^2}{4t} -\frac{(b-m+u)^2}{4(t-s)}  -\frac{(m-x_0^1)^2}{4s} }}{\sqrt{t-s}\sqrt{2 \pi (t-s)}\sqrt{ 2 \pi s}}e^{-\frac{t-s}{4st}( b-m)^2}.
\end{align*}
Note that $e^{-\frac{(b-m)^2}{4t}}e^{-\frac{t-s}{4st}( b-m)^2)}
=e^{-\frac{( b-m)^2)}{4s}}.$
%
We integrate this last  bound with respect to $b$ between $m-u$ and $m$ using  Lemma \ref{lem-convol-gaussien} {(i') for the triplet $(m-u,b,m)$ and the fact that  $\frac{s}{t}(m-u) +\frac{t-s}{t}m=\frac{s(m-u)+m(t-s)}{t}$}
\begin{eqnarray*}
&\int_{R^{d+1}}{\mathbf 1}_{m-u<b<m,b<a^1} q^{k,\beta}(m,u,\tilde{x},a,b,x_0,s) da db
\leq  
\frac{e^{-\frac{\|\tilde{x}-\tilde{x}_0\|^2}{4t}}}{\sqrt{2 \pi t}^{d+1}}
\frac{e^{  -\frac{(m-x_0^1)^2}{4s}}}{\sqrt{t-s}}.
\\
&\left[\Phi_G\left(\sqrt{\frac{t}{2s(t-s)}}( m-\frac{s(m-u)+m(t-s)}{t}) \right)
-  \Phi_G\left( \sqrt{\frac{t}{2s(t-s)}}( m-u-\frac{s(m-u)+m(t-s)}{t}) \right)\right].
\end{eqnarray*}
Then,
\begin{align*}
&\int_{R^{d+1}}
q^{k,\beta}(m,u,\tilde{x},a,b,x_0,s) da db
\leq 
\\
&
\frac{e^{-\frac{\|\tilde{x}-\tilde{x}_0\|^2}{4t}}}{\sqrt{2 \pi t}^{d+1}}
\frac{e^{  -\frac{(m-x_0^1)^2}{4s}}}{\sqrt{t-s}}
\left[\Phi_G\left( \sqrt{\frac{s}{2t(t-s)}}u \right)
-  \Phi_G\left(- \sqrt{\frac{t-s}{2t(t-s)}} u \right)\right].
\end{align*}
Note that
$\lim_{u\rightarrow 0} \Phi_G\left( \sqrt{\frac{s}{2t(t-s)}}u \right)
-  \Phi_G\left(- \sqrt{\frac{t-s}{2t(t-s)}} u \right)=0$ and
\\
$\left| \Phi_G\left( \sqrt{\frac{s}{2t(t-s)}}u \right)
-  \Phi_G\left(- \sqrt{\frac{t-s}{2t(t-s)}} u \right)\right| \leq 1.$
Since  $\mu$ is a probability measure, using Lebesgue dominated theorem
\begin{align*}
\lim_{u\rightarrow 0^+} \int_0^t \int_{R^{2d+1}}{\mathbf 1}_{m-u<b<m,b<a^1} q^{k,\beta}(m,u,\tilde{x},a,b,x_0,s) da db \mu_0(dx_0) ds=0.
\end{align*}
Finally  estimation \eqref{maj-p-beta} yields
$
\lim_{u\rightarrow 0^+} p^{k,\beta}(m,m-u, \tilde x;t)=0.
$
\end{proof}


\section{Cas $d=1$}
\label{sec5}
\begin{pro}
  {Let the real diffusion $X$ given by}
$dX_t=B(X_t)dt+A(X_t)dW_t$ where $A,B$ fulfil \eqref{h1h2} and \eqref{hyp:UnEl}.
Then the density of probability $p_V$  satisfies Hypothesis \ref{hyp-dens-outil}, so for any initial law
$\mu_0$ and $F\in C^2_b(\mathbb{R}^2,\mathbb{R}),$
\begin{align}
\label{PDEd1}
{\mathbb E}\left[F(M_t,X_t)\right)& ={\mathbb E} \left[ F(X_0,X_0)\right] + \int_0^t {\mathbb E} \left[ {\mathcal L}\left(F\right)(M_s,X_s)\right] ds\nonumber
\\
&+\frac{1}{2} {\int_0^t
{\mathbb E}\left[
\partial_mF(X_s,{X_s})
\|A(X_s)\|^2
\frac{p_V(X_s,{X_s};s)}
{p_{X}(X_s;s)}\right] ds.}
\end{align}
\end{pro}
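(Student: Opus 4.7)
The strategy is to reduce to the case $A=I_d$ handled in Theorem~\ref{existence-dens} by a Lamperti change of variables. In dimension one, Assumption~\eqref{hyp:UnEl} reads $A(x)^2\geq c>0$, so $A$ has constant sign by continuity; up to replacing $W$ by $-W$ we may assume $A>0$. Fix $x_0\in\mathbb{R}$ and set
\[
\varphi(x):=\int_{x_0}^{x}\frac{du}{A(u)}.
\]
Since $A\in C^2_b$ is bounded above by $\|A\|_\infty$ and below by $\sqrt{c}$, the integrals $\int_{x_0}^{\pm\infty}du/A(u)$ diverge, and $\varphi$ is a $C^3$-diffeomorphism of $\mathbb{R}$ onto $\mathbb{R}$ with $\varphi'=1/A$ bounded above and below by positive constants.

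I would then apply It\^o's formula to $Y_t:=\varphi(X_t)$ to obtain $dY_t=\tilde B(Y_t)\,dt+dW_t$ with
\[
\tilde B(y)=\frac{B(\varphi^{-1}(y))}{A(\varphi^{-1}(y))}-\frac{A'(\varphi^{-1}(y))}{2}.
\]
A short calculation using $B\in C^1_b$, $A\in C^2_b$, $|A|\geq\sqrt{c}$, and $(\varphi^{-1})'=A\circ\varphi^{-1}$ bounded shows $\tilde B\in C^1_b(\mathbb{R},\mathbb{R})$. Since $\varphi$ is strictly increasing, $M^Y_t:=\sup_{s\leq t}Y_s=\varphi(M_t)$, so $(M^Y_t,Y_t)=(\varphi(M_t),\varphi(X_t))$ is the pair running-max/position of the diffusion $Y$, which has unit diffusion coefficient and a $C^1_b$ drift. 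Theorem~\ref{existence-dens} then yields a density $p_{V^Y}(\cdot;t)$ for $(M^Y_t,Y_t)$ that satisfies Hypothesis~\ref{hyp-dens-outil}, and the standard change-of-variables formula gives
\[
p_V(m,x;t)=\frac{p_{V^Y}(\varphi(m),\varphi(x);t)}{A(m)\,A(x)}.
\]

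The final step is to transfer Hypothesis~\ref{hyp-dens-outil} from $p_{V^Y}$ to $p_V$. Setting $u':=\varphi(m)-\varphi(m-u)$, the map $u\mapsto u'$ is a bijection of $(0,\infty)$ onto itself (since $\varphi$ is a diffeomorphism of $\mathbb{R}$), so
\[
\sup_{u>0}p_V(m,m-u;t)\leq \frac{1}{c}\,\sup_{u'>0}p_{V^Y}(\varphi(m),\varphi(m)-u';t).
\]
The change of variable $m\mapsto\varphi(m)$ has Jacobian $A(m)\leq\|A\|_\infty$, so integrating in $(t,m)$ and invoking the corresponding property of $p_{V^Y}$ yields part (i) of Hypothesis~\ref{hyp-dens-outil}. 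Part (ii) follows from the continuity of $\varphi$ and $A$ combined with the analogous property of $p_{V^Y}$, because $u\to 0^+$ forces $u'\to 0^+$ thanks to the lower bound on $\varphi'$. Once Hypothesis~\ref{hyp-dens-outil} is established, Theorem~\ref{thme-sous-hyp-dens} in dimension $d=1$ (where $\|A^1(X_s)\|^2=A(X_s)^2$) delivers formula~\eqref{PDEd1}, with initial term $\mathbb{E}[F(X_0,X_0)]$ because $M_0=X_0$. The only potentially delicate point is this transfer argument for Hypothesis~\ref{hyp-dens-outil}; it is routine thanks to the uniform two-sided bounds $\sqrt{c}\leq|A|\leq\|A\|_\infty$, which ensure that the Lamperti diffeomorphism preserves both the $L^1$ structure and the boundary behaviour.
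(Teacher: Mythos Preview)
Your proposal is correct and follows essentially the same route as the paper: a Lamperti transformation $\varphi'=1/A$ reduces to the unit-diffusion case, Theorem~\ref{existence-dens} supplies Hypothesis~\ref{hyp-dens-outil} for $p_{V^Y}$, and the change-of-variables formula $p_V(m,x;t)=p_{V^Y}(\varphi(m),\varphi(x);t)/(A(m)A(x))$ together with the two-sided bounds on $A$ transfers both parts of the hypothesis back to $p_V$. The only additions relative to the paper are your explicit remark that $\varphi$ is onto $\mathbb{R}$ (because $\int^{\pm\infty}du/A(u)$ diverges) and the observation that $u\mapsto u'=\varphi(m)-\varphi(m-u)$ is a bijection of $(0,\infty)$; both are helpful clarifications rather than new ideas.
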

\begin{proof}
We operate a Lamperti transformation \cite{lamp}. Whithout loss of generality, $A$ can be choosen positive. 
In case $d=1$ Assumption \eqref{hyp:UnEl}: ``$\exists c>0$ such that for any $x\in\mathbb{R},$
$A^2(x)\geq c$''   could be expressed:
\begin{equation}
\label{hypD1}
\exists c>0\mbox{ such that for any }x\in\mathbb{R}, A(x)\geq c.
\end{equation}
{  Let $\varphi$ such that $\varphi'=\frac{1}{A}$
and $\varphi(0)=0$, so that  $\varphi'$ is { uniformly  bounded} and  $\varphi\in C^2(\mathbb{R})$, 
 as is {the function} $A$.
Moreover $\varphi'$ being  strictly positive, $\varphi$ is  strictly increasing hence   invertible and we denote by  $\varphi^{-1}
$   its inverse function.}   {Under the initial condition  $\varphi(0)=0$, } using It\^o formula $Y=\varphi(X)$  satisfies
\begin{equation}
\label{SDEY}
dY_t=\left[\frac{B}{A}\circ\varphi^{-1}(Y_t)-\demi A'\circ\varphi^{-1}(Y_t)\right]dt+dW_t, ~{ Y_0=\varphi(X_0)}.
\end{equation}

{ Let $A_\varphi=1$ and
$B_\varphi:=\frac{B}{A}\circ\varphi^{-1}-\demi A'\circ\varphi^{-1}$ which belongs to} $ C^1_b(\mathbb{R})$
as a consequence of $B\in C^1_b,~A\in C^2_b.$
Obviously, $\varphi'>0$  implies that 
 $\varphi$ is increasing,
 $Y^*_t=\varphi(X_t^*)=\varphi(M_t)$.
 
Theorem 1.1 in
 \cite{LaureMo} can be easily extended to the case where $X$ admits an initial law
 $\mu_0$, thus the law of the pair $(Y^*_t,Y_t)$ admits a 
 density with respect to the Lebesgue measure.
 Moreover, Lemma 2.2 in \cite{LaureMo} sets out
$p_V(b,a;t)=\frac{p_{Y^*,Y}(\varphi(b),\varphi(a);t)}{A(b)A(a)}.$
Now applying Theorem \ref{existence-dens} to the pair $(B_\varphi,1)$ the density $p_{Y^*,Y}$
satisfies Hypothesis \ref{hyp-dens-outil}.
 \\
 Since functions $A$ and $\varphi$ are continuous
 $$\lim_{u\to 0^+} p_V(b,b-u;t)=\frac{p_{Y^*,Y}(\varphi(b),\varphi(b);t)}{A^2(b)}
 $$
 that means $p_V$ satisfies Hypothesis \ref{hyp-dens-outil} (ii).
 \\
 Using now  \eqref{hypD1}
 $$\sup_{u>0} p_V(b,b-u;t)\leq 
 \frac{1}{c^2}\sup_{u>0}p_{Y^*,Y}(\varphi(b),\varphi(b-u);t)$$
 and since $\varphi$ is increasing,
 {if $u>0$, $\varphi(b-u)<\varphi(b)$ and
  denoting $v=\varphi(b)-\varphi(b-u)$ it gets $v>0,$ and }
  $$\sup_{u>0} p_V(b,b-u;t)\leq 
 \frac{1}{c^2}\sup_{v>0}p_{Y^*,Y}(\varphi(b),\varphi(b)-v;t).$$
 After the change of variable $m=\varphi(b)$
 so $db=A(b)dm,$
 $$\int_0^T\int_\mathbb{R}
 \sup_{u>0}p_{Y^*,Y}(\varphi(b),\varphi(b)-u;t)dbdt=
 \int_0^T\int_\mathbb{R} A(\varphi^{-1}(m))
 \sup_{u>0}p_{Y^*,Y}(m,m-u;t){dm}dt.
 $$
 Since $A$ is bounded and $p_{Y^*,Y}$
 satisfies { Hypothesis \ref{hyp-dens-outil} (i)}, 
 \\
 $ \int_0^T\int_\mathbb{R} A(\varphi^{-1}(m))
 \sup_{u>0}p_{Y^*,Y}(m,m-u;t){dm}dt<\infty$
 and $p_V$  satisfies Hypothesis \ref{hyp-dens-outil} (i) and (ii).
 \end{proof}

 \section{Conclusion}
This paper establishes a PDE {of which} the density of the pair $[M_t, X_t]$ running maximum-diffusion process is a weak solution, under  a quite natural assumption on the regularity of $p_V$ around the boundary of $\Delta.$ This assumption is fulfilled when
 the matrix coefficient of diffusion $A$ is the identity matrix or when the dimension $d=1$.
This PDE    is degenerated then the classical
results on uniqueness cannot be applied here.
The case of non
 constant matrix $A$ is an open problem.
  Such generalization could be useful in case of practical
 applications, as the management of barrier options, in models including stochastic volatility.


\begin{appendix}
\section{Tools}
\label{Appendix}

\subsection{Malliavin calculus tools}

The material of this subsection  is taken {from} section 1.2 {in} \cite{nualart}.
\\
Let ${\mathbb H}=L^2([0,T],{\mathbb R}^d)$ endowed with the usual scalar product $\langle.,.\rangle_{{\mathbb H}}$ and the associated norm $\|.\|_{\mathbb H}.$
For all {$h \in {\mathbb H} ,$}
$W(h):= \int_0^T h(t) dW_t$
is a center Gaussian variable with variance equal to $\|h\|^2_{{\mathbb H}}.$
If {$(h_1,h_2 )\in {\mathbb H}^2,$ and} $\langle h_1,h_2\rangle_{{\mathbb H}}=0$, then, the random variables $W(h_i),~i=1,2,$  are independent.
\\
Let ${\mathcal S}$ denote the class of smooth random variables $F$ defined by:
\begin{align}
\label{smooth-va}
F=f(W(h_1),...,W(h_n)),~n\in {\mathbb N},~~
 h_1,...,h_n \in {\mathbb H},
 ~f \in C_b({\mathbb R}^n).
\end{align}

\begin{definition} The derivative of {the} smooth variable $F$ defined in (\ref{smooth-va}) is the ${\mathbb H}$ valued random variable given by
$DF:=\sum_{i=1}^n \partial_i f(W(h_1),...,W(h_n)) h_i.$
\end{definition}
We denote the domain of the operator
$D$ in $L^2(\Omega)$ by $\DD^{1,2}$ meaning that
 ${\mathbb D}^{1,2}$ is the closure of the class of smooth random variables ${\mathcal S}$ with respect to the norm
$$ \|F\|_{1,2}=\left\{ {\mathbb E}[ |F|^2] + {\mathbb E}[ \|DF\|^2_{{\mathbb H}}]\right\}^{1/2}.$$
{\begin{definition}
  ${\mathbb L}^{1,2}$ is the set of  processes $(u_s,s\in [0,T])$ which satisfy \\
  $u\in L^2(\Omega\times [0,T],{\mathbb R}^d)$ and
for all $s\in [0,T],$ $u_s$ belongs to ${\mathbb D}^{1,2}$ and
\\
$
\|u\|^2_{{\mathbb L}^{1,2}}=\|u\|_{L^{2}([0,T]\times \Omega)}^2 + \|Du\|^2_{L^2( [0,T]^2\times \Omega)}<\infty.
$
\end{definition}}
\begin{definition}
Let $u \in {\mathbb L}^{1,2},$ then { the divergence} $\delta(u)$ is the unique random variable of $L^2(\Omega)$ such that
$
{\mathbb E} \left[ F\delta(u)\right]=  {\mathbb E}\left[\langle DF,u\rangle_{{\mathbb H}} \right],~~\forall { F\in {\mathcal S}}$ smooth random variable.
\end{definition}
We { apply Definition 1.3.1 
in \cite{nualart} with  $u\in  {\mathbb L}^{1,2}$ and $G\in{\mathbb D}^{1,2}$:}
\begin{align}
\label{prop-nualart(ii}
 ~~{\mathbb E}\left[ G\delta(u)\right]={\mathbb E}\left[\langle DG,u\rangle_{{\mathbb H}}\right].
\end{align}

\noindent
{ Let $x_0\in {\mathbb R}^d.$}  We introduce  the exponential martingale
\begin{align}
\label{defZ}
{ Z_t^{x_0}}:=\exp \left[ \sum_{k=1}^d
\left(\int_0^t B^k({ x_0+}W_s) dW^k_s - \frac{1}{2}\int_0^t (B^k({ x_0+}W_s))^2ds\right)\right].
\end{align}
 When {there is no ambiguity}, we will omit the exponent $x_0.$
\begin{lem}
\label{lem-z-l12}
Let  $B\in { C^1_b({\mathbb R}^{d},{\mathbb R})},$ then { for all $x_0\in {\mathbb R}^d$}  the process 
\\
$(B(x_0+W_s)Z_s^{x_0},~~s\in [0,T])$ belongs to
${\mathbb L}^{1,2}.$
\end{lem}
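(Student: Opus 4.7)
\textbf{Proof plan for Lemma~\ref{lem-z-l12}.} Recall that membership in $\mathbb{L}^{1,2}$ requires three things: (a) $(B(x_0+W_s)Z_s^{x_0})_{s\in[0,T]}\in L^2(\Omega\times[0,T])$, (b) for each $s$, the random variable $B(x_0+W_s)Z_s^{x_0}$ lies in $\mathbb{D}^{1,2}$, and (c) the Malliavin derivative is square integrable on $[0,T]^2\times\Omega$. The plan is to establish (a), (b), (c) in turn, relying on the boundedness of $B$ and $\nabla B$ to control all relevant quantities uniformly in $(r,s)\in[0,T]^2$.

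First, for~(a), since the stochastic logarithm $M_t=\int_0^t B^k(x_0+W_u)dW_u^k$ of $Z^{x_0}$ has bounded integrand, $\langle M\rangle_t\leq t\|B\|_\infty^2$, hence $\mathbb{E}[(Z_s^{x_0})^p]\leq \exp\bigl(\tfrac{p(p-1)}{2}T\|B\|_\infty^2\bigr)$ for every $p\geq 1$ and $s\in[0,T]$ by a standard exponential martingale moment estimate. Combined with $|B^k(x_0+W_s)|\leq\|B\|_\infty$ this yields $\mathbb{E}\int_0^T\|B(x_0+W_s)Z_s^{x_0}\|^2 ds<\infty$.

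Next, for~(b) and the computation required in~(c), observe that $B(x_0+W_s)\in\mathbb{D}^{1,2}$ by the chain rule since $B\in C^1_b$, with $D_r^j B^k(x_0+W_s)=\partial_j B^k(x_0+W_s)\mathbf{1}_{[0,s]}(r)$. For $Z_s^{x_0}$, Malliavin differentiability follows from the theory of Malliavin calculus for SDE solutions with $C^1_b$ coefficients (Nualart, Section 2.2): the process $Z^{x_0}$ solves the linear SDE $dZ_s=Z_s B^k(x_0+W_s)dW_s^k$ with bounded, $C^1_b$ diffusion coefficient in the driving variable $W$, and the induction/Picard argument that produces the Malliavin derivative converges in $L^2$. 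Equivalently, one may Malliavin-differentiate the explicit formula \eqref{defZ} using the commutation rule $D_r^j\!\int_0^s \phi_u^k dW_u^k=\phi_r^j\mathbf{1}_{r\leq s}+\int_r^s D_r^j\phi_u^k\,dW_u^k$ for adapted Malliavin-differentiable integrands, to obtain, for $r\leq s$,
\begin{equation*}
D_r^j Z_s^{x_0}=Z_s^{x_0}\!\left[B^j(x_0+W_r)+\!\int_r^s \partial_j B^l(x_0+W_u)dW_u^l-\!\int_r^s B^l(x_0+W_u)\partial_j B^l(x_0+W_u)du\right].
\end{equation*}
The product rule then gives $D_r^j[B^k(x_0+W_s)Z_s^{x_0}]=\partial_j B^k(x_0+W_s)\mathbf{1}_{[0,s]}(r)\,Z_s^{x_0}+B^k(x_0+W_s)D_r^j Z_s^{x_0}$.

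Finally, for~(c) it suffices to show $\sup_{(r,s)\in[0,T]^2}\mathbb{E}\|D_r^j[B^k(x_0+W_s)Z_s^{x_0}]\|^2<\infty$, after which integration over $[0,T]^2$ is immediate. The first piece is bounded by $\|\nabla B\|_\infty^2\,\mathbb{E}[(Z_s^{x_0})^2]$. For the second piece, Cauchy--Schwarz gives $\mathbb{E}\|B(x_0+W_s)D_r^j Z_s^{x_0}\|^2\leq \|B\|_\infty^2\sqrt{\mathbb{E}[(Z_s^{x_0})^4]}\sqrt{\mathbb{E}|\Phi_{r,s}^j|^4}$, where $\Phi_{r,s}^j$ denotes the bracket above. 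The fourth moment of $\Phi_{r,s}^j$ is controlled by $\|B\|_\infty^4$, $\|\nabla B\|_\infty^4$ and $T$ thanks to Burkholder--Davis--Gundy applied to the stochastic integral and Jensen to the Lebesgue integral, and the fourth moment of $Z_s^{x_0}$ is bounded as in step~(a). Putting everything together yields $\|B(x_0+W_\cdot)Z_\cdot^{x_0}\|_{\mathbb{L}^{1,2}}<\infty$, which is the claim. The main obstacle is the Malliavin differentiability of $Z_s^{x_0}$ together with the moment control of $D_r Z_s^{x_0}$; once the explicit formula above is justified and the uniform $L^p$ bound on $Z_s^{x_0}$ is in hand, the rest is routine.
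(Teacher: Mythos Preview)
Your proof is correct. Both you and the paper invoke Nualart's Section~2.2 theory for SDEs to obtain $Z^{x_0}\in\mathbb{L}^{1,2}$, but the emphasis differs. The paper's argument is a one-line black-box call: it views $Y=(W,Z)$ as the solution of a $(d{+}1)$-dimensional SDE with null drift and diffusion matrix $\sigma^{j,k}=\delta_{jk}$ for $j\leq d$, $\sigma^{d+1,k}(y)=B^k(x_0+w)z$, then cites Nualart's Lemma~2.2.1/Theorem~2.2.1 to conclude $Z\in\mathbb{L}^{1,2}$, and finishes by noting that composition with $B\in C^1_b$ preserves $\mathbb{L}^{1,2}$. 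You instead make the Malliavin derivative of $Z_s^{x_0}$ explicit via the exponential formula, then verify the $\mathbb{L}^{1,2}$ norm by hand using the uniform $L^p$ bound on $Z$, BDG, and the boundedness of $B,\nabla B$. Your route is longer but more self-contained and gives the explicit expression for $D_r Z_s$ (which could be useful downstream); the paper's route is shorter but leans entirely on the cited results. Either way the substance is the same: $L^p$ control of $Z$ from the bounded stochastic logarithm, plus Malliavin regularity inherited from the $C^1_b$ coefficients.
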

\begin{proof}
 Let $x_0$ be fixed. In this proof we omit the exponent $x_0.$
Note that  $Z_t^2=$
\begin{align*}
&\exp\left(2 \sum_{k=1}^d\int_0^t B^k(x_0+W_s)dW^k_s- \frac{4}{2}\int_0^t\|B(x_0+W_s)\|^2 ds+ \frac{4-2}{2}\int_0^t\|B(x_0+W_s)\|^2 ds\right)
\\
&\leq e^{T\|B\|_{\infty}^2}\exp\left( 2 \sum_{k=1}^d \int_0^tB^k(x_0+W_s)dW^k_s- \frac{4}{2}\int_0^t\|B(x_0+W_s)\|^2 ds\right).
\end{align*}
Then, $Z_t$ belongs to $L^2(\Omega)$ for all $t\in [0,T]$ since
\begin{align}
\label{ma-moement-z2}
\sup_{t \in [0,T]} {\mathbb E} (Z_t^2) \leq e^{T\|B\|_{\infty}^2}.
\end{align}
Note that $Z_t= 1+\sum_{k=1}^d\int_0^t B^k(x_0+W_s) Z_s dW^k_s,~~t\in [0,T].$
Using Lemma 2.2.1, Theorem 2.2.1 
 of \cite{nualart}, and the   definition
  of ${\mathbb L}^{1,2},$
applied to the $\mathbb{R}^{d+1}${-valued} process $Y=(W,Z)$   with  a null drift coefficient,
the matrix ${\Sigma,~(d+1,d),}$ defined by:
\\ 
 $[\sigma^{j,k}(y), 1\leq j,k\leq d]=I_d,$
 $\sigma^{d+1,k}(y)=B^k(x_0^1+y^1,...,x_0^d+y^d)^k z,~~k=1,...,d$, 
 \\
 {we obtain that}
 $Z$  belongs to ${\mathbb L}^{1,2}.$ Since $B$ is continuously differentiable with bounded derivatives, {the process} $\left(B(W_s+x_0) Z_s,~~s\in [0,T]\right)$ belongs to
${\mathbb L}^{1,2}.$
\end{proof}
The following remark will be often used:
using line -15 page 135 of \cite{LaureMo}  or  Exercise 1.2.11 p. 36 in \cite{nualart}
\begin{align}
\label{der-sup}
D_s^1W^{1*}_t={\mathbf 1}_{[0,\tau]}(s)
\mbox{~ where~ } 
 \tau:=\inf\{s, W_s^{1*} =W^{1*}_t\}.
\end{align}

\subsection{Brownian motion case estimations}
\label{brownian}
Let us  recall the density of distribution of the pair 
 $(W^{*,1}_t,W_t^1)$,  
where $W^1$ is   a one-dimensional  Brownian motion  and $W^{*,1}$ its running maximum (see e.g., Section 3.2 in  \cite{CJY}  or {\cite{HKR}}):
\begin{align*}
p_{W^{1*},W^1}(b,a;t)=2\frac{2b-a}{\sqrt{2\pi t^3}}\exp-
\frac{(2b-a)^2}{2t}\1_{b>\sup(a,0)}.
\end{align*}

 Thus, using the independence of the components of the process $W,$ the law of $(W^{1*}_t,W_t)$ has a density with respect to the Lebesgue measure on ${\mathbb R}^{d+1}$ denoted by $p_{W^{1*},W}(.;t):$
\begin{align}
\label{dens-cas-mb}
p_{W^{1*},W}(b,a;t)= 2\frac{(2b-a^1)}{\sqrt{(2\pi)^d t^{d+2}}}e^{-\frac{(2b-a^1)^2}{2t}-\frac{\sum_{k=2}^d |a^k|^2}{2t}}{\mathbf 1}_{b\geq 0,b\geq a^1},~b\in {\mathbb R},~a=(a^1,...,a^d)\in {\mathbb R}^d.
\end{align}
\begin{lem}
\label{lem-appendice-1}
(i) For all $t{ >0},$ $p_{W^{*1},W}(.;t)$ { is the restriction to $\bar\Delta$ of a $C^{\infty}(\mathbb{R}^{d+1})$ function}
and there exists a universal constant $D$ such that { for  $x=b,a^1,a^2,...a^d,$}
\begin{align}
\label{appendix-1}
\left|\partial_xp_{W^{*1},W}(b,a;t)\right|\leq \frac{D}{\sqrt{(4\pi)^d t^{d+2}}}e^{-\frac{b^2+(b-a^1)^2}{4t} -\sum_{k=2}^d \frac{(a^k)^2}{4t}}{\mathbf 1}_{b>\max(a^1,0)}.
\end{align}
As a consequence
$
\sum_{x=b,a^1,...,a^d} \left|\partial_{x}p_{W^{*1},W}(b,a;t-s)\right|
\in L^1([0,t] \times {\mathbb R}^{d+1},dbdads).$
\begin{align*}
(ii)~~p_0(m,x;t,x_0)& \leq& \frac{e^{-\frac{(m-x^1)^2}{4t}-\frac{\|\tilde{x}-\tilde{x}_0\|^2}{4t} {- \frac{(m-x_0^1)^2}{4t} }}}{\sqrt{(2 \pi )^{d+1}t^{d+1}}}{\mathbf 1}_{m>\max(x^1,x^1_0)}\\
&= &2^{(d+1)/2}\phi_{d+1}(m-x^1,m-x_0^1,\tilde x-\tilde x_0;2t){\mathbf 1}_{m>\max(x^1,x^1_0)},
\end{align*}
\end{lem}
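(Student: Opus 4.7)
The plan is to read off everything from the explicit formula \eqref{dens-cas-mb}; the entire lemma reduces to one algebraic identity plus the standard ``polynomial times Gaussian is Gaussian with larger variance'' trick.

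For the smoothness claim in (i), the right-hand side of \eqref{dens-cas-mb}, with the indicator removed, is the product of the affine function $2(2b-a^1)$ with the Gaussian
$\frac{1}{\sqrt{(2\pi)^d t^{d+2}}} \exp\bigl(-\frac{(2b-a^1)^2}{2t} - \frac{\|\tilde a\|^2}{2t}\bigr)$,
which is $C^\infty(\mathbb{R}^{d+1})$; so $p_{W^{*1},W}(\cdot;t)$ is indeed the restriction to $\bar\Delta$ of a smooth function. Writing $u := 2b-a^1$ and $\rho$ for the exponential factor, a direct differentiation yields, on $\Delta$,
\[
\partial_b p = \tfrac{4}{\sqrt{(2\pi)^d t^{d+2}}}\Bigl(1-\tfrac{u^2}{t}\Bigr)\rho,\qquad \partial_{a^1} p = \tfrac{2}{\sqrt{(2\pi)^d t^{d+2}}}\Bigl(\tfrac{u^2}{t}-1\Bigr)\rho,\qquad \partial_{a^k} p = -\tfrac{2ua^k/t}{\sqrt{(2\pi)^d t^{d+2}}}\rho,\ k\geq 2.
\]

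The decisive algebraic observation is the identity
\[
u^2 = (2b-a^1)^2 = b^2 + (b-a^1)^2 + 2b(b-a^1),
\]
which on the set $\{b\geq 0,\; b\geq a^1\}$ gives $u^2 \geq b^2 + (b-a^1)^2$. Consequently,
\[
e^{-u^2/(2t)-\|\tilde a\|^2/(2t)} \leq e^{-u^2/(4t)}\, e^{-\|\tilde a\|^2/(4t)}\, e^{-(b^2+(b-a^1)^2)/(4t)}\, e^{-\|\tilde a\|^2/(4t)}.
\]
The elementary fact that $|Q(v)|e^{-v^2/4}$ is uniformly bounded for any fixed polynomial $Q$ (applied with $v=u/\sqrt t$ for the $\partial_b$ and $\partial_{a^1}$ cases, and with $v=(u,a^k)/\sqrt t$ for $\partial_{a^k}$) absorbs the polynomial prefactors into a pure constant depending only on $d$. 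This produces the bound \eqref{appendix-1}, with the $\sqrt{(4\pi)^d}$ coming from the substitution of the half-variance Gaussian. The integrability statement is then immediate: integrating \eqref{appendix-1} in $(b,a)\in\mathbb{R}^{d+1}$ leaves an $O(1/\sqrt{t-s})$ factor, and $\int_0^t (t-s)^{-1/2}ds <\infty$.

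Part (ii) is proved the same way: specialize \eqref{dens-cas-mb} at $b = m-x_0^1$ and $a = x-x_0$. Then $2b-a^1 = (m-x^1) + (m-x_0^1)$, and the same identity now reads $(2b-a^1)^2 \geq (m-x^1)^2 + (m-x_0^1)^2$ on the indicator support $\{m\geq x^1,\, m\geq x_0^1\}$. The linear factor $2(2b-a^1)$ is absorbed using $|v|e^{-v^2/4}\leq C$, turning $e^{-(2b-a^1)^2/(2t)}$ into a product of Gaussians in the three quantities $m-x^1$, $m-x_0^1$ and $\tilde x - \tilde x_0$, each with variance $2t$, which is exactly the claimed $\phi_{d+1}$-bound.

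The only real obstacle is bookkeeping: matching powers of $2\pi$, $t$, and $2^{(d+1)/2}$ through the substitutions and the ``doubling of variance'' steps so that the constants line up with the statement. The mathematical content is entirely contained in the identity $u^2 \geq b^2 + (b-a^1)^2$ on $\bar\Delta$, which is what makes the support of the running-maximum density compatible with the symmetric three-term Gaussian structure of $\phi_{d+1}$.
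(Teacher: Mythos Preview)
Your proposal is correct and follows essentially the same route as the paper: both proofs hinge on the identity $(2b-a^1)^2 \ge b^2+(b-a^1)^2$ on $\bar\Delta$ together with the absorption of polynomial prefactors into a doubled-variance Gaussian. The paper packages part (i) slightly differently, observing that $p_{W^{*1},W}(b,a;t)=-2\,\partial_{x^1}p_W(2b-a^1,\tilde a;t)$ so that the first-order derivatives of $p_{W^{*1},W}$ become second-order derivatives of $p_W$, and then invokes a Garroni--Menaldi type estimate $|\partial^2_{x^1x^k}p_W(x;t)|\le \frac{C}{t}p_W(x;2t)$; your direct differentiation achieves the same end. For (ii) the paper uses the sharp form $xe^{-x^2}\le e^{-x^2/2}$ rather than a generic $|v|e^{-v^2/4}\le C$, which is what yields the stated bound without an extra multiplicative constant; this is precisely the bookkeeping you flag at the end.
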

\begin{proof}
(i) Let $p_W$ be the density
  of  a $d$ dimensional Brownian motion, and the density of law of $W_t$ $\forall t>0:$ $p_W(.;t)\in C^\infty(\mathbb{R}^d)$:
\begin{align*}
p_W(x;t)= \frac{1}{\sqrt{2^d \pi^d t^d}}e^{- \sum_{k=1}^d \frac{(x^k)^2}{2t}},~~t>0,~~x=(x^1,...,x^d) \in {\mathbb R}^d.
\end{align*}
Its derivative with respect to $x^1$ is 
\begin{align*}
\partial_{x^1}p_W(x;t)= -\frac{x^1}{\sqrt{2^d \pi^d t^{d+2}}}e^{- \sum_{k=1}^d \frac{(x^k)^2}{2t}},~~t>0,~~x=(x^1,...,x^d) \in {\mathbb R}^d.
\end{align*}
Its second derivatives are
\begin{align*}
\partial_{x^1 x^k}^2p_W(x;t)&= \frac{x^1x^k}{\sqrt{2^d \pi^d t^{d+4}}}e^{- \sum_{k=1}^d \frac{(x^k)^2}{2t}},~~t>0,~~x=(x^1,...,x^d) \in {\mathbb R}^d, ~~k=2,...,d.
\\
\partial_{x^1 x^1}^2p_W(x;t)&= \frac{(x^1)^2-t}{\sqrt{2^d \pi^d t^{d+4}}}e^{- \sum_{k=1}^d \frac{(x^k)^2}{2t}},~~t>0,~~x=(x^1,...,x^d) \in {\mathbb R}^d.
\end{align*}
 Using  (2.1) page 106 of \cite{GarMenal} we obtain the analogous of (2.2) page 107 of \cite{GarMenal}:
there exists a constant $C$ such that 
\begin{align}
\label{maj-garoni-gaus}
|\partial_{x^1x^1 }^2p_W(x;t)| +|\partial_{x^1 x_k}^2p_W(x;t)| \leq \frac{C}{t}
p_W(x;2t),~~k=1,...,d,~~t >0,~~x\in {\mathbb R}^d.
\end{align}
{ Recall \eqref{dens-cas-mb}}
\begin{align*}
p_{W^{*1},W}(b,a;t)=2\frac{2b-a^1}{\sqrt{(2\pi)^d t^{d+2}}}e^{-\frac{(2b-a^1)^2}{2t} -\sum_{k=2}^d \frac{(a^k)^2}{2t}}{\mathbf 1}_{b\geq a^1_+},~~\forall (b,a)\in {\mathbb R}^{d+1},~~t >0.
\end{align*}
We observe
\begin{align}
  \label{pw-densgder}
p_{W^{*1},W}(b,a;t)=-2 \partial_{x^1}p_W(2b-a^1,a^2,...,a^d;t){\mathbf 1}_{b\geq a^1_+}.
\end{align}
Then $p_{W^{*1},W}(.,.;t)$ is  the restriction to $\overline{\Delta}$  of a $C^{\infty} $ function.
\\
Moreover, using the chain rule, $x$ being $(b,a^1,\cdots,a^d):$
\begin{align} 
 \label{maj-der-dens-1}
|\partial_xp_{W^{*1},W}(b,a;t)|\leq \frac{4C}{t} p_W(2b-a^1,a^2,...,a^d;2t){\mathbf 1}_{b\geq a^1_+}.
\end{align}
On the set $\{(b,a),~~b{ >}\max(0,a^1)\}$ we have
\begin{align}
\label{min-2b-a}
(2b-a^1)^2=(b+b-a^1)^2 \geq (b)^2+ (b-a^1)^2.
\end{align}
Plugging estimation \eqref{min-2b-a} into \eqref{maj-der-dens-1} yields \eqref{appendix-1} with $D=2^3 C.$
\\
(ii) Recalling the definition
$$p_0(m,x;t,x_0)=
 p_{W^{1*},W}(m-x_0^1,x-x_0;t)= 
 2\frac{m-x^1+m-x_0^1}{\sqrt{(2\pi)^d t^{d+2}}}
 e^{-\frac{(m-x^1+m-x_0^1)^2}{2t}-
 \frac{|\tilde x- \tilde x_0\|^2}{2t}}
 {\mathbf 1}_{m\geq x^1\vee x_0^1}$$
we deduce the standard bound which uses
 $xe^{-x^2}\leq e^{-x^2/2}$ and $(m-x^1+m-x_0^1)^2\geq (m-x^1)^2+(m-x_0^1)^2$:
\begin{align*}
p_0(m,x;t,x_0) &\leq \frac{e^{-\frac{(m-x^1)^2}{4t}-\frac{\|\tilde{x}-\tilde{x}_0\|^2}{4t} {- \frac{(m-x_0^1)^2}{4t} }}}{\sqrt{(2 \pi )^{d+1}t^{d+1}}}
{\mathbf 1}_{m>x^1\vee x_0^1}
\\
&=2^{(d+1)/2} \phi_{d+1}(m-x^1,m-x_0^1,\tilde x-\tilde x_0;2t){\mathbf 1}_{m>x^1\vee x_0^1},
\end{align*}
\end{proof}


%
%
%


 \begin{lem}
  \label{lem-convol-gaussien}
For all $0<s<t,$ $k\geq 1$ and all $u,v,w\in \mathbb{R}^k$
 \begin{align*}
 &(i)~~ \frac{\|u-v\|^2}{t-s}+\frac{\|v-w\|^2}{s}= \frac{t}{s(t-s)} \left\|v-\left(\frac{s}{t}u+\frac{t-s}{t} w \right)\right\|^2 + \frac{\|u-w\|^2}{t}
;
\\
&(i')~~k=1,~~\int_{-\infty}^b 
\frac{e^{-\frac{(u-v)^2}{4(t-s)}}}{\sqrt{2\pi(t-s)}}
\frac{e^{-\frac{(w-v)^2}{4s}}}{\sqrt{2\pi s}}dv=
\sqrt 2\frac{ e^{-\frac{(u-w)^2}{4t}}}{\sqrt{2\pi t)}}\Phi_G\left(\sqrt{\frac{t}{s(t-s)}}[b-(\frac{s}{t}u+\frac{t-s}{t} w)]\right)
\\
&(ii)~~\int_{\mathbb{R}^k} \frac{e^{-\frac{\|u-v\|^2}{4(t-s)}}}{\sqrt{(2\pi(t-s))^k}}\frac{e^{-\frac{\|w-v\|^2}{4s}}}{\sqrt{(2\pi s)^k}}dv= 2^{k/2}\frac{ e^{-\frac{\|u-w\|^2}{4t}}}{\sqrt{(2\pi t)^k}}
\\
&(iii) ~~\mbox{~~For~~}  u>0,~1-\Phi_G(u):= \int^{+\infty}_u \frac{e^{-\frac{z^2}{2~}}}{\sqrt{2\pi}}dz =\Phi_G(-u)\leq \frac{e^{- \frac{u^2}{2}}}{2}.
\end{align*}
\end{lem}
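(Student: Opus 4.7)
\textbf{Proof plan for Lemma \ref{lem-convol-gaussien}.}

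The identity (i) is purely algebraic; everything else in the lemma is a direct consequence of it. My plan is to prove (i) by completing the square, then deduce (ii) and (i') as Gaussian integrals of the resulting single-peaked quadratic form, and finally to prove (iii) by a self-contained shift argument.

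For (i), I will expand both quadratic forms in $v$, collect terms, and observe that the quadratic coefficient is $\tfrac{1}{t-s}+\tfrac{1}{s}=\tfrac{t}{s(t-s)}$ and the linear part is $-\tfrac{2}{s(t-s)}\langle su+(t-s)w,\,v\rangle$. Completing the square in $v$ then immediately produces the term $\tfrac{t}{s(t-s)}\bigl\|v-\bigl(\tfrac{s}{t}u+\tfrac{t-s}{t}w\bigr)\bigr\|^2$; the remaining $v$-free residue equals $\tfrac{\|u\|^2}{t-s}+\tfrac{\|w\|^2}{s}-\tfrac{\|su+(t-s)w\|^2}{st(t-s)}$, which after putting over the common denominator $st(t-s)$ telescopes to $\tfrac{s(t-s)\|u-w\|^2}{st(t-s)}=\tfrac{\|u-w\|^2}{t}$. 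This gives (i); the only bookkeeping step is the cancellation in that last numerator, which I expect to be the main (mild) obstacle.

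For (ii) and (i'), I will divide (i) by $4$ to rewrite the sum of the two exponents as $-\tfrac{t}{4s(t-s)}\|v-m\|^{2}-\tfrac{\|u-w\|^2}{4t}$, where $m:=\tfrac{s}{t}u+\tfrac{t-s}{t}w$. The factor $e^{-\|u-w\|^2/(4t)}$ comes out of the integral, and what remains is a Gaussian density with covariance $\tfrac{2s(t-s)}{t}I_k$. For (ii), integrating over $\mathbb{R}^k$ gives $\bigl(\tfrac{4\pi s(t-s)}{t}\bigr)^{k/2}$; multiplying by the two prefactors $(2\pi(t-s))^{-k/2}$ and $(2\pi s)^{-k/2}$ and simplifying yields exactly $\tfrac{2^{k/2}}{(2\pi t)^{k/2}}e^{-\|u-w\|^2/(4t)}$. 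For (i'), the same change of variable $\xi=(v-m)\sqrt{t/(2s(t-s))}$ applies, but the upper limit is now $(b-m)\sqrt{t/(2s(t-s))}$; the integral becomes $\sqrt{2\pi}\,\Phi_G\!\left((b-m)\sqrt{t/(2s(t-s))}\right)$ multiplied by the Jacobian $\sqrt{2s(t-s)/t}$, which combined with the prefactors produces the announced formula.

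For (iii), I will use the shift $z=u+\sigma$, writing
\[
\Phi_G(-u)=\int_u^{\infty}\frac{e^{-z^2/2}}{\sqrt{2\pi}}\,dz
=e^{-u^2/2}\int_0^{\infty}\frac{e^{-u\sigma-\sigma^2/2}}{\sqrt{2\pi}}\,d\sigma.
\]
Since $u>0$ implies $e^{-u\sigma}\le 1$ for $\sigma\ge 0$, the remaining integral is bounded by $\int_0^{\infty}\tfrac{e^{-\sigma^2/2}}{\sqrt{2\pi}}\,d\sigma=\tfrac{1}{2}$, which gives the claimed bound $\Phi_G(-u)\le \tfrac12 e^{-u^2/2}$. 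No serious obstacle arises; the lemma is essentially bookkeeping around the elementary identity (i).
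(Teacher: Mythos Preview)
Your proposal is correct. For (i), (i') and (ii) your approach---complete the square in $v$, factor out $e^{-\|u-w\|^2/(4t)}$, and reduce to a Gaussian integral---is exactly the paper's approach (the paper phrases (i) as ``expand both sides and identify coefficients,'' which is the same computation).

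The one genuine difference is (iii). The paper proves it by studying the function $g(u)=\Phi_G(u)-\tfrac12 e^{-u^2/2}$ on $(-\infty,0]$: it vanishes at $0$ and at $-\infty$, its derivative $-\tfrac{1}{\sqrt{2\pi}}e^{-u^2/2}+\tfrac{u}{2}e^{-u^2/2}$ is negative on $(-\infty,\sqrt{2/\pi}\,]$, hence $g\le 0$ for $u\le 0$. Your shift argument $z=u+\sigma$ is cleaner and more direct: it immediately isolates the factor $e^{-u^2/2}$ and bounds the remaining integral by $\int_0^\infty \tfrac{e^{-\sigma^2/2}}{\sqrt{2\pi}}\,d\sigma=\tfrac12$ using only $e^{-u\sigma}\le 1$. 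Both are short; yours avoids the derivative analysis entirely. (A minor remark: in (i') your change of variable correctly gives the upper limit $(b-m)\sqrt{t/(2s(t-s))}$, with a $2$ under the square root; this matches how the lemma is actually used throughout Section~\ref{regul}, even though the displayed statement of (i') omits that $2$.)
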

\proof
Point (i) is proved by a development of both hands 
then an identification of the coefficients of the squared norms and scalar products:
$\|u\|^2,\|v\|^2,\|w\|^2,u.v,u.w,v.w.$
\\
So we deduce (i') as the integral of
$$\frac{e^{-\frac{t}{4s(t-s)} \left(v-(\frac{s}{t}u+\frac{t-s}{t} w )\right)^2 - \frac{(u-w)^2}{4t}}}{\sqrt{2\pi(t-s)}\sqrt{2\pi s}}
$$
with respect to $v$ up to $b$.

(ii) is a consequence of point (i) then an integration  on $\mathbb{R}^k$ of the  { Gaussian density with respect to  $dv$.}

  (iii) { The function $u \mapsto \Phi_G(u) -\frac{e^{-\frac{u^2}{2}}}{2}$ is null at $0,$ has a null limit when $u$ goes to $ -\infty$ and its derivative is $u\mapsto -\frac{e^{-\frac{u^2}{2}}}{\sqrt{2\pi}} +u\frac{e^{-\frac{u^2}{2}}}{2}.$  Its derivative vanishes at $ \sqrt{2/\pi}$ and is negative for $u \leq \sqrt{2/\pi}$ and positive after. Then, $u \mapsto \Phi_G(u) -\frac{e^{-\frac{u^2}{2}}}{2}$ is negative for $u \leq 0.$}

\hfill$\Box$

\subsection{Proof of Remark \ref{rem-wpde}, boundary conditions of the PDE}
\label{ap3}
Here we assume that $p_V$ is regular enough.
Let $\mu_0(dx)= f_0(x)dx.$ Using Theorem  
\ref{thme-sous-hyp-dens}, 
 \eqref{PDEweakm} { means that:}
{ for all $F\in C^2_b({\mathbb R}^{d+1},{\mathbb R}^d)$}
%
%
{\footnotesize
\begin{eqnarray}
\label{depart}
&&{\int_{\bar\Delta}} F(m,x)p_V(m,x;t)dmdx =\int_{\mathbb{R}^d} F(m,m,\tilde x)f_0(m,\tilde x)dmd\tilde x+
\\
&&\int_0^t{\int_{\bar\Delta}}{\cal L}F(m,x) p_V(m,x;s)dmdxds
+{ \frac{1}{2}}
\int_0^t\int_{\mathbb{R}^d} {\|A^1(m,\tilde x)\|^2}\partial_mF(m,m,\tilde x ){ p_V(m,m,\tilde x;s)}dmd\tilde xds \nonumber
\end{eqnarray}
}
{ recalling $\LL=
 B^i\partial_{x_i}+\demi \Sigma^{ij}\partial^2_{x_i,x_j}$ where $\Sigma=AA^t$.} 

(i) { Integrating by parts with respect to a convenient $dx_k$ in 
\\
$\int_0^t{\int_{\bar\Delta}}{\cal L}F(m,x) p_V(m,x;s)dmdxds$
and noting that the support of $p_V(.,.;s)$ is $\bar\Delta, $ the boundary terms uniquely concern the component {$x^1$}:
\begin{align*}
&\int_0^t{\int_{\bar\Delta}}{\cal L}F(m,x)p_V(m,x;s)dmdxds=
-\int_0^t{ \int_{\bar\Delta}} F(m,x)\partial_{x^k}(B^kp_V)(m,x;s)dmdxds
\\
&-\demi \int_0^t{\int_{\bar\Delta}}
\partial_{x^l} F(m,x)\partial_{x^k}[
\Sigma^{{k,l}}(m,x) p_V(m,x;s)]dmdxds
\\
&
+\int_0^t \int_{{\mathbb R}^d}\left(F(m,m,\tilde x)B^1(m,\tilde x)+\demi
\partial_{x^k}F(m,m,\tilde x)\Sigma^{1,k}(m,\tilde x)\right)
p_V(m,m,\tilde x;s) dmd\tilde x ds.
\end{align*}
We again operate  an integration by parts on the second term above on the right hand:
{\footnotesize
\begin{align*}
&-\demi \int_0^t{\int_{\bar\Delta}}
\partial_{x^l} F(m,x)\partial_{x^k}
\Sigma^{{k,l}}(m,x) p_V(m,x;s)]dmdxds=
\\
&\demi \int_0^t{\int_{\bar\Delta}}
 F(m,x)\partial^2_{x^k,x^l}[\Sigma^{k,l} p_V](m,x;s)dmdxds
 { -\demi \int_0^t\int_{{\mathbb R}^d} F(m,m,\tilde{x}) \partial_{x^k} \left[ \Sigma^{1,k} p_V\right](m,m,\tilde{x};s) dm d\tilde{x} ds}.
 \end{align*}
 }
Gathering these equalities yields
\begin{eqnarray}
\label{calL*}
&&\int_0^t{\int_{\bar\Delta}}{\cal L}F(m,x)p_V(m,x;s)dmdxds
=
\int_0^t{\int_{\bar\Delta}} F(m,x)\LL^* p_V(m,x;s)dmdxds \nonumber
\\
&& { -\demi \int_0^t\int_{{\mathbb R}^d} F(m,m,\tilde{x}) \partial_{x^k} \left[ \Sigma^{1,k} p_V\right](m,m,\tilde{x};s) dm d\tilde{x} ds}
\\
&&+\int_0^t  \int_{\mathbb{R}^d}
\left(F(m,m,\tilde x)B^1(m,\tilde x)
p_V(m,m,\tilde x;s) +\demi[\partial_{x^k}
F\Sigma^{1,k} p_V](m,m,\tilde x;s)
\right) dmd\tilde{x} ds.\nonumber
\end{eqnarray}

{(ii)
Using  $F\in C^2_b({\mathbb R}^{d+1},{\mathbb R})$ with compact   support in $\Delta$ (so $F(m,m,\tilde x)=0$)  we deduce the equality in $\Delta:$
\begin{equation}
\label{EDPouvert}
\partial_tp_V(m,x;s)=\LL^* p_V(m,x;s),~~{ \forall s >0,~~(m,x) \in \Delta}.
\end{equation}
{ We use \eqref{depart}, \eqref{calL*} and  \eqref{EDPouvert} applied to 
$F\in C^2_b({\mathbb R}^{d+1},{\mathbb R})$ with compact   support in $\bar\Delta$:}
}
\begin{align}
\label{edpw1}
0&=
 \int_{\mathbb{R}^d} F(m,m,\tilde x)f_0(m,\tilde x)dmd\tilde x { -\demi \int_0^t\int_{{\mathbb R}^d} F(m,m,\tilde{x}) \partial_{x^k} \left[ \Sigma^{1,k} p_V\right](m,m,\tilde{x};s) dm d\tilde{x} ds}\nonumber
 \\
 & +\int_0^t \int_{{\mathbb R}^d}
 \left(F(m,m,\tilde x)B^1(m,\tilde x)
p_V(m,m,\tilde x;s) +\demi
 [\partial_{x^k}F\Sigma^{1,k} p_V](m,m,\tilde x;s)\right) 
 dmd\tilde{x} ds\nonumber
 \\
 & +{ \frac{1}{2}}
\int_0^t\int_{\mathbb{R}^d} {\|A^1(m,\tilde x)\|^2}\partial_mF(m,m,\tilde x ){ p(m,m,\tilde x;s)}dmd\tilde xds.
\end{align}
 We now operate integration by parts on the
last two terms:
{\footnotesize
\begin{align}
\label{edpw2}
& \int_0^t\int_{\mathbb{R}^d} \left([\partial_{x^k}F.\Sigma^{1,k}. p_V](m,m,\tilde x;s) +
{\|A^1(m,\tilde x)\|^2}\partial_mF(m,m,\tilde x ){ p_V(m,m,\tilde x;s)}\right)dmd\tilde xds=
\nonumber
\\
&- \int_0^t\int_{\mathbb{R}^d}\left([ F.\partial_{x^k}
(\Sigma^{1,k} p_V)](m,m,\tilde x;s)
+\partial_m
(\|A^1\|^2 p_V)(m,m,\tilde x;s)\right)
dmd\tilde xds
\end{align}
}
Plugging \eqref{edpw2}  into \eqref{edpw1} yields  the boundary condition, namely a PDE of which {$p_V$} is a solution in the weak sense:
\begin{eqnarray*}
&B^1(m,\tilde x) p_V(m,m,\tilde x;s)=\demi\sum_{k\geq 1}\partial_{x^k}(\Sigma^{1,k} p_V)(m,m,\tilde x;s){\mathbf +}
\\
&\demi { \sum_{k\geq 1}} \partial_{x^k}
(\Sigma^{1,k} p_V)(m,m,\tilde x;s)+\demi {{\partial_{m}(\|A^1\|^2}{ p_V})}(m,m,\tilde x;s)
\end{eqnarray*}
simplified as
$$
B^1(m,\tilde x) p_V(m,m,\tilde x;s)= { \sum_{k\geq 1}} \partial_{x^k}(\Sigma^{1,k} p_V)(m,m,\tilde x;s)+\demi {{\partial_{m}(\|A^1\|^2} p_V)}(m,m,\tilde x;s)
$$
with the initial condition $p_V(m,m,\tilde x;0)
=f_0(m,\tilde x).$
}
\hfill$\Box$

\end{appendix}

\section*{Acknowledgements}
The authors would like to thank Monique JEANBLANC  who gave us a valuable help
in writing this paper. 
The authors would like to thank also the anonymous referees for their constructive comments that improved the
quality of this paper.

\end{document}